\pdfoutput=1

\documentclass{siamart0216}

\usepackage{fullpage}
\usepackage{amsmath,amssymb,amsfonts,mathrsfs}
\usepackage[titletoc,toc,title]{appendix}

\usepackage{array} 
\usepackage[utf8]{inputenc}
\usepackage{listings}
\usepackage{mathtools}
\usepackage{pdfpages}
\usepackage[textsize=footnotesize,color=green]{todonotes}
\usepackage{bm}
\usepackage[normalem]{ulem}
\usepackage{hhline}

\usepackage{algorithm}
\usepackage[noend]{algpseudocode}
\usepackage{algorithmicx}
\algblock{ParFor}{EndParFor}
\algnewcommand\algorithmicparfor{\textbf{parfor}}
\algnewcommand\algorithmicpardo{\textbf{do}}
\algnewcommand\algorithmicendparfor{\textbf{end\ parfor}}
\algrenewtext{ParFor}[1]{\algorithmicparfor\ #1\ \algorithmicpardo}
\algrenewtext{EndParFor}{\algorithmicendparfor}

\usepackage{graphicx}
\usepackage{subfig}
\usepackage{color}


\usepackage{pgfplots}
\usepackage{pgfplotstable}
\definecolor{markercolor}{RGB}{124.9, 255, 160.65}
\pgfplotsset{width=10cm,compat=1.3}
\pgfplotsset{
tick label style={font=\small},
label style={font=\small},
legend style={font=\small}
}

\usetikzlibrary{calc}









\newcommand{\td}[2]{\frac{{\rm d}#1}{{\rm d}{\rm #2}}}
\newcommand{\pd}[2]{\frac{\partial#1}{\partial#2}}

\newcommand{\mbb}[1]{\mathbb{#1}}

\newcommand{\nor}[1]{\left\| #1 \right\|}

\newcommand{\LRp}[1]{\left( #1 \right)}
\newcommand{\LRs}[1]{\left[ #1 \right]}

\newcommand{\LRb}[1]{\left| #1 \right|}
\newcommand{\LRc}[1]{\left\{ #1 \right\}}

\newcommand{\Grad} {\ensuremath{\nabla}}
\newcommand{\Div} {\ensuremath{\nabla\cdot}}

\newcommand{\jump}[1] {\ensuremath{\LRs{\![#1]\!}}}
\newcommand{\avg}[1] {\ensuremath{\LRc{\!\{#1\}\!}}}

\newcommand{\eval}[2][\right]{\relax
  \ifx#1\right\relax \left.\fi#2#1\rvert}

\newcolumntype{C}[1]{>{\centering\let\newline\\\arraybackslash\hspace{0pt}}m{#1}}

\newcommand*\diff[1]{\mathop{}\!{\mathrm{d}#1}}

\makeatletter
\renewcommand\d[1]{\mspace{6mu}\mathrm{d}#1\@ifnextchar\d{\mspace{-3mu}}{}}
\makeatother

\author{Jesse Chan\thanks{Department of Mathematics, Virginia Tech, 225 Stanger Street, Blacksburg, VA 24061-0123} \and T. Warburton\footnotemark[1]}

\date{}

\title{GPU-accelerated Bernstein-Bezier discontinuous Galerkin methods for wave problems}

\begin{document}

\maketitle
\begin{abstract}
We evaluate the computational performance of the Bernstein-Bezier basis for discontinuous Galerkin (DG) discretizations and show how to exploit properties of derivative and lift operators specific to Bernstein polynomials for an optimal complexity quadrature-free evaluation of the DG formulation.  Issues of efficiency and numerical stability are discussed in the context of a model wave propagation problem.  We compare the performance of Bernstein-Bezier kernels to both a straightforward and a block-partitioned implementation of nodal DG kernels in a time-explicit GPU-accelerated DG solver.  Computational experiments confirm the advantage of Bernstein-Bezier DG kernels over both straightforward and block-partitioned nodal DG kernels at high orders of approximation.  
\end{abstract}

\section{Introduction}

%
%

In early work, Kl\"{o}ckner et al. \cite{klockner2009nodal} described how to implement high-order Lagrange discontinuous Galerkin (DG) finite element discretizations \cite{hesthaven2007nodal} of Maxwell's equations on graphics processing units (GPUs). 
In this paper, we examine how the choice of high-order Lagrange elements necessitates operations with block dense matrices that create computational bottlenecks, limiting computational throughput on GPUs. We address this issue by investigating on the one hand block-partitioned implementations of kernels involving dense matrix multiplication and on the other hand using  a Bernstein-Bezier basis, that induces blocked discretization matrices with sparse non-zero blocks.  The former is motivated by GPU memory latency \cite{fatahalian2004understanding, volkov2008benchmarking} and strategies adopted by optimized linear algebra libraries \cite{nvidia-cublas}, while the latter is motivated by recent work on efficient algorithms for high order finite element discretizations.  


Bernstein polynomials have been known for over a century (see \cite{farouki2012bernstein} for a survey and retrospective), and are widely used in graphics rendering and computer aided design.  
However, the study of Bernstein-Bezier polynomials has only recently become an active area of research for high-order finite element discretizations.  Both Ainsworth et al.\ \cite{ainsworth2011bernstein,ainsworth2014pyramid} and Kirby and Thinh \cite{kirby2011fast,kirby2012fast} utilized properties of Bernstein-Bezier polynomials to reduce the operational complexity of stiffness matrix assembly and evaluation of stiffness matrix-vector products for high-order continuous finite element discretizations.  Kirby later introduced efficient algorithms for adopting Bernstein-Bezier polynomials in discontinuous Galerkin time-domain discretizations of first order symmetric hyperbolic PDEs \cite{kirby2015efficient}.  These algorithms have complexity that is asymptotically optimal in the number of degrees of freedom.  We extend that work by exposing additional intrinsic structure of Bernstein-Bezier derivative and lift operators to enable an asymptotically optimal implementation of the former and a reduced complexity implementation for the latter. 

A number of bases that yield block sparse finite element matrices have been proposed in the literature. Warburton et al.\ \cite{warburton1999basis} observed that the stiffness matrix for $C^0$ basis functions defined using Jacobi polynomials with a specific choice of parameters yielded very sparse and banded blocks in the discretization of the Laplacian. Similarly, Beuchler et al.\ \cite{beuchler2006new, beuchler2012sparsity} proposed using integrated Jacobi polynomials to construct topological basis functions that yield sparse and nearly diagonal stiffness matrices on simplicial meshes.  These bases rely on orthogonality properties to sparsify global discretization matrices; in contrast, we leverage properties of Bernstein-Bezier basis functions to simultaneously sparsify elemental derivative operators and factorize elemental lift operators to enable GPU friendly implementations.  

The paper is structured as follows - we review the formulation of DG methods for the first order wave equation in Section~\ref{sec:formulation}.  In Section~\ref{sec:implementation}, we describe the discrete formulation, including discussions of polynomial bases and algorithms for GPU architectures.  In Section \ref{sec:results}, we quantify sensitivity of Bernstein-Bezier DG discretizations to finite precision effects and compare the computational performance of different choices of basis and styles of implementation.  Proofs for the properties of different operators under a Bernstein-Bezier basis are included in Appendix~\ref{app:app}.

\section{Variational formulation}
\label{sec:formulation}

We begin with the first order form of the wave equation
\begin{align*}
\frac{1}{\kappa}\pd{p}{\tau}{} + \Div \bm{u} &= f\\
\rho\pd{\bm{u}}{\tau}{} + \Grad p &= 0,
\end{align*}
where $p$ is acoustic pressure, $\bm{u} \in \mbb{R}^3$ is the vector of velocities, and $\rho$ and $\kappa$ are density and bulk modulus, respectively.  These equations hold over the domain $\Omega$ and time $\tau\in[0,T)$.  To simplify presentation, we assume homogeneous forcing $f = 0$ and Dirichlet boundary conditions $p = 0$ on the domain boundary $\partial \Omega$.  We assume also that a triangulation of the domain $\Omega_h = {\cup}_{k=1}^{K} D^k$ with $K$ elements is given, where $D^k$ is a planar tetrahedron with outward facing normal $\bm{n}$, and that $\kappa$ and $\rho$ are constant over each element. We seek a discrete solution $p \in V_h$, where $V_h$ is the space of piecewise polynomials of total degree $N$ over each element $D^k$
\[
V_h = \LRc{p\in L^2(\Omega), \quad \left.p\right|_{D^k} \in P^N(D^k)}, \qquad P^N(D^k) = \LRc{x^iy^jz^k, \quad (x,y,z) \in D^k, \quad i+j+k\leq N}.
\]

We discretize the wave equation in space using a discontinuous Galerkin formulation with upwind fluxes \cite{hesthaven2007nodal}. We define scalar and vector-valued jumps over the faces of $\partial D^k$ 
\begin{align*}
\jump{p} &= p^+ - p^-, \qquad \jump{\bm{u}}_i = \bm{u}_i^+ - \bm{u}_i^-
\end{align*}
where $p^+$ and $p^-$ are the neighboring and local traces of the solution over a face, respectively.  The local semi-discrete form over each element $D^k$ is
\begin{align}
\int_{D^k} \frac{1}{\kappa}\pd{p}{\tau}{}\phi \diff x &= -\int_{D^k} \Div\bm{u}\phi \diff x + \int_{\partial {D^k}} \frac{1}{2}\LRp{\tau_p\jump{p} - \bm{n}^-\cdot \jump{\bm{u}} }\phi \diff x  \\
\int_{D^k} \rho\pd{\bm{u}}{\tau}{}\bm{\psi} \diff x &= - \int_{D^k} \Grad p \cdot  \bm{\psi}^- \diff x + \int_{\partial {D^k}} \frac{1}{2}\LRp{\tau_u \jump{\bm{u}}\cdot \bm{n}^- - \jump{p}}\bm{\psi}^-\cdot \bm{n}^- \diff x,
\end{align}
where $\tau_p = 1/\avg{\rho c}$, $\tau_u = \avg{\rho c}$, and $c^2 = \kappa/\rho$ is the speed of sound.  
The discrete solution is determined by requiring the above to hold for all $v\in V_h$ and all elements $D^k$. 

To impose homogeneous Dirichlet boundary conditions, we enforce the mirror conditions at faces coinciding with the domain boundary
\[
\left.p^+\right|_{\partial \Omega} = -\left.p^-\right|_{\partial \Omega}, \qquad \left.\bm{u}^+\right|_{\partial \Omega} = \left.\bm{u}^-\right|_{\partial \Omega}.
\]
Then, it is straightforward to show that, by choosing $\phi = p$ and $\bm{\psi} = \bm{u}$, that the above formulation is energy stable, such that
\[
\td{}{\tau}\sum_{{D^k} \in \Omega_h} {\int_{D^k} \LRp{ \frac{1}{\kappa} p^2 + \rho \bm{u}^2}}  \leq -\frac{1}{2}\sum_{{D^k} \in \Omega_h} \int_{\partial {D^k}} \LRp{\tau_p\jump{p}^2 + \tau_u\jump{\bm{u}_n}^2}.
\]

\section{Discrete operators and numerical implementation}
\label{sec:implementation}

Assuming that $D^k$ is a planar tetrahedron, each element is an affine mapping of a reference tetrahedron $\widehat{D}$, such that for an elemental mapping $\Phi^k$, 
\[
\LRp{x,y,z} = \Phi^k\LRp{r,s,t}, \qquad \LRp{x,y,z}\in D^k, \qquad \LRp{r,s,t} \in \widehat{D} = \LRc{-1\leq r,s,t, \quad r + s + t \leq 1}.  
\]  
Derivatives are then computed through the chain rule; introducing the matrix of geometric factors $G^k$, the physical gradient $\Grad p$ on $D^k$ is computed as
\[
G^k = \begin{bmatrix}
r_x & s_x & t_x\\
r_y & s_y & t_y\\
r_z & s_z & t_z
\end{bmatrix}, 
\qquad \Grad p = G^k \widehat{\Grad}p,
\]
where $\widehat{\Grad}p$ is the derivative of $p$ with respect to the reference coordinates.  

Assuming a polynomial basis $\LRc{\phi_i}_{i=1}^{N_p}$, we define the reference mass matrix $\bm{M}$ and face mass matrix $\bm{M}^f$ as
\[
\bm{M}_{ij} = \int_{\widehat{D}}{ \phi_j \phi_i}, \qquad \bm{M}^f_{ij} = \int_{f_{\widehat{D}}}  \phi_j \phi_i.
\]
where $f_{\widehat{D}}$ is a face of the reference element $\widehat{D}$.  The reference derivative matrices $\bm{D}^r,\bm{D}^s,\bm{D}^t$ are defined as the linear operators such that
\[
\sum_{j=1}^{N_p}\LRp{\bm{D}^r \bm{p}}_j\phi_j = \pd{p}{r}{}, \qquad \sum_{j=1}^{N_p}\LRp{\bm{D}^s \bm{p}}_j\phi_j = \pd{p}{s}{}, \qquad \sum_{j=1}^{N_p}\LRp{\bm{D}^t \bm{p}}_j\phi_j = \pd{p}{t}{}, \qquad p = \sum_{j=1}^{N_p} \bm{p}_j \phi_j.
\]
where $\bm{p}$ is the vector of degrees of freedom representing a function $p \in V_h$.  

We introduce the determinant of the volume Jacobian $J^k$ and determinant of the face Jacobian $J^f$ as additional geometric factors.  
By exploiting the fact that planar simplices are affinely related, and that geometric factors are constant over each element, the DG formulation may be expressed locally as
\begin{align*}
\frac{1}{\kappa}J^k\bm{M}\td{\bm{p}}{t} &= J^k\bm{M} \sum_{i = 1}^3 \LRp{G^k_{i0} \bm{D}^r + G^k_{i1} \bm{D}^r + G^k_{i2} \bm{D}^t} \bm{U}_j + \sum_{f=1}^{N_{\text{faces}}} J^f\bm{M}^f F_p(\bm{p}^-,\bm{p}^+,\bm{U}^-,\bm{U}^+),\\
\rho J^k\bm{M}\td{\bm{U}_i}{t} &= J^k\bm{M} \LRp{G^k_{i0} \bm{D}^r + G^k_{i1} \bm{D}^r + G^k_{i2} \bm{D}^t}  \bm{p} + \sum_{f=1}^{N_{\text{faces}}} J^f \bm{n}_i\bm{M}^f  F_{u}(\bm{p}^-,\bm{p}^+,\bm{U}^-,\bm{U}^+),
\end{align*}
where $\bm{U}_i$ are degrees of freedom for $\bm{u}_i$, $\bm{p}^-, \bm{p}^+$ are degrees of freedom for $p$ on $D^k$ and its neighbor across a face $f$, respectively, and $F_p,F_u$ are defined such that 
\begin{align*}
\LRp{ J^f\bm{M}^f F_p(\bm{p}^-,\bm{p}^+,\bm{U}^-,\bm{U}^+)}_i &= \int_{f_{D^k}} \frac{1}{2}\LRp{\tau_p \jump{p} - \bm{n}^-\cdot\jump{\bm{u}}}\phi_i^-\\ 
\LRp{J^f \bm{n}_i \bm{M}^f F_u(\bm{p}^-,\bm{p}^+,\bm{U}^-,\bm{U}^+)}_i &= \int_{f_{D^k}} \frac{1}{2}\LRp{\tau_u\jump{\bm{u}} \cdot \bm{n}^- - \jump{p}}\bm{\psi}_i^- \bm{n}_i^-.
\end{align*}
Inverting $\bm{M}$ results in the system of ODEs
\begin{align}
\label{eq:discrete_var}
\frac{1}{\kappa}\td{\bm{p}}{t} &= \sum_{i = 1}^3 \LRp{G^k_{i0} \bm{D}^r + G^k_{i1} \bm{D}^r + G^k_{i2} \bm{D}^t}  \bm{U}_j + \sum_{f=1}^{N_{\text{faces}}} \frac{J^f}{J^k}\bm{L}^f F_p(\bm{p}^-,\bm{p}^+,\bm{U}^-,\bm{U}^+),\\
\rho\td{\bm{U}_i}{t} &=\LRp{G^k_{i0} \bm{D}^r + G^k_{i1} \bm{D}^r + G^k_{i2} \bm{D}^t}  \bm{p} + \sum_{f=1}^{N_{\text{faces}}} \frac{J^f}{J^k}n_i\bm{L}^f F_u(\bm{p}^-,\bm{p}^+,\bm{U}^-,\bm{U}^+), \qquad i = 0,\ldots,d.\notag
\end{align}
where $\bm{L}^f$ is the lift operator for a face $f$
\[
\bm{L}^f = \bm{M}^{-1}\bm{M}^f.
\]
By using the ``strong'' DG formulation, the mass matrix is factored out of the volume terms and removed after multiplying with the inverse mass matrix on both sides.  This leaves only the application of a derivative operator, and decreases the number of matrix multiplications required to evaluate the volume term.  To evaluate the surface term, the inverse of the mass matrix is fused into the face mass matrices to produce the face lift matrices $\bm{L}^f$.  We define $\bm{L}$ as the column-concatenation of these lift operators 
\[
\bm{L} = \LRp{\bm{L}^1 \middle| \ldots \middle| \bm{L}^4}. 
\]
For bases which are rotationally symmetric (including both the Lagrange basis with symmetric point distributions and the Bernstein-Bezier basis), the block-columns of $\bm{L}$ (the lift matrices $\bm{L}^f$ for a face) are identical up to row permuations. 

These derivative and lift operators can be made sparse under and appropriate choice of basis, and the efficient application of such operators motivates the investigation of the Bernstein-Bezier basis as an alternative to nodal bases in the following sections.  



\subsection{Choice of polynomial basis}

In this work, we consider two choices of local bases which span the space of multi-dimensional polynomials of total degree $N$ on the reference bi-unit right tetrahedron with coordinates $(r,s,t)$.  The first basis consists of Lagrange polynomials $\left\{ \ell_{i} \right\}_{i=1}^{i={N_p}}$ associated with a node set $\left\{ r_i, s_i, t_i \right\}_{i=1}^{i={N_p}}$ with dimension 
\[
N_p=\frac{(N+1)(N+2)(N+3)}{6}.
\]
These Lagrange basis functions possess the standard interpolatory Kronecker delta property
\[
\ell_i\left(r_j,s_j,t_j\right) = \delta_{ij}, \qquad 1\leq i,j\leq N_p.
\]
The numerical stability of operations involving a Lagrange basis depends heavily on the choice of nodes.  In this work, we adopt the Warp and Blend nodes \cite{warburton2006explicit}, which are optimized for interpolation quality.  The conditioning of the Vandermonde matrix is also improved under this optimized choice of interpolation nodes.  

By choosing a unisolvent nodal set $\LRp{r_i,s_i,t_i}$ which includes $(N+1)(N+2)/2$ nodes per face and nodes at all vertices, the Lagrange basis may be geometrically decomposed into vertex, edge, face, and interior basis functions.  As DG methods couple neighboring elements together through fluxes, the trace of the solution must be computed on shared faces.  Since these face traces are polynomial, they can be determined by node-to-node connectivity maps and face degrees of freedom under a Lagrange basis.  In comparison, modal (hierarchical and orthogonal) bases require additional computation to evaluate face traces of the solution, or may require orientation-aware procedures to ensure compatibility of traces between neighboring elements. 

The second basis under consideration is the Bernstein-Bezier basis of degree $N$.  On a tetrahedron, Bernstein polynomials are most easily expressed in terms of barycentric coordinates.  For the reference tetrahedron, these coordinates are given in terms of the reference coordinates
\[
\lambda_0 = -\frac{(1+r+s+t)}{2}, \qquad \lambda_1 = \frac{(1+r)}{2}, \qquad \lambda_2  =  \frac{(1+s)}{2}, \qquad \lambda_3  =  \frac{(1+t)}{2}.
\]
The Bernstein polynomials of degree $N$ are then simply scaled monomials in the barycentric coordinates
\[
B^N_{ijkl} = C^N_{ijkl}\lambda^i_0 \lambda^j_1 \lambda^k_2 \lambda^l_3, \qquad C^N_{ijkl} = \frac{N!}{i!j!k!l!}
\]
where the index $l$ is defined to be $l = N-i-j-k$.  The coefficients which define an expansion in terms of Bernstein polynomials are referred to as control points.  

The Bernstein-Bezier basis also admits a geometric decomposition into vertex, edge, face, and interior basis functions.  Additionally, while Bernstein polynomials do not possess the discrete Kronecker delta property of Lagrange polynomials, each basis function attains its maximum at one node in an equispaced lattice on a tetrahedron.  As such, they share many of the convenient properties of Lagrange polynomials as used in nodal DG methods.  For example, nodal discontinuous Galerkin methods determine fluxes using node-to-node connectivity maps and degrees of freedom corresponding to face points on two neighboring elements.  The jumps of polynomial solutions across a face are then determined using the difference between nodal values at face nodes on each element.  Bernstein-Bezier DG implementations may similarly compute the jumps of polynomial solutions using the difference between the Bernstein-Bezier control points on the shared face of two neighboring elements.  The correspondence between Bernstein polynomials and equispaced points on a tetrahedron makes it simple and straightforward to incorporate Bernstein polynomials into a classical nodal DG implementation.  

\subsubsection{Derivative matrices}
\label{sec:dmats}
For both nodal and Bernstein-Bezier bases, derivative matrices have a particular structure.  Figure~\ref{fig:dmats} shows the sparsity pattern of nodal and Bernstein-Bezier derivative matrices, where the Bernstein-Bezier derivative is taken with respect to the barycentric coordinate $\lambda_0$.  Despite the lack of an explicit formula for nodal bases on simplices, the matrix $\bm{D}^r$ for the Lagrange basis shows some sparsity.  This was exploited to generate tuned matrix-multiplication code based on predetermined sparsity patterns by Wozniak et al.\ in \cite{wozniak2014gimmik}.  We take an alternative approach here and explore the Bernstein polynomial basis, which yields sparse and structured derivative operators.  

\begin{figure}
\centering
\subfloat[Nodal derivative matrix ($\bm{D}^r$)]{\includegraphics[width=.3\textwidth]{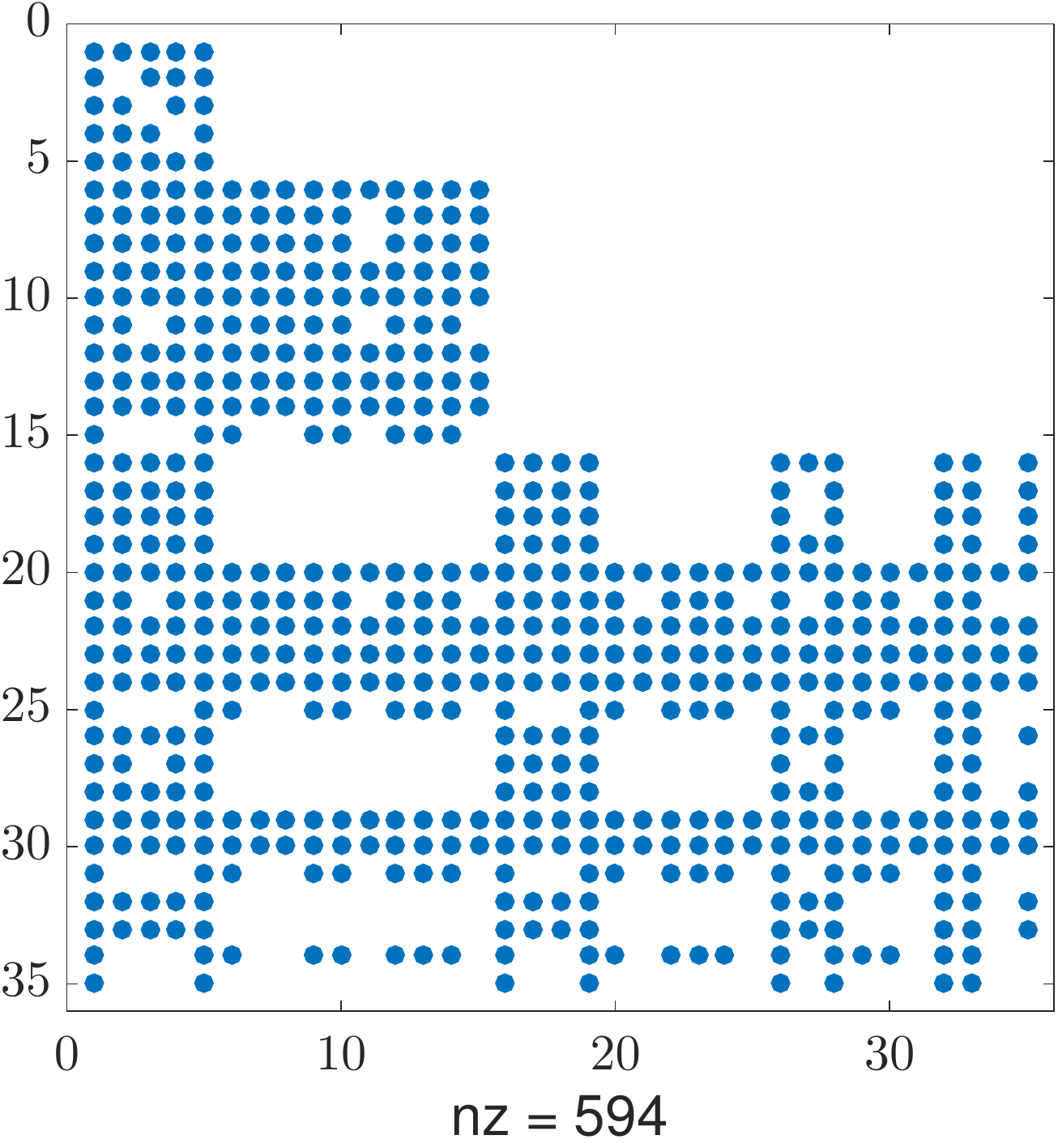}}
\hspace{5em}
\subfloat[Bernstein derivative matrix ($\bm{D}^0$)]{\includegraphics[width=.3\textwidth]{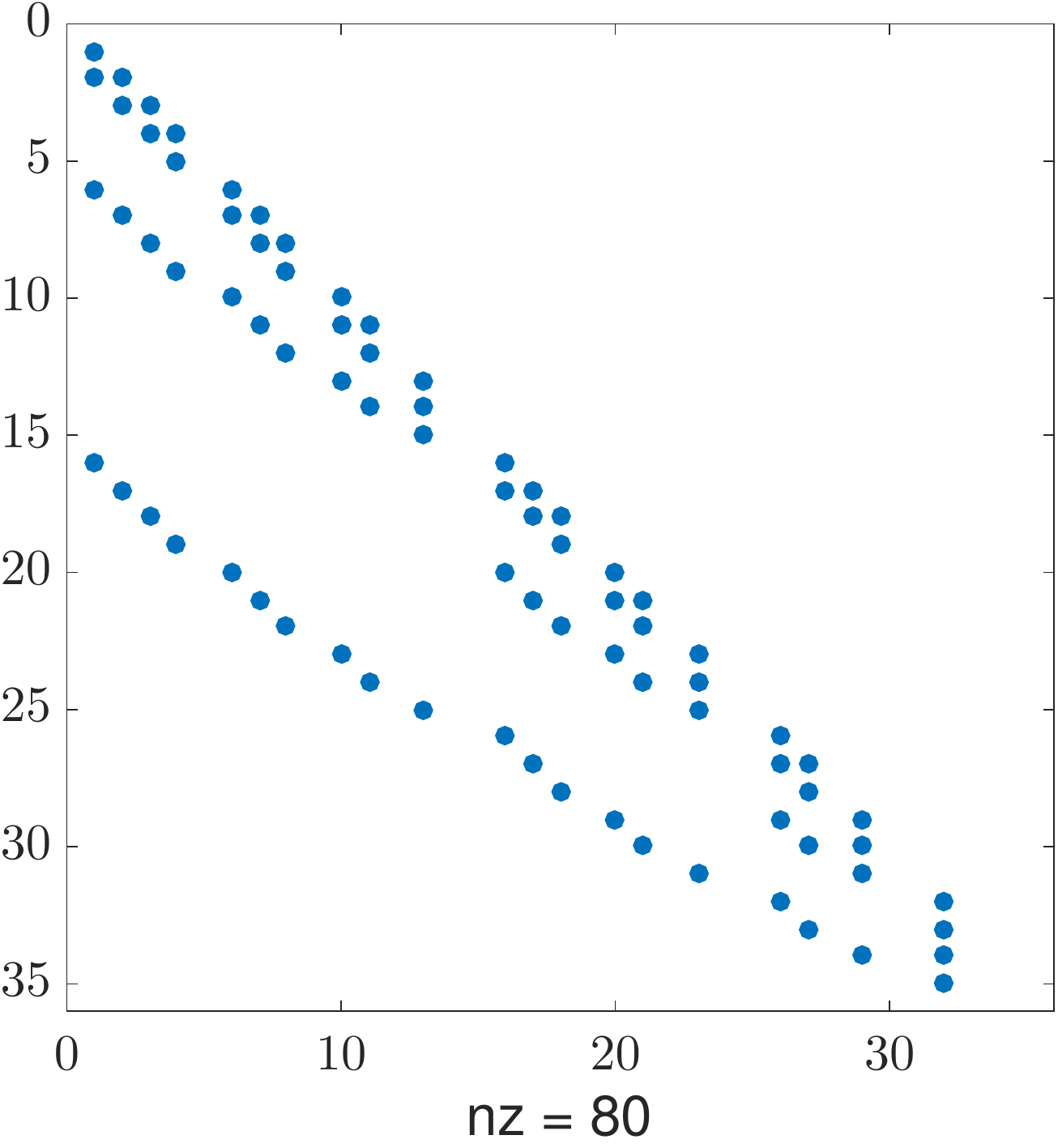}}
\caption{Sparsity patterns of derivative operators using nodal and Bernstein-Bezier bases for $N = 4$.}
\label{fig:dmats}
\end{figure}

A key property of the Bernstein-Bezier basis is the sparsity of the differentiation matrices, inherited from the property that the derivative of a degree $N$ Bernstein polynomial with respect to a barycentric coordinate is representable as a linear combination of no more than $4$ other degree $N$ Bernstein polynomials.\footnote{In general, the derivative of a $d$-dimensional Bernstein polynomial of degree $N$ is representable as a linear combination of at most $(d+1)$ other degree $N$ Bernstein polynomials.}  These combinations are explicitly known, and correspond to the sparsity patterns of differentiation matrices.  In three dimensions, each index corresponds to the tuple of barycentric indices $(i,j,k,l)$.  The sparsity pattern of the derivative matrix with respect to the barycentric coordinate $\lambda_0$ is summarized as follows:  
\begin{lemma}
\label{lemma:lemma1}
The non-zero rows of a given column $(i,j,k,l)$ of $\bm{D}^0$ are
\begin{align*}
\bm{D}^0_{(i,j,k,l),(i,j,k,l)} &= i\\
\bm{D}^0_{(i-1,j+1,k,l),(i,j,k,l)} &= j+1\\
\bm{D}^0_{(i-1,j,k+1,l),(i,j,k,l)} &= k+1\\
\bm{D}^0_{(i-1,j,k,l+1),(i,j,k,l)} &= l+1.
\end{align*}
Similarly, non-zero rows of $\bm{D}^1, \bm{D}^2, \bm{D}^3$ for a column $(i,j,k,l)$ are
\begin{align*}
\bm{D}^1_{(i,j,k,l),(i,j,k,l)} &= j, & \bm{D}^2_{(i,j,k,l),(i,j,k,l)} &= k,  &\bm{D}^3_{(i,j,k,l),(i,j,k,l)} &= l,\\
\bm{D}^1_{(i+1,j-1,k,l),(i,j,k,l)} &= i+1, &\bm{D}^2_{(i+1,j,k-1,l),(i,j,k,l)} &= i+1,  &\bm{D}^3_{(i+1,j,k,l-1),(i,j,k,l)} &= i+1,\\
\bm{D}^1_{(i,j-1,k+1,l),(i,j,k,l)}&= k+1,  &\bm{D}^2_{(i,j+1,k-1,l),(i,j,k,l)} &= j+1,  &\bm{D}^3_{(i,j+1,k,l-1),(i,j,k,l)} &= j+1, \\
\bm{D}^1_{(i,j-1,k,l+1),(i,j,k,l)}&= l+1,  &\bm{D}^2_{(i,j,k+1,l-1),(i,j,k,l)} &= k+1,  &\bm{D}^3_{(i,j,k-1,l+1),(i,j,k,l)} &= l+1.
\end{align*}
The non-zero column indices of $\bm{D}^1,\ldots,\bm{D}^4$ can also be determined explicitly:
\begin{align*}
\bm{D}^0_{(i,j,k,l),(i,j,k,l)} &= i, &\bm{D}^1_{(i,j,k,l),(i,j,k,l)} &= j, & \bm{D}^2_{(i,j,k,l),(i,j,k,l)} &= k,  &\bm{D}^3_{(i,j,k,l),(i,j,k,l)} &= l,\\
\bm{D}^0_{(i,j,k,l),(i+1,j-1,k,l)} &= j, &\bm{D}^1_{(i,j,k,l),(i-1,j+1,k,l)} &= i, &\bm{D}^2_{(i,j,k,l),(i-1,j,k+1,l)} &= i,  &\bm{D}^3_{(i,j,k,l),(i-1,j,k,l+1)} &= i,\\
\bm{D}^0_{(i,j,k,l),(i+1,j,k-1,l)} &= k, &\bm{D}^1_{(i,j,k,l),(i,j+1,k-1,l)}&= k,  &\bm{D}^2_{(i,j,k,l),(i,j-1,k+1,l)} &= j,  &\bm{D}^3_{(i,j,k,l),(i,j-1,k,l+1)} &= j, \\
\bm{D}^0_{(i,j,k,l),(i+1,j,k,l-1)} &= l, &\bm{D}^1_{(i,j,k,l),(i,j+1,k,l-1)}&= l,  &\bm{D}^2_{(i,j,k,l),(i,j,k,l-1)} &= k,  &\bm{D}^3_{(i,j,k,l),(i,j,k-1,l+1)} &= l.
\end{align*}
\end{lemma}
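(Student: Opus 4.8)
The plan is to read off both tables from a single closed-form expansion of the degree-preserving barycentric derivative of one Bernstein polynomial. First I would fix the interpretation of $\bm{D}^m$: since every $p \in P^N(\widehat{D})$ has a Bernstein expansion $p = \sum_\alpha \bm{p}_\alpha B^N_\alpha$ that is a homogeneous degree-$N$ polynomial in $(\lambda_0,\lambda_1,\lambda_2,\lambda_3)$, and since $\lambda_0+\lambda_1+\lambda_2+\lambda_3 = 1$ on $\widehat{D}$, I define $\bm{D}^m$ as the matrix in the Bernstein basis of the operator $\widetilde{\partial}_m = \LRp{\sum_{m'}\lambda_{m'}}\,\partial/\partial\lambda_m$, where $\partial/\partial\lambda_m$ is the ordinary partial of the homogeneous representative. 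The extra factor $\sum_{m'}\lambda_{m'}$ equals $1$ on the simplex, so $\widetilde{\partial}_m$ agrees with $\partial/\partial\lambda_m$ as a function but returns a genuine degree-$N$ polynomial, which is what makes $\bm{D}^m$ square. (Multiplying suitable differences of these operators by $\pm 1/2$ recovers $\bm{D}^r,\bm{D}^s,\bm{D}^t$ through the chain rule, but that connection is not needed for the statement.)

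Next I would establish the two elementary identities that drive everything, using $B^N_{ijkl} = C^N_{ijkl}\lambda_0^i\lambda_1^j\lambda_2^k\lambda_3^l$. Writing $\alpha = (i,j,k,l)$ with $|\alpha| = N$ and letting $e_m$ be the $m$-th unit multi-index, direct differentiation together with $C^N_\alpha\,\alpha_m = N\,C^{N-1}_{\alpha - e_m}$ gives the degree-lowering rule $\partial B^N_\alpha/\partial\lambda_m = N\,B^{N-1}_{\alpha - e_m}$ for $\alpha_m \ge 1$ (and $0$ when $\alpha_m = 0$, with the convention $B^{N-1}_\beta = 0$ whenever $\beta$ has a negative entry). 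Comparing $C^{N-1}_\beta$ with $C^N_{\beta + e_{m'}}$ similarly yields the degree-raising rule $\lambda_{m'} B^{N-1}_\beta = \frac{\beta_{m'}+1}{N}\,B^N_{\beta + e_{m'}}$.

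Then I would simply combine them. For $\bm{D}^0$ take $m = 0$; assuming $i \ge 1$ (the boundary case $i = 0$ gives $\widetilde{\partial}_0 B^N_\alpha = 0$, consistent with the absent rows), expand
\[
\widetilde{\partial}_0 B^N_\alpha = N\LRp{\sum_{m'=0}^3 \lambda_{m'}} B^{N-1}_{\alpha - e_0} = \sum_{m'=0}^3 \bigl((\alpha - e_0)_{m'} + 1\bigr)\, B^N_{\alpha - e_0 + e_{m'}},
\]
and compute the four coefficients: $m'=0$ gives $i\,B^N_{(i,j,k,l)}$, $m'=1$ gives $(j+1)B^N_{(i-1,j+1,k,l)}$, $m'=2$ gives $(k+1)B^N_{(i-1,j,k+1,l)}$, and $m'=3$ gives $(l+1)B^N_{(i-1,j,k,l+1)}$. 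Since $\bm{D}^0_{\beta\alpha}$ is by definition the coefficient of $B^N_\beta$ in $\widetilde{\partial}_0 B^N_\alpha$, reading these four terms down the column $\alpha = (i,j,k,l)$ is exactly the first block of the lemma, and reading the same coefficients across a fixed row $(i,j,k,l)$ (i.e. solving $\beta = \alpha - e_0 + e_{m'}$ for $\alpha$) produces the corresponding column-index block.

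The matrices $\bm{D}^1,\bm{D}^2,\bm{D}^3$ follow verbatim from the same computation with $m = 1,2,3$: the construction is symmetric under permuting the four barycentric indices, so I would just relabel. The only real care needed, which is also the main (minor) obstacle, is bookkeeping at the boundary of the index lattice: whenever a differentiation would generate a negative barycentric index the corresponding term must vanish, and I would confirm this is automatic because $\alpha_m = 0$ forces $\partial B^N_\alpha/\partial\lambda_m = 0$, so no spurious rows or columns appear. Everything else is mechanical coefficient comparison.
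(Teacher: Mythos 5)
Your proposal is correct and follows essentially the same route as the paper's proof in Appendix A.1: the degree-lowering identity $\partial B^N_{\bm{\alpha}}/\partial\lambda_m = N B^{N-1}_{\bm{\alpha}-\bm{e}_m}$ combined with sparse degree elevation to obtain $\partial_{\lambda_i} B^N_{\bm{\alpha}} = \sum_{j}(\alpha_j+1)B^N_{\bm{\alpha}-\bm{e}_i+\bm{e}_j}$, from which the row entries of a column and the column entries of a row are read off. Your multiplication by $\sum_{m'}\lambda_{m'}=1$ is just the paper's degree elevation formula made explicit, and your added attention to the boundary cases $\alpha_m=0$ and to why $\bm{D}^m$ is a well-defined square matrix is a welcome but inessential refinement of the same argument.
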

The proof of the above is derived using multi-index notation in Appendix~\ref{app:bb}, and details of their computational implementation are given in Section~\ref{sec:comp}.  These derivative operators are most conveniently stored as a list of $N_p \times 4$ row indices.  Since the values of $\bm{D}^0,\ldots, \bm{D}^3$ are identical, only one additional list of $N_p \times 4$ values is stored and reused for each barycentric derivative.  

\subsubsection{Lift matrices}
\label{sec:lmat}
For nodal bases, the lift matrix is dense, without any evident underlying structure.  However, for the Bernstein-Bezier basis, the lift matrix $\bm{L}$ displays a rich structure that allows for a more efficient evaluation of its matrix-vector product.  To explain this structure, we begin by recalling several useful facts about the Bernstein-Bezier basis.  

We introduce the two-dimensional degree elevation operator $\bm{E}_{N-i}^{N}$, which expresses polynomials of degree $N-i$ on a triangle as degree $N$ polynomials on a triangle (for $i \leq N$).  For Bernstein polynomials, the process of degree elevation between consecutive degrees is very efficient - a two-dimensional degree elevation operator $\bm{E}_{N-1}^N$ contains at most $3$ entries per row irregardless of the degree $N$ \cite{kirby2015efficient}.\footnote{In general, a $(d-1)$ dimensional degree elevation operator $\bm{E}_{N-1}^N$ contains at most $d$ entries per row.}  Degree elevation between arbitrary degrees is similarly achieved by composing one-degree elevation operators
\[
\bm{E}_{N-i}^N = \bm{E}_{N-1}^N \bm{E}^{N-1}_{N-2}\ldots \bm{E}_{N-i}^{N-i+1}.
\]
We note that degree elevation operators may also be used to restrict a face (two-dimensional) mass matrix of degree $N$ to a face mass matrix of lower degree, i.e.\
\[
\bm{M}^{f}_{N-i,N-j} = \LRp{\bm{E}_{N-i}^N}^T \bm{M}_{N,N}^{f} \bm{E}_{N-j}^N.
\]
For this reason, we refer to the transpose of the degree elevation operator as the degree reduction operator.  

We consider the lift matrix for a single face $\bm{L}^f$.  We observe the following structure: let $N^f_{p}$ and $(N-i)^f_p$ be the dimension of the total degree $N$ and $(N-i)$ polynomial spaces on a triangular face, respectively.  For convenience, we assume that the degrees of freedom are ordered such that the first $N^f_p$ are associated with the face $f$, the next $(N-1)^f_p$ are associated with the layer of equispaced nodes in the direction normal to the face, and so on.  Then, there exist scaling constants $\ell_i$ such that lift matrix displays the structure
\[
\bm{L}^f = \LRp{ \begin{array}{c} \bm{L}_0 \\ \vdots \\ \bm{L}_N \end{array}} = \LRp{ \begin{array}{c} \bm{L}_0 \\ \ell_1 \LRp{\bm{E}_{N-1}^N}^T \bm{L}_0 \\ \vdots \\ \ell_N \LRp{\bm{E}_{0}^N}^T \bm{L}_0 \end{array}}, \qquad \bm{L}_i \in \mbb{R}^{(N-i)^f_p \times N^f_p }, 
\]
where $\ell_1,\ldots, \ell_N$ and $\bm{L}_0$ are given explicitly by
\[
\ell_i = (-1)^i \frac{\binom{N}{i}}{1+i}, \qquad \bm{L}_0 = \frac{(N+1)^2}{2} \LRp{\bm{E}^{N+1}_N}^T \bm{E}^{N+1}_N.
\]
Supporting lemmas and additional properties of the Bernstein-Bezier lift matrix are given in Appendix~\ref{app:lift}.

\begin{figure}
\centering
\subfloat[Bernstein lift matrix]{\includegraphics[height=.17\textheight]{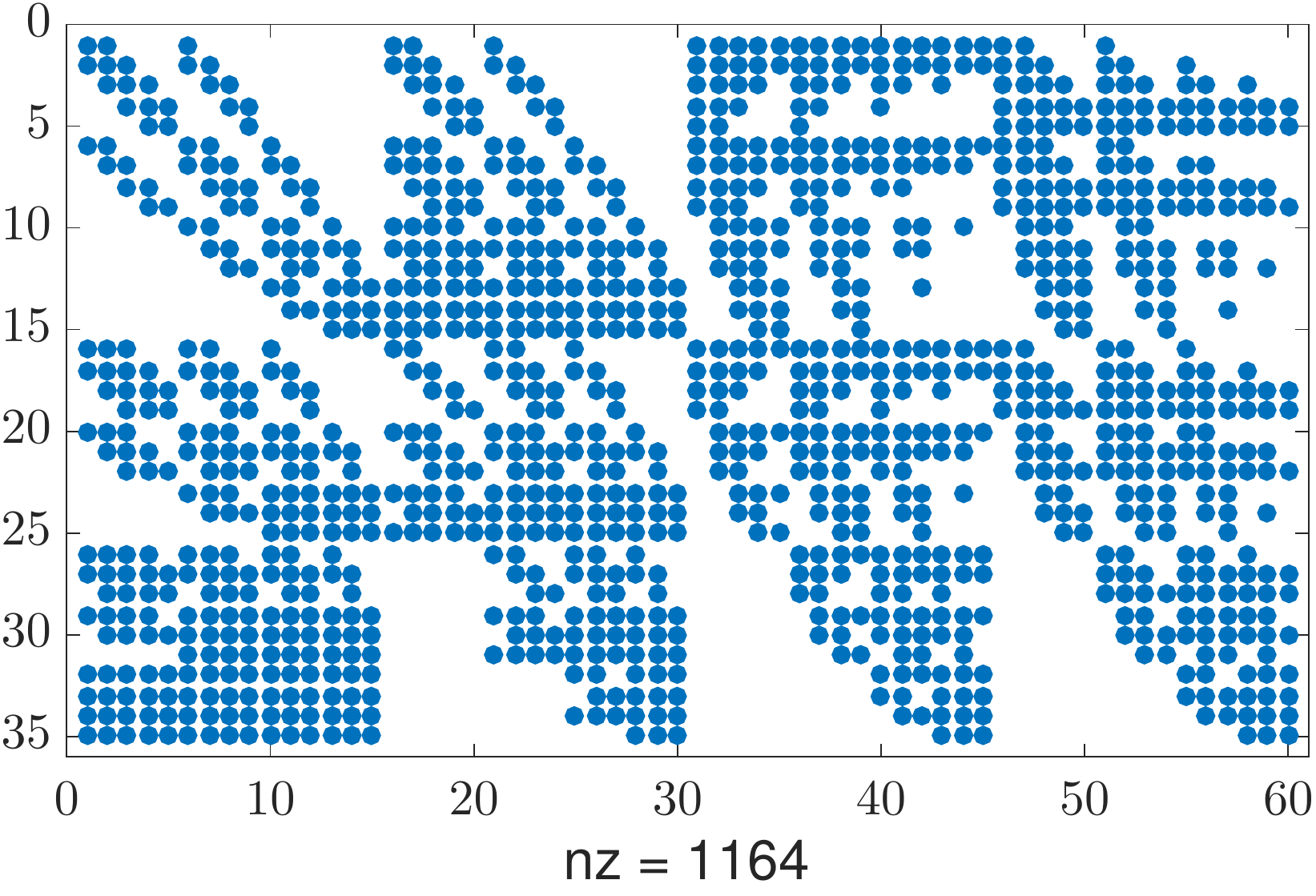}}
\hspace{2em}
\subfloat[$\bm{E}_L$]{\includegraphics[height=.17\textheight]{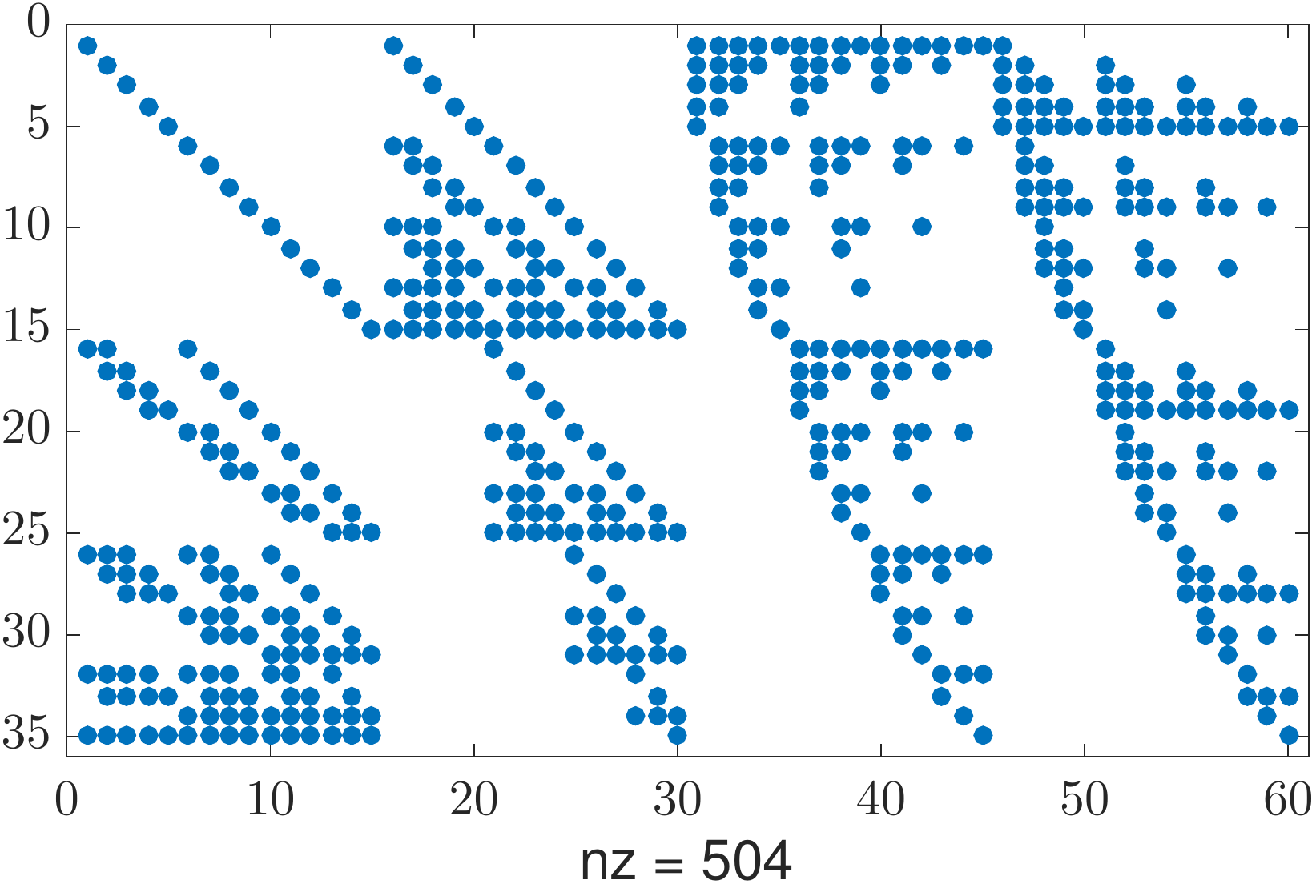}}
\hspace{2.25em}
\subfloat[$\bm{L}_0$]{\includegraphics[height=.17\textheight]{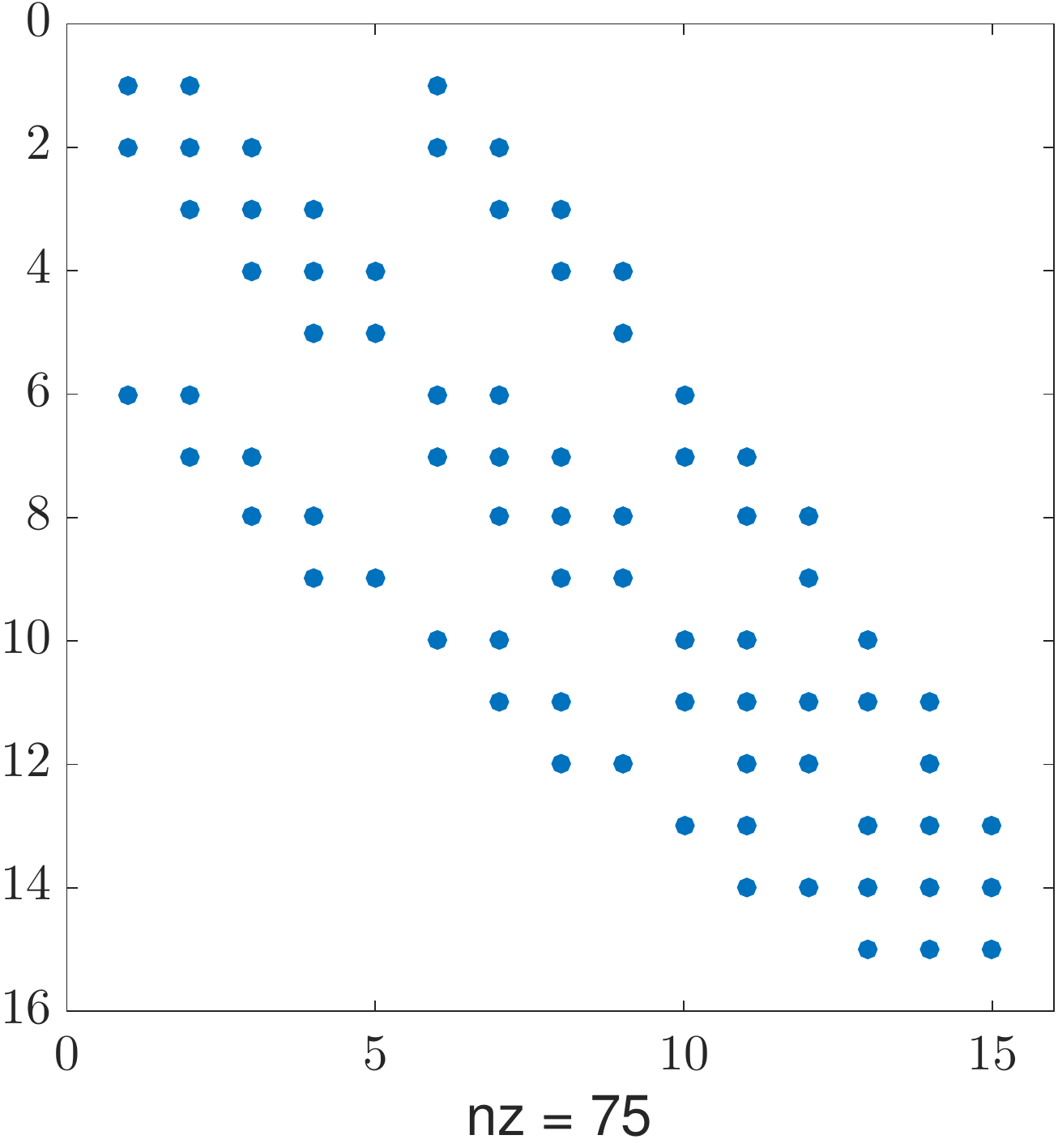}}
\caption{Sparsity patterns of lift operators $\bm{L}$, $\bm{E}_L$, and $\bm{L}_0$ for a Bernstein-Bezier basis of degree $N=4$.}
\label{fig:lmats}
\end{figure}


Let $\bm{p}^f$ be the flux degrees of freedom for a given face.  It is then possible to apply $\bm{L}^f$ to $\bm{p}^f$ using degree reduction and $\bm{L}_0$.  Assuming the degrees of freedom are ordered as described above, we define the face reduction matrix $\bm{E}_L^f$ in terms of degree reduction operators
\[
\bm{E}_L^f = \LRp{\begin{array}{c}
\bm{I}\\ \ell_1 \LRp{\bm{E}_{N-1}^N}^T \\ \ell_2 \LRp{\bm{E}_{N-2}^N}^T\\ \vdots \\ \ell_N \LRp{\bm{E}_0^N}^T
\end{array}}.
\]
Then, the face lift matrix $\bm{L}^f$ may be factorized as 
\[
\bm{L}^f = \bm{E}_L^f \bm{L}_0.
\]
Each degree reduction operator can be decomposed into the product of one-degree reduction operators 
\[
\LRp{\bm{E}_{N-i}^N}^T = \LRp{\bm{E}^{N-i+1}_{N-i}}^T\LRp{\bm{E}^{N-i+2}_{N-i+1}}^T\ldots \LRp{\bm{E}^N_{N-1}}^T.
\] 
Fast algorithms for Bernstein polynomials typically exploit the fact that one-degree reduction operator is sparse, containing at most $d+1$ non-zero entries per row in $d$ dimensions.  To apply the face reduction matrix $\bm{E}_L^f$ requires applying at most $(N+1)$ triangular degree reduction operations to $(N+1)$ sets of triangular face data.  Expanding degree reduction operators as the product of one-degree reduction operators gives
\[
\bm{E}_L^f = \LRp{\begin{array}{c}
\bm{I}\\ 
\ell_1 \LRp{\bm{E}_{N-1}^N}^T \\ 
\ell_2 \LRp{\bm{E}_{N-2}^{N-1}}^T \LRp{\bm{E}_{N-1}^N}^T \\ \vdots \\ 
\ell_N \LRp{\bm{E}_0^1}^T\LRp{\bm{E}_1^2}^T\ldots \LRp{\bm{E}_{N-1}^N}^T
\end{array}}.
\]
The application $\bm{E}_L^f$ to a vector of $(N+1)(N+2)/2$ coefficients $\bm{u}^f$ is described in Algorithm~\ref{alg:slice_lift}.  Figure~\ref{fig:optlift} shows an illustration of this algorithm for applying the lift matrix, which can be interpreted as a slice-by-slice sweep through a simplex.  
\begin{algorithm}
\caption{Optimal-complexity application of a Bernstein-Bezier face lift matrix $\bm{E}_L^f$.} 
\label{alg:slice_lift} 
\begin{algorithmic}[1]
\State Compute and store $\bm{L}_0\bm{u}^f$.  
\State Store the first $(N+1)(N+2)/2$ entries of $\bm{E}_L^f \bm{u}^f$ as $\bm{L}_0\bm{u}^f$.
\State Compute and store $\LRp{\bm{E}_{N-1}^N}^T \bm{u}^f$.
\For{$j = 1,\ldots,N$} 
\State Store the next $(N+1-j)(N+2-j)/2$ entries of $\bm{E}_L^f \bm{u}^f$ as $\ell_j\LRp{\bm{E}_{N-j}^N}^T \bm{u}^f$.
\State Compute and store $\LRp{\bm{E}_{N-j-1}^N}^T \bm{u}^f$ by applying $\LRp{\bm{E}^{N-j}_{N-j-1}}^T$ to $\LRp{\bm{E}_{N-j}^N}^T \bm{u}^f$.
\EndFor
\end{algorithmic}
\end{algorithm}
$\bm{L}_0$ may be applied in a sparse manner as there are no more than $7$ non-zero entries per row (independent of $N$), yielding an asymptotic cost of $O(N^{d-1})$.  Since the main cost of the algorithm is $N$ one-degree reductions of cost $O(N^2)$ each, this results in an asymptotically optimal complexity of $O(N^3) = O(N^d)$ operations, which improves upon the $O(N^4) = O(N^{d+1})$ asymptotic complexity of applying the inverse of the mass matrix using techniques in \cite{kirby2015efficient}.  Additionally, while the implementation of Algorithm~\ref{alg:slice_lift} requires more memory for intermediate storage, it does not increase the asymptotic memory cost.  
\begin{figure}
\centering
\subfloat{\includegraphics[width=.32\textwidth]{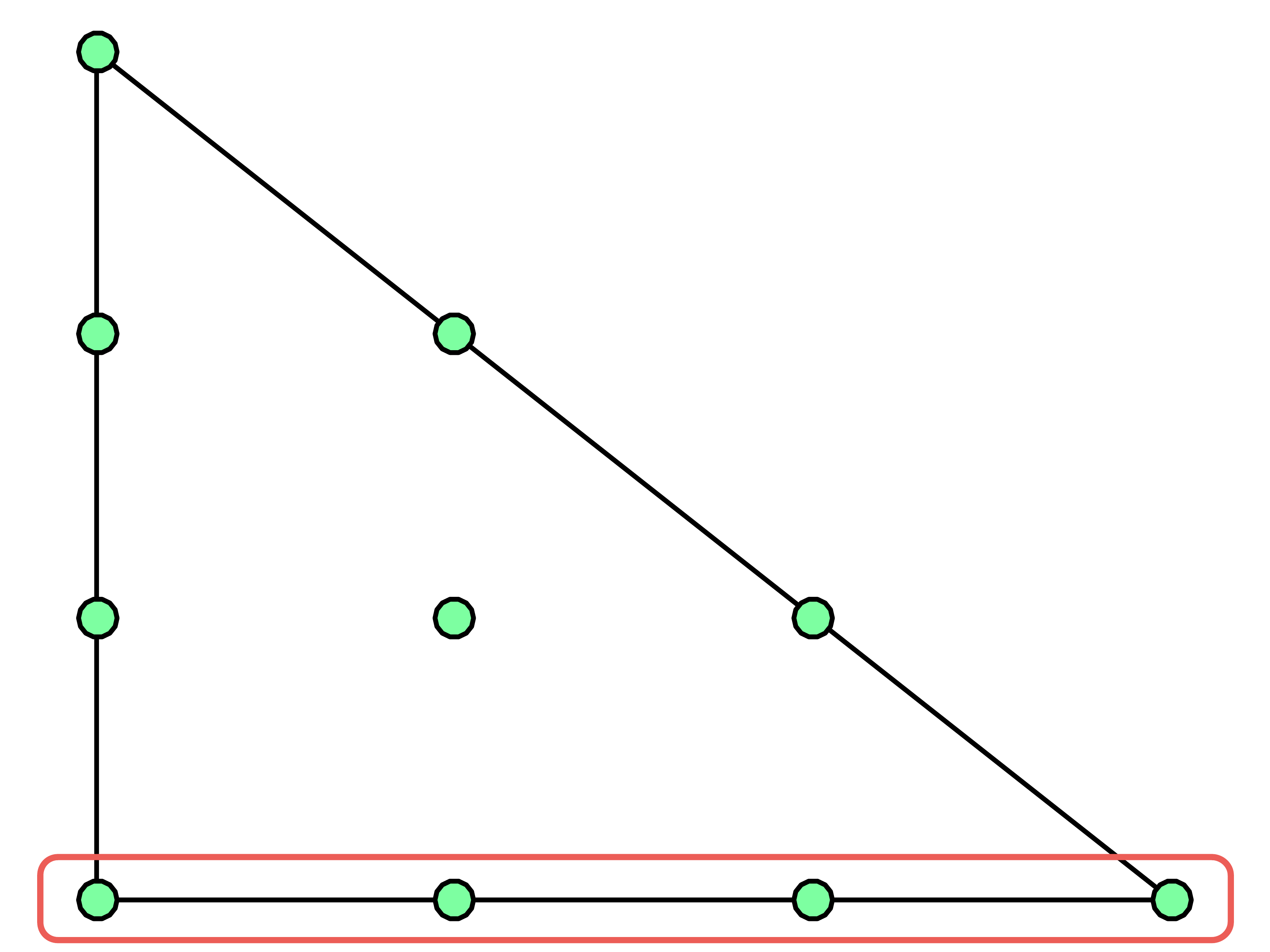}}
\subfloat{\includegraphics[width=.32\textwidth]{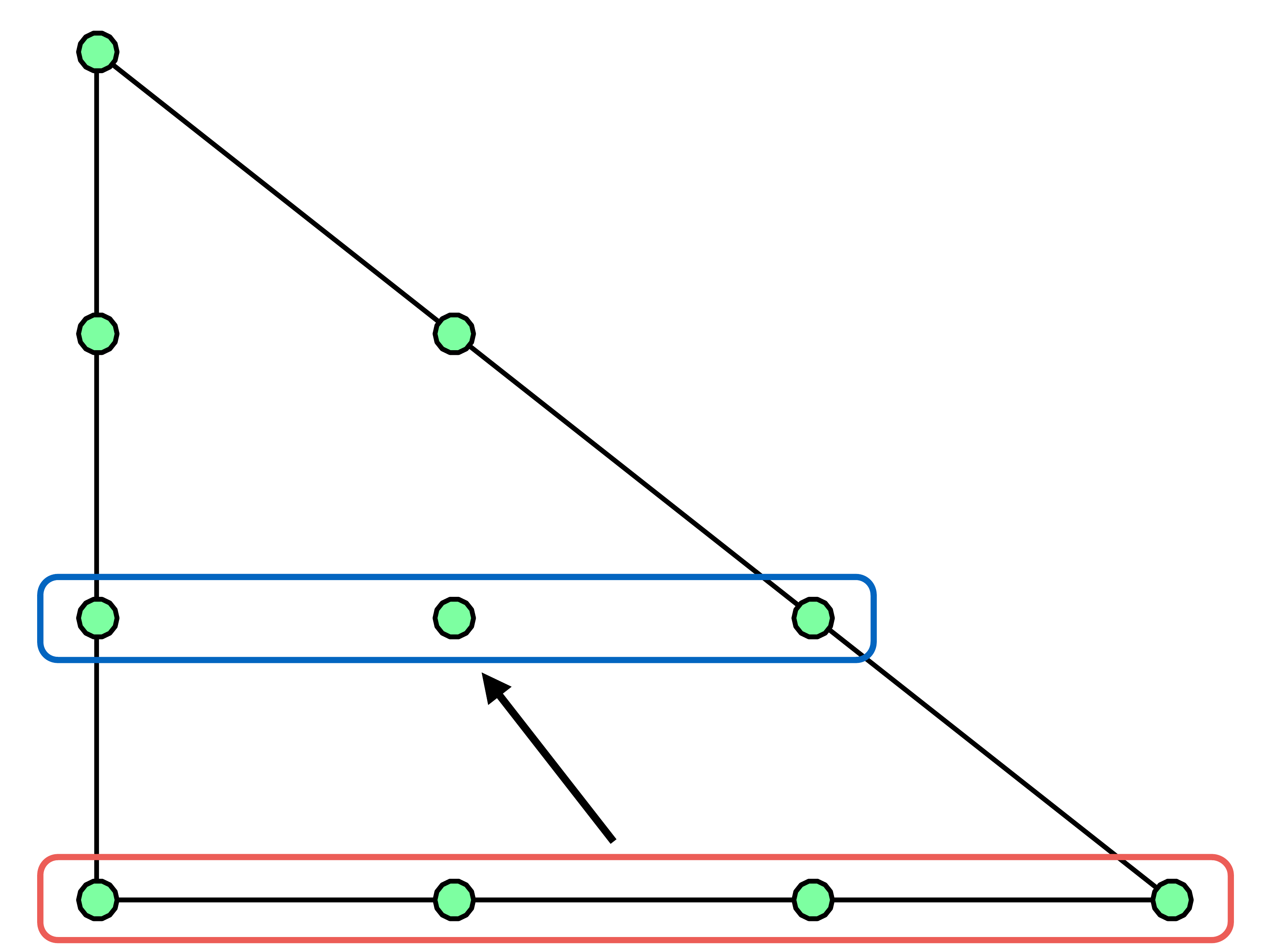}}
\subfloat{\includegraphics[width=.32\textwidth]{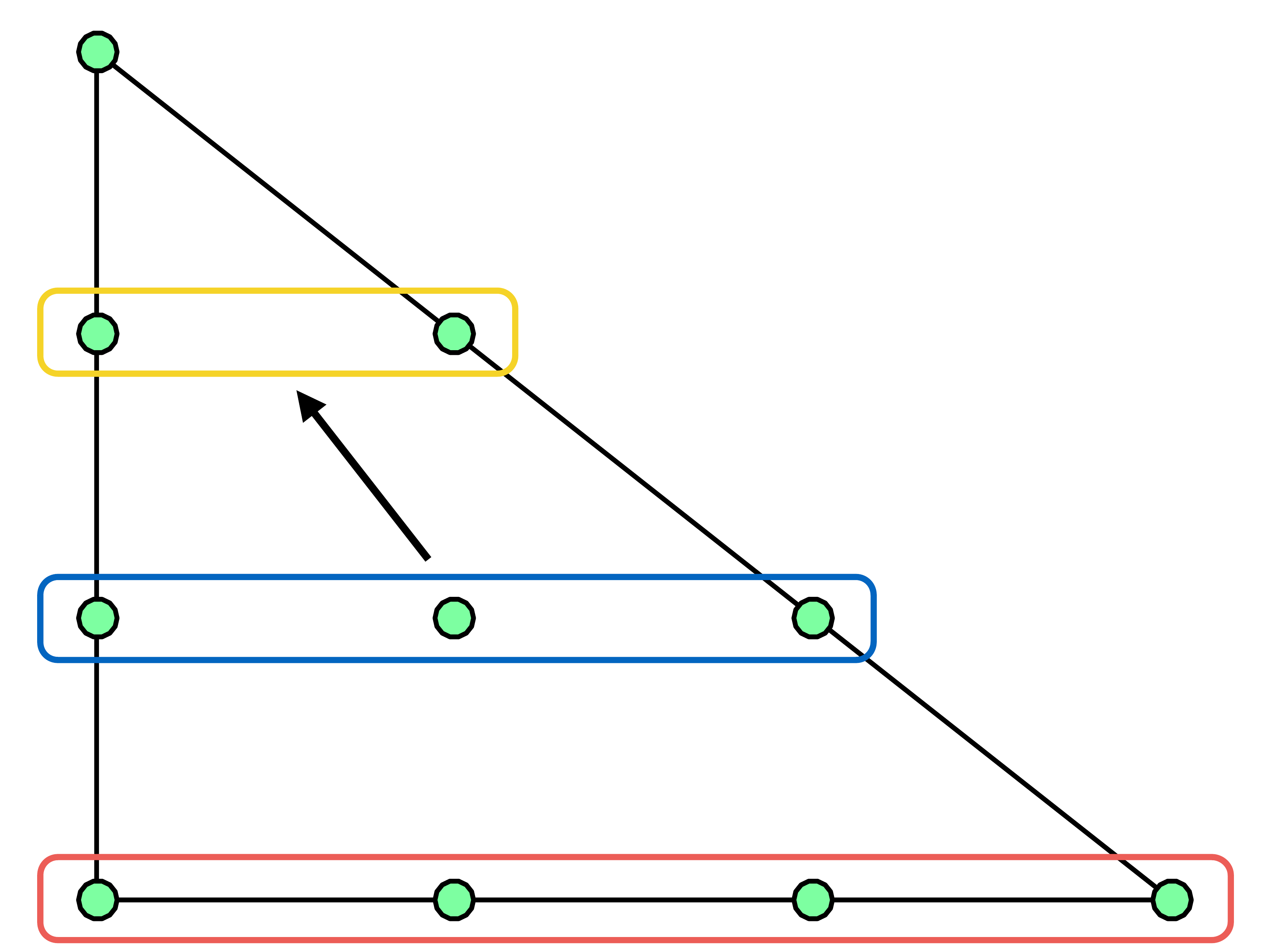}}
\caption{Illustration of the ``slice-by-slice'' optimal-complexity application of the face reduction matrix $\bm{E}_L^f$.}
\label{fig:optlift}
\end{figure}

Despite the optimal asymptotic complexity of Algorithm~\ref{alg:slice_lift}, the implementation of this approach on many-core architectures suffers from several drawbacks.  For example, for implementations where threads are parallelized over degrees of freedom, a synchronization is required after each one-degree reduction to ensure that the result is available in memory before applying the next degree reduction.  This can be costly, as these points of synchronization effectively serialize the code.  We address this cost by introducing a second method of applying the lift matrix which is faster for low orders of approximation.  

We wish to determine $\bm{p}_L$, which results from the application of the lift matrix $\bm{L}$ applied to the flux vector over all tetrahedral faces  
\[
\bm{p}_L = \bm{L}\LRp{\begin{array}{c}
\bm{p}^1 \\ \vdots \\ \bm{p}^4 \end{array}} = \LRp{\bm{L}^1 \middle| \ldots \middle| \bm{L}^4}\LRp{\begin{array}{c}
\bm{p}^1 \\ \vdots \\ \bm{p}^4 \end{array}}.
\]
For convenience, we define the lift reduction matrix $\bm{E}_L$ by concatenating matrices $\bm{E}^f_L$ for each face of the tetrahedron
\[
\bm{E}_L = \LRp{\bm{E}_L^1 \middle| \ldots \middle| \bm{E}_L^4},
\]
where, due to rotational symmetry of the Bernstein-Bezier basis, the block columns $\bm{E}_L^1, \bm{E}_L^2, \bm{E}_L^3, \bm{E}_L^4$ are identical up to row permutation.  Then, $\bm{p}_L$ may be determined in two steps
\begin{align*}
\bm{p}_L^f &= \bm{L}_0 \bm{p}^f, \qquad f = 1,\ldots,4\\
\bm{p}_L &= \bm{E}_L \LRp{\bm{p}_L^1 \middle| \ldots \middle| \bm{p}_L^4}.
\end{align*}
This procedure effectively factorizes the lift matrix $\bm{L}$ into the lift reduction matrix $\bm{E}_L$ and the smaller matrix $\bm{L}_0$.  By exploiting the sparsity of both $\bm{E}_L$ and $\bm{L}_0$, the above factorization can be made more efficient than a direct application of the lift matrix for sufficiently large $N$.  

The sparsity pattern of $\bm{E}_L$ is such that there are at most $N^f_p + 3$ entries per row for a tetrahedron.  In comparison, the face reduction matrix $\bm{E}_L^f$ contains rows with $N^f_p$ non-zero entries.  Thus, despite the fact that the lift reduction matrix $\bm{E}_L$  is the concatenation of four face reduction matrices $\bm{E}_L^f$, the number of non-zeros per row does not increase greatly.  Additionally, the number of non-zeros per row of $\bm{E}_L$ varies less between rows than the number of non-zeros per row of $\bm{E}_L^f$.  This suggests the storage $\bm{E}_L$ in sparse format using two fixed width arrays containing values and column indices, where the width is equal to the maximum number of non-zeros in a row of $\bm{E}_L$.  

Finally, numerical evidence suggests that applying the Bernstein-Bezier lift matrix through the above factorization may be more numerically stable than direct multiplication.  This is discussed in more detail in Section~\ref{sec:roundoff}.

\section{Numerical results}
\label{sec:results}

In the following sections, we examine the behavior of the Bernstein polynomial basis compared to a nodal basis, focusing on numerical stability in Section~\ref{sec:roundoff} and computational efficiency in Sections~\ref{sec:runtime} and \ref{sec:roofline}.

\subsection{Sensitivity to roundoff}
\label{sec:roundoff}

While Bernstein polynomials may offer significant computational advantages compared to nodal polynomials, their numerical stability remains in question.  Since the Bernstein-Bezier basis consists of monomials in barycentric coordinates, the condition number of the mass and Vandermonde matrices increases exponentially with the degree $N$ \cite{marco2007fast, chan2015short}.  Despite this drawback, numerically stable computations are often still possible through algorithms which exploit properties (such as positivity) of the Bernstein-Bezier basis \cite{kiciak2004recursive,marco2007fast}.  The issue of numerical stability is only exacerbated for GPU computations.  While computations on modern GPUs can be done in double precision, peak performance is achievable only through single precision computations.  Since all results in this paper are computed in single precision, we wish to quantify the impact of finite-precision effects on the accuracy of nodal and Bernstein-Bezier bases.  

\begin{figure}
\centering
\subfloat{
\begin{tikzpicture}
\begin{semilogyaxis}[
	legend cell align=left,
	width=.36\textwidth,
	title={Derivative matrices},
    xlabel={Degree $N$},
    ylabel={Condition number},
    xmin=.5, xmax=9.5,
    ymin=.5, ymax=1500,
    xtick={1,2,3,4,5,6,7,8,9},
    legend pos=north west,
    xmajorgrids=true,
    ymajorgrids=true,
    grid style=dashed,
] 
\addplot+[color=blue,mark=*,mark options={fill=markercolor},semithick]
coordinates{(1,2.1166)(2,4.10326)(3,7.8693)(4,15.1262)(5,29.2)(6,56.5943)(7,110.058)(8,214.618)(9,419.473)};
\addplot+[color=red,mark=square*,mark options={fill=markercolor},semithick]
coordinates{(1,2.1166)(2,3.59166)(3,4.58592)(4,5.08895)(5,5.85851)(6,6.49343)(7,7.43361)(8,8.3401)(9,10.2532)};

\legend{Nodal, Bernstein}
\end{semilogyaxis}
\end{tikzpicture}
}
\subfloat{
\begin{tikzpicture}
\begin{semilogyaxis}[
	legend cell align=left,
	width=.36\textwidth,
title={Lift matrix},
    xlabel={Degree $N$},
    xmin=.5, xmax=9.5,
    ymin=.5, ymax=1500,    
    xtick={1,2,3,4,5,6,7,8,9},
    legend pos=north west,
    xmajorgrids=true,
    ymajorgrids=true,
    grid style=dashed,
] 
\addplot+[color=blue,mark=*,semithick,mark options={fill=markercolor}]
coordinates{(1,1)(2,1.29099)(3,1.5)(4,1.67332)(5,1.82574)(6,1.96396)(7,2.09165)(8,2.21108)(9,2.32379)};
\addplot+[color=red,mark=square*,semithick,mark options={fill=markercolor}]
coordinates{(1,1)(2,3.21827)(3,8.30628)(4,14.1564)(5,22.533)(6,36.2175)(7,58.978)(8,100.128)(9,178.895)};

\legend{Nodal, Bernstein}

\end{semilogyaxis}
\end{tikzpicture}
}
\subfloat{
\begin{tikzpicture}
\begin{semilogyaxis}[
	legend cell align=left,
	width=.36\textwidth,
title={$\bm{E}_L$ and $\bm{L}_0$.},
    xlabel={Degree $N$},
    xmin=.5, xmax=9.5,
    ymin=.5, ymax=1500,    
    xtick={1,2,3,4,5,6,7,8,9},
    legend pos=north west,
    xmajorgrids=true,
    ymajorgrids=true,
    grid style=dashed,
] 
\addplot+[color=red,mark=*,semithick,mark options={fill=markercolor}]
coordinates{(1,1.6)(2,2.14286)(3,2.66667)(4,3.18182)(5,3.69231)(6,4.2)(7,4.70588)(8,5.21053)(9,5.71429)};

\addplot+[color=red,mark=square*,semithick,mark options={fill=markercolor}]
coordinates{(1,1.32288)(2,1.91485)(3,2.95099)(4,4.75395)(5,7.90833)(6,13.4748)(7,23.3872)(8,41.1893)(9,73.4078)};

\legend{$\bm{L}_0$ (Bernstein), $\bm{E}_L$ (Bernstein)}

\end{semilogyaxis}
\end{tikzpicture}
}
\caption{Condition numbers (as defined in Equation~\ref{eq:cond})  of derivative and lift matrices $\bm{L}^f$ under both nodal and Bernstein-Bezier bases for $N = 1,\ldots,9$.}
\label{fig:liftRoundoff}
\end{figure}
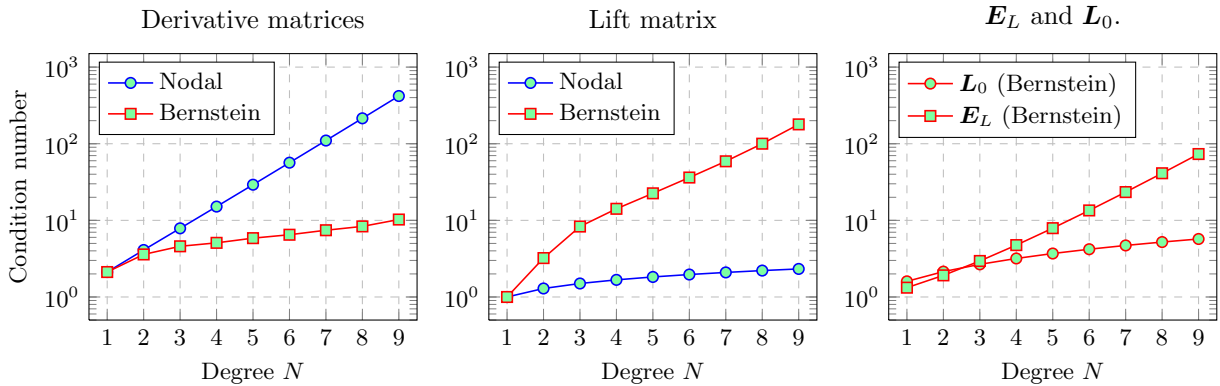

Since we do not invert either matrix, we consider the conditioning of matrix multiplication.  Assuming the $2$-norm, straightforward manipulations \cite{ipsen2009numerical} give
\[
\frac{\nor{\tilde{\bm{A}}{\tilde{\bm{x}}}-\bm{A}\bm{x}}}{\nor{{\bm{Ax}}}} \leq \frac{\nor{\bm{A}}\nor{\bm{x}}}{\nor{\bm{Ax}}} \epsilon_{\bm{Ax}} \leq \frac{\nor{\bm{A}}}{\min_{\bm{x}\neq 0} \frac{\nor{\bm{Ax}}}{\nor{\bm{x}}}} \epsilon_{\bm{Ax}} \leq \frac{\sigma_1}{\sigma_r}\epsilon_{\bm{Ax}}.
\]
where $r$ is the rank of $\bm{A}$, and the relative error $\epsilon_{\bm{Ax}}$ is 
\[
\epsilon = \LRp{\frac{\nor{\tilde{\bm{A}}-\bm{A}}}{\nor{\bm{A}}} + \frac{\nor{\tilde{\bm{x}}-\bm{x}}}{\nor{\bm{x}}} + \frac{\nor{\tilde{\bm{A}}-\bm{A}}}{\nor{\bm{A}}}\frac{\nor{\tilde{\bm{x}}-\bm{x}}}{\nor{\bm{x}}} }.
\]
This implies that the condition number, defined as the ratio of the largest and smallest non-zero singular values
\begin{equation}
\label{eq:cond}
\kappa(A) = \frac{\sigma_1}{\sigma_r}, \qquad \sigma_{r+1} = 0, 
\end{equation}
can be used to measure the conditioning of matrix multiplication.  Figure~\ref{fig:liftRoundoff} compares the conditioning of the derivative and lift matrices using nodal and Bernstein-Bezier bases.  As the degree $N$ increases, the condition number of the Bernstein (barycentric) derivative matrix is observed to remain small relative to the condition number of the nodal derivative matrix.  However, the converse is true for the lift matrix, where the Bernstein lift matrix is observed to be more poorly conditioned than the nodal lift matrix.  
This is somewhat alleviated by decomposing the application of the Bernstein lift into an application of $\bm{L}_0$ and $\bm{E}_L$.  The condition number of $\bm{L}_0$ is very tame, and the condition number of $\bm{E}_L$ is about half an order of magnitude smaller than that of the full Bernstein lift matrix.


An additional source of finite precision effects may stem from large variations in magnitude of positive and negative entries in the lift matrix.  Figure~\ref{fig:liftMinMax} plots the minimum and maximum values of the lift matrix $\bm{L}$ under both Bernstein and nodal bases.  While the minimum and maximum values of nodal lift matrices remain small in magnitude as $N$ increases, the minimum and maximum values of the Bernstein lift matrix increase rapidly in magnitude.  This can exacerbate numerical cancellation when directly computing using the Bernstein lift matrix.   However, the entries of $\bm{L}_0$ and $\bm{E}_L$ do not grow as rapidly with increasing $N$, suggesting that using the decomposition of the lift matrix may be more numerically robust than directly computing with the lift matrix $\bm{L}$.
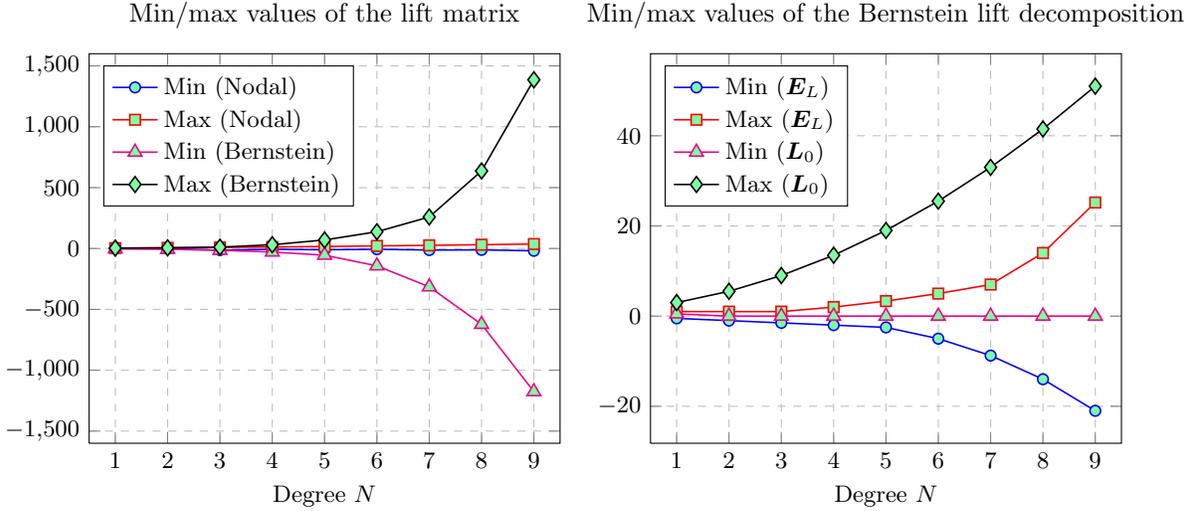
\begin{figure}
\centering
\subfloat{
\begin{tikzpicture}
\begin{axis}[
	legend cell align=left,
	width=.475\textwidth,
	title={Min/max values of the lift matrix},
	xlabel={Degree $N$},
	xmin=.5, xmax=9.5,
	xtick={1,2,3,4,5,6,7,8,9},
	ymin= -1600, ymax=1600,	
	ytick={-1500,-1000,-500,0,500,1000,1500},
	legend pos=north west,
	xmajorgrids=true,
	ymajorgrids=true,
	grid style=dashed,
] 
\addplot+[color=blue,mark=*,semithick,mark options={fill=markercolor}]
coordinates{(1,-2)(2,-1.75)(3,-13.5)(4,-6.52157)(5,-10.3403)(6,-6.08246)(7,-12.846)(8,-11.4069)(9,-18.8242)};
\addplot+[color=red,mark=square*,semithick,mark options={fill=markercolor}]
coordinates{(1,3)(2,5.5)(3,9)(4,12.3666)(5,16.3324)(6,21.1329)(7,25.7652)(8,30.996)(9,36.7226)};
\addplot+[color=magenta,mark=triangle*,semithick,mark options={scale=1.5,fill=markercolor}]
coordinates{(1,-2)(2,-6.5)(3,-15)(4,-29)(5,-55)(6,-142.5)(7,-315)(8,-623)(9,-1176)};
\addplot+[color=black,mark=diamond*,semithick,mark options={scale=1.5,fill=markercolor}]
coordinates{(1,3)(2,5.5)(3,11)(4,31)(5,70)(6,137.5)(7,259)(8,637)(9,1386)};

\legend{Min (Nodal), Max (Nodal), Min (Bernstein), Max (Bernstein)}

\end{axis}
\end{tikzpicture}
}
\subfloat{
\begin{tikzpicture}
\begin{axis}[
	legend cell align=left,
	width=.475\textwidth,
    title={Min/max values of the Bernstein lift decomposition},
    xlabel={Degree $N$},
    xmin=.5, xmax=9.5,
    xtick={1,2,3,4,5,6,7,8,9},
    legend pos=north west,
    xmajorgrids=true,
    ymajorgrids=true,
    grid style=dashed,
] 
\addplot+[color=blue,mark=*,mark options={fill=markercolor},semithick]
coordinates{(1,-0.5)(2,-1)(3,-1.5)(4,-2)(5,-2.5)(6,-5)(7,-8.75)(8,-14)(9,-21)};
\addplot+[color=red,mark=square*,semithick,mark options={fill=markercolor}]
coordinates{(1,1)(2,1)(3,1)(4,2)(5,3.33333)(6,5)(7,7)(8,14)(9,25.2)};
\addplot+[color=magenta,mark=triangle*,semithick,mark options={scale=1.5,fill=markercolor}]
coordinates{(1,0.5)(2,0)(3,0)(4,0)(5,0)(6,0)(7,0)(8,0)(9,0)};
\addplot+[color=black,mark=diamond*,semithick,mark options={scale=1.5,fill=markercolor}]
coordinates{(1,3)(2,5.5)(3,9)(4,13.5)(5,19)(6,25.5)(7,33)(8,41.5)(9,51)};

\legend{Min ($\bm{E}_L$), Max ($\bm{E}_L$), Min ($\bm{L}_0$), Max ($\bm{L}_0$)}

\end{axis}
\end{tikzpicture}
}
\caption{Minimum and maximum values over all entries of the lift matrix using nodal and Bernstein-Bezier bases, as well as minimum and maximum values of entries in the decomposition of the Bernstein lift matrix into $\bm{L}_0$ and $\bm{E}_L$, which grow much less rapidly than the min/max values of the Bernstein lift matrix.}
\label{fig:liftMinMax}
\end{figure}


Finally, we examine numerically the behavior of the error over time for both nodal and Bernstein-Bezier bases.  For a cube domain $\Omega = [-.5,.5]^3$, we compare the $L^2$ error between the approximated and exact solution to the acoustic wave equation with $\rho = \kappa = 1$ 
\[
p(x,y,z,\tau) = \cos(\pi x)\cos(\pi y)\cos(\pi z)\cos(\sqrt{3}\pi \tau)
\]
at various times and orders of approximation $N$.  The initial condition for the Bernstein-Bezier basis is determined by interpolating the solution using the nodal basis, then determining the Bernstein coefficients through a change of basis.  The conditioning of this change of basis depends on the choice of interpolation nodes used, as the transformation between Legendre (modal) and nodal basis is well conditioned for an appropriate choice of points \cite{hesthaven2007nodal}, and the transformation between Legendre and Bernstein polynomials is reasonably conditioned \cite{farouki2000legendre}.  In our experiments, we use the optimized Warp and Blend interpolation points of Warburton \cite{warburton2006explicit}, and observe that the condition number of the Bernstein-Vandermonde matrix to be $O(10^3)$ at $N=9$, implying a loss of at most three digits of accuracy.  If the control points of the Lagrange interpolant are computed in double precision before converting to single precision, the effect of this conditioning is negligible for moderate $N$.  Recent work in \cite{ainsworth2015computing} gives a more algorithms for stable transformations between Lagrange and Bernstein-Bezier bases of arbitrary order.  

Since both nodal and Bernstein-Bezier bases span the same high order polynomial space, they produce the same result in the presence of no roundoff error.  To quantify the effect of roundoff, we examine the behavior of the solution over time for orders of approximation $N = 5,6,7$ using a mesh of 1536 tetrahedral elements.  In all cases, the approximation error is below the single precision threshold in order to ascertain the effect of roundoff error on the kernels for each basis.  Figure~\ref{fig:errOverTime} shows the $L^2$ error over time for each degree $N$.  While the error for both both nodal and Bernstein-Bezier bases remains at approximately single precision from time $\tau \in [0, 25]$, the magnitude of $L^2$ error under the Bernstein-Bezier basis is equal to or less than that of the nodal basis.  This suggests that Bernstein polynomials are not only sufficiently well-conditioned for single precision computations, but that the Bernstein-Bezier basis is at least as numerically stable as nodal polynomials at high orders of approximation.  

\begin{figure}
\centering
\subfloat{
\begin{tikzpicture}
\begin{semilogyaxis}[
	legend cell align=left,
	width=.335\textwidth,
    title={$N = 5$},
    xlabel={Time},
    ylabel={$L^2$ error},
    xmin=-.5, xmax=25.5,    
    xtick={0,5,10,15,20,25},    
    ymin=5e-8,ymax=1e-5,    
    legend pos=south east,
    xmajorgrids=true,
    ymajorgrids=true,
    grid style=dashed,
] 
\addplot+[color=blue,mark=none,mark options={fill=markercolor},semithick]
coordinates{(0.00942223,3.84995e-07)(0.263822,3.77063e-07)(0.518223,4.99659e-07)(0.772623,4.95252e-07)(1.02702,5.11151e-07)(1.28142,6.06889e-07)(1.53582,6.06564e-07)(1.79022,6.29366e-07)(2.04462,7.41239e-07)(2.29902,6.69313e-07)(2.55342,6.96358e-07)(2.80782,7.51253e-07)(3.06222,7.69608e-07)(3.31662,7.20266e-07)(3.57102,7.8324e-07)(3.82543,8.72844e-07)(4.07983,8.01464e-07)(4.33423,1.07615e-06)(4.58863,7.89313e-07)(4.84303,7.77708e-07)(5.09743,8.17493e-07)(5.35183,8.98188e-07)(5.60623,9.01123e-07)(5.86063,8.94138e-07)(6.11503,1.17259e-06)(6.36943,8.44184e-07)(6.62383,1.38205e-06)(6.87823,8.91514e-07)(7.13263,9.73265e-07)(7.38703,9.11068e-07)(7.64143,1.12352e-06)(7.89583,1.14526e-06)(8.15023,1.08759e-06)(8.40463,1.54808e-06)(8.65903,9.1388e-07)(8.91343,1.83281e-06)(9.16783,1.07909e-06)(9.42223,1.81755e-06)(9.67663,1.5794e-06)(9.93103,9.47185e-07)(10.1854,9.57205e-07)(10.4398,1.00801e-06)(10.6942,1.16644e-06)(10.9486,9.65957e-07)(11.203,1.42273e-06)(11.4574,1.04892e-06)(11.7118,1.51281e-06)(11.9662,1.37004e-06)(12.2206,1.38723e-06)(12.475,1.92986e-06)(12.7294,1.00154e-06)(12.9838,2.26187e-06)(13.2382,1.01886e-06)(13.4926,2.41264e-06)(13.747,1.5811e-06)(14.0014,1.22239e-06)(14.2558,1.16729e-06)(14.5102,1.21446e-06)(14.7646,1.52878e-06)(15.019,1.02512e-06)(15.2734,1.8455e-06)(15.5278,1.0822e-06)(15.7822,1.93168e-06)(16.0366,1.52889e-06)(16.291,1.71152e-06)(16.5454,2.09594e-06)(16.7998,1.39752e-06)(17.0542,2.63722e-06)(17.3086,9.91853e-07)(17.563,2.98945e-06)(17.8174,1.40358e-06)(18.0718,2.95084e-06)(18.3262,2.42357e-06)(18.5806,2.39904e-06)(18.835,3.40491e-06)(19.0894,1.39077e-06)(19.3438,3.97567e-06)(19.5982,1.12321e-06)(19.8526,1.14893e-06)(20.107,1.01862e-06)(20.3614,1.27616e-06)(20.6158,1.12612e-06)(20.8702,1.30673e-06)(21.1246,1.4958e-06)(21.379,1.14004e-06)(21.6334,1.84564e-06)(21.8878,1.04765e-06)(22.1422,2.07671e-06)(22.3966,1.42354e-06)(22.651,1.89572e-06)(22.9054,2.06203e-06)(23.1598,1.50363e-06)(23.4142,2.62205e-06)(23.6686,1.05211e-06)(23.923,2.98076e-06)(24.1774,1.32386e-06)(24.4318,2.97754e-06)(24.6862,2.28985e-06)(24.9406,2.46113e-06)};
\addplot+[color=red,mark=none,semithick,mark options={fill=markercolor}]
coordinates{(0.00942223,3.86948e-07)(0.263822,2.35677e-07)(0.518223,3.58057e-07)(0.772623,2.88983e-07)(1.02702,3.13855e-07)(1.28142,3.36094e-07)(1.53582,3.23459e-07)(1.79022,4.07506e-07)(2.04462,3.95739e-07)(2.29902,4.28878e-07)(2.55342,3.92824e-07)(2.80782,4.58387e-07)(3.06222,4.23612e-07)(3.31662,4.54228e-07)(3.57102,4.30268e-07)(3.82543,4.55962e-07)(4.07983,4.91946e-07)(4.33423,5.83775e-07)(4.58863,5.12281e-07)(4.84303,5.91047e-07)(5.09743,4.83349e-07)(5.35183,5.00045e-07)(5.60623,5.26429e-07)(5.86063,5.28736e-07)(6.11503,5.89948e-07)(6.36943,5.22985e-07)(6.62383,6.82256e-07)(6.87823,5.42033e-07)(7.13263,5.82006e-07)(7.38703,5.13774e-07)(7.64143,5.70022e-07)(7.89583,6.08859e-07)(8.15023,6.58064e-07)(8.40463,7.14815e-07)(8.65903,6.94315e-07)(8.91343,9.30685e-07)(9.16783,7.20271e-07)(9.42223,9.46182e-07)(9.67663,9.52157e-07)(9.93103,6.97769e-07)(10.1854,8.1226e-07)(10.4398,5.92856e-07)(10.6942,6.14142e-07)(10.9486,6.27902e-07)(11.203,6.51575e-07)(11.4574,6.05284e-07)(11.7118,6.63185e-07)(11.9662,7.3344e-07)(12.2206,6.77831e-07)(12.475,8.52744e-07)(12.7294,6.23157e-07)(12.9838,1.03062e-06)(13.2382,6.73812e-07)(13.4926,1.20917e-06)(13.747,8.25663e-07)(14.0014,7.43881e-07)(14.2558,7.17358e-07)(14.5102,6.89299e-07)(14.7646,5.77591e-07)(15.019,7.36187e-07)(15.2734,6.76203e-07)(15.5278,7.28733e-07)(15.7822,7.14491e-07)(16.0366,7.85869e-07)(16.291,7.74484e-07)(16.5454,8.90705e-07)(16.7998,8.08551e-07)(17.0542,1.13944e-06)(17.3086,7.77788e-07)(17.563,1.46731e-06)(17.8174,7.79281e-07)(18.0718,1.45417e-06)(18.3262,1.34498e-06)(18.5806,1.23774e-06)(18.835,1.79806e-06)(19.0894,9.05925e-07)(19.3438,2.14399e-06)(19.5982,8.06568e-07)(19.8526,1.45493e-06)(20.107,1.09129e-06)(20.3614,1.02786e-06)(20.6158,1.19968e-06)(20.8702,7.55967e-07)(21.1246,1.02041e-06)(21.379,8.5471e-07)(21.6334,8.7873e-07)(21.8878,7.47256e-07)(22.1422,7.91899e-07)(22.3966,7.49749e-07)(22.651,6.8323e-07)(22.9054,7.77994e-07)(23.1598,7.53276e-07)(23.4142,7.34561e-07)(23.6686,8.6858e-07)(23.923,9.91957e-07)(24.1774,6.46082e-07)(24.4318,1.16941e-06)(24.6862,9.2726e-07)(24.9406,1.01434e-06)};
\legend{Nodal,Bernstein}
\end{semilogyaxis}
\end{tikzpicture}
}
\subfloat{
\begin{tikzpicture}
\begin{semilogyaxis}[
	legend cell align=left,
	width=.335\textwidth,
    title={$N = 6$},
    xlabel={Time},
    xmin=-.5, xmax=25.5,    
    xtick={0,5,10,15,20,25},    
    ymin=5e-8,ymax=1e-5,    
    legend pos=south east,
    xmajorgrids=true,
    ymajorgrids=true,
    grid style=dashed,
] 
\addplot+[color=blue,mark=none,mark options={fill=markercolor},semithick]
coordinates{(0.00942223,1.59337e-07)(0.263822,3.34267e-07)(0.518223,4.27429e-07)(0.772623,5.08211e-07)(1.02702,5.51752e-07)(1.28142,6.07672e-07)(1.53582,6.29049e-07)(1.79022,6.21552e-07)(2.04462,7.72263e-07)(2.29902,6.64154e-07)(2.55342,7.0817e-07)(2.80782,7.17139e-07)(3.06222,7.89352e-07)(3.31662,8.07981e-07)(3.57102,7.35146e-07)(3.82543,1.05491e-06)(4.07983,8.16982e-07)(4.33423,1.2802e-06)(4.58863,8.67775e-07)(4.84303,1.07838e-06)(5.09743,9.90645e-07)(5.35183,9.15434e-07)(5.60623,1.01784e-06)(5.86063,8.36909e-07)(6.11503,1.56378e-06)(6.36943,8.54732e-07)(6.62383,1.3593e-06)(6.87823,1.02163e-06)(7.13263,1.14159e-06)(7.38703,1.21354e-06)(7.64143,1.03392e-06)(7.89583,1.78755e-06)(8.15023,8.94593e-07)(8.40463,1.02153e-06)(8.65903,1.09639e-06)(8.91343,2.20666e-06)(9.16783,1.48997e-06)(9.42223,1.66478e-06)(9.67663,1.80356e-06)(9.93103,1.22332e-06)(10.1854,1.91407e-06)(10.4398,1.00296e-06)(10.6942,1.70362e-06)(10.9486,1.1744e-06)(11.203,1.44629e-06)(11.4574,1.38178e-06)(11.7118,1.10857e-06)(11.9662,1.38231e-06)(12.2206,1.0656e-06)(12.475,2.78469e-06)(12.7294,1.13698e-06)(12.9838,2.5442e-06)(13.2382,1.38374e-06)(13.4926,2.12876e-06)(13.747,1.81043e-06)(14.0014,1.55534e-06)(14.2558,1.97457e-06)(14.5102,1.08551e-06)(14.7646,1.97869e-06)(15.019,1.19172e-06)(15.2734,1.77256e-06)(15.5278,1.3148e-06)(15.7822,1.48718e-06)(16.0366,1.47768e-06)(16.291,1.2237e-06)(16.5454,1.43783e-06)(16.7998,1.03551e-06)(17.0542,1.26394e-06)(17.3086,1.10258e-06)(17.563,1.18557e-06)(17.8174,1.1628e-06)(18.0718,3.33249e-06)(18.3262,3.21885e-06)(18.5806,2.23517e-06)(18.835,3.73378e-06)(19.0894,1.33479e-06)(19.3438,3.7067e-06)(19.5982,1.47732e-06)(19.8526,3.20031e-06)(20.107,2.24583e-06)(20.3614,2.36502e-06)(20.6158,2.86065e-06)(20.8702,1.55678e-06)(21.1246,2.99753e-06)(21.379,1.25036e-06)(21.6334,2.76567e-06)(21.8878,1.70618e-06)(22.1422,2.18238e-06)(22.3966,2.13133e-06)(22.651,1.43188e-06)(22.9054,2.31409e-06)(23.1598,1.22396e-06)(23.4142,2.16151e-06)(23.6686,1.33573e-06)(23.923,1.77086e-06)(24.1774,1.69386e-06)(24.4318,1.32461e-06)(24.6862,1.77528e-06)(24.9406,1.15981e-06)};

\addplot+[color=red,mark=none,semithick,mark options={fill=markercolor}]
coordinates{(0.00942223,9.03235e-08)(0.263822,1.29098e-07)(0.518223,1.93936e-07)(0.772623,2.05495e-07)(1.02702,3.04613e-07)(1.28142,3.41969e-07)(1.53582,3.35877e-07)(1.79022,4.03e-07)(2.04462,4.17363e-07)(2.29902,5.48092e-07)(2.55342,3.64127e-07)(2.80782,5.69421e-07)(3.06222,4.01557e-07)(3.31662,6.27907e-07)(3.57102,7.23612e-07)(3.82543,6.79905e-07)(4.07983,8.85585e-07)(4.33423,5.95572e-07)(4.58863,9.40574e-07)(4.84303,4.33361e-07)(5.09743,8.99871e-07)(5.35183,7.87036e-07)(5.60623,5.99743e-07)(5.86063,1.13867e-06)(6.11503,8.95586e-07)(6.36943,1.18287e-06)(6.62383,5.58382e-07)(6.87823,1.23017e-06)(7.13263,8.70751e-07)(7.38703,1.01032e-06)(7.64143,1.36081e-06)(7.89583,1.10868e-06)(8.15023,1.59034e-06)(8.40463,8.97872e-07)(8.65903,1.63069e-06)(8.91343,8.62928e-07)(9.16783,1.74174e-06)(9.42223,9.13978e-07)(9.67663,1.52592e-06)(9.93103,1.48565e-06)(10.1854,9.99629e-07)(10.4398,1.76496e-06)(10.6942,5.89415e-07)(10.9486,1.81967e-06)(11.203,1.41596e-06)(11.4574,1.38656e-06)(11.7118,1.99872e-06)(11.9662,9.02827e-07)(12.2206,2.36792e-06)(12.475,1.42626e-06)(12.7294,2.01427e-06)(12.9838,6.47492e-07)(13.2382,2.00536e-06)(13.4926,1.29271e-06)(13.747,1.63774e-06)(14.0014,1.80774e-06)(14.2558,9.90484e-07)(14.5102,2.18381e-06)(14.7646,1.00599e-06)(15.019,2.26608e-06)(15.2734,1.61847e-06)(15.5278,1.63951e-06)(15.7822,2.43132e-06)(16.0366,1.06e-06)(16.291,2.76522e-06)(16.5454,1.09777e-06)(16.7998,2.57676e-06)(17.0542,2.01227e-06)(17.3086,2.09511e-06)(17.563,2.92937e-06)(17.8174,9.83384e-07)(18.0718,8.53703e-07)(18.3262,2.17702e-06)(18.5806,1.53061e-06)(18.835,1.56752e-06)(19.0894,2.00298e-06)(19.3438,7.45166e-07)(19.5982,2.20622e-06)(19.8526,1.03844e-06)(20.107,1.8408e-06)(20.3614,1.5947e-06)(20.6158,1.16799e-06)(20.8702,2.11029e-06)(21.1246,7.39822e-07)(21.379,2.16257e-06)(21.6334,1.1482e-06)(21.8878,1.77911e-06)(22.1422,1.99713e-06)(22.3966,1.0908e-06)(22.651,2.40515e-06)(22.9054,7.18939e-07)(23.1598,2.42785e-06)(23.4142,1.5483e-06)(23.6686,1.93166e-06)(23.923,2.35615e-06)(24.1774,1.05027e-06)(24.4318,2.90018e-06)(24.6862,7.87569e-07)(24.9406,2.8777e-06)};

\legend{Nodal,Bernstein}
\end{semilogyaxis}
\end{tikzpicture}
}
\subfloat{
\begin{tikzpicture}
\begin{semilogyaxis}[
	legend cell align=left,
	width=.335\textwidth,
    title={$N = 7$},
    xlabel={Time},
    xmin=-.5, xmax=25.5,    
    xtick={0,5,10,15,20,25},
    ymin=5e-8,ymax=1e-5,
    legend pos=south east,
    xmajorgrids=true,
    ymajorgrids=true,
    grid style=dashed,
] 
\addplot+[color=blue,mark=none,mark options={fill=markercolor},semithick]
coordinates{(0.00942223,1.18969e-07)(0.263822,2.77767e-07)(0.518223,3.59959e-07)(0.772623,4.05725e-07)(1.02702,4.31285e-07)(1.28142,4.75894e-07)(1.53582,4.5325e-07)(1.79022,5.16143e-07)(2.04462,4.92953e-07)(2.29902,5.42498e-07)(2.55342,6.12257e-07)(2.80782,5.77949e-07)(3.06222,5.87402e-07)(3.31662,5.69701e-07)(3.57103,6.21047e-07)(3.82543,6.44097e-07)(4.07983,6.45598e-07)(4.33423,6.42979e-07)(4.58863,6.67263e-07)(4.84303,6.75895e-07)(5.09743,6.42488e-07)(5.35183,8.06589e-07)(5.60623,8.02611e-07)(5.86063,7.19631e-07)(6.11503,7.66199e-07)(6.36943,7.14729e-07)(6.62383,7.30389e-07)(6.87823,7.60222e-07)(7.13263,7.4791e-07)(7.38703,7.28401e-07)(7.64143,8.46617e-07)(7.89583,9.03316e-07)(8.15023,7.64806e-07)(8.40463,8.49642e-07)(8.65903,7.71403e-07)(8.91343,8.32256e-07)(9.16783,8.11927e-07)(9.42223,8.10728e-07)(9.67663,7.85968e-07)(9.93103,7.86787e-07)(10.1854,7.97513e-07)(10.4398,8.28264e-07)(10.6942,9.47881e-07)(10.9486,8.82563e-07)(11.203,1.33296e-06)(11.4574,1.06488e-06)(11.7118,1.07536e-06)(11.9662,1.13701e-06)(12.2206,8.91041e-07)(12.475,1.07126e-06)(12.7294,8.23971e-07)(12.9838,9.47341e-07)(13.2382,9.00958e-07)(13.4926,8.81191e-07)(13.747,9.73315e-07)(14.0014,9.0684e-07)(14.2558,8.86557e-07)(14.5102,9.4084e-07)(14.7646,8.3838e-07)(15.019,9.12886e-07)(15.2734,9.94088e-07)(15.5278,1.05844e-06)(15.7822,1.3637e-06)(16.0366,1.31046e-06)(16.291,1.08454e-06)(16.5454,1.3592e-06)(16.7998,9.54759e-07)(17.0542,1.18105e-06)(17.3086,9.60576e-07)(17.563,1.02685e-06)(17.8174,1.01463e-06)(18.0718,1.02155e-06)(18.3262,1.04755e-06)(18.5806,1.07173e-06)(18.835,9.83862e-07)(19.0894,1.03728e-06)(19.3438,9.79466e-07)(19.5982,9.21262e-07)(19.8526,1.05401e-06)(20.107,9.61514e-07)(20.3614,1.15439e-06)(20.6158,1.13152e-06)(20.8702,1.22349e-06)(21.1246,1.23295e-06)(21.379,1.11801e-06)(21.6334,1.40854e-06)(21.8878,9.43115e-07)(22.1422,2.32792e-06)(22.3966,1.96131e-06)(22.651,1.66291e-06)(22.9054,2.30348e-06)(23.1598,1.18047e-06)(23.4142,2.30781e-06)(23.6686,1.1242e-06)(23.923,2.00211e-06)(24.1774,1.43003e-06)(24.4318,1.5372e-06)(24.6862,1.70213e-06)(24.9406,1.21646e-06)};

\addplot+[color=red,mark=none,semithick,mark options={fill=markercolor}]
coordinates{(0.00942223,2.20193e-07)(0.263822,1.61922e-07)(0.518223,2.10928e-07)(0.772623,1.90155e-07)(1.02702,1.86747e-07)(1.28142,2.17112e-07)(1.53582,1.97358e-07)(1.79022,2.29712e-07)(2.04462,2.1406e-07)(2.29902,2.63323e-07)(2.55342,2.59024e-07)(2.80782,2.83009e-07)(3.06222,2.69977e-07)(3.31662,2.48711e-07)(3.57103,3.5815e-07)(3.82543,2.61698e-07)(4.07983,2.98939e-07)(4.33423,2.79248e-07)(4.58863,3.02818e-07)(4.84303,4.56847e-07)(5.09743,2.99679e-07)(5.35183,3.38105e-07)(5.60623,4.00296e-07)(5.86063,2.99816e-07)(6.11503,2.99638e-07)(6.36943,3.32252e-07)(6.62383,3.51669e-07)(6.87823,3.16237e-07)(7.13263,5.2494e-07)(7.38703,3.39399e-07)(7.64143,3.36897e-07)(7.89583,4.07952e-07)(8.15023,3.5457e-07)(8.40463,3.20798e-07)(8.65903,3.73246e-07)(8.91343,4.33288e-07)(9.16783,3.31251e-07)(9.42223,5.74808e-07)(9.67663,4.11773e-07)(9.93103,6.30739e-07)(10.1854,6.95452e-07)(10.4398,5.3708e-07)(10.6942,1.02049e-06)(10.9486,4.49329e-07)(11.203,6.78877e-07)(11.4574,5.89683e-07)(11.7118,4.23455e-07)(11.9662,5.88911e-07)(12.2206,3.7196e-07)(12.475,4.44646e-07)(12.7294,4.5932e-07)(12.9838,3.97968e-07)(13.2382,4.3413e-07)(13.4926,5.4413e-07)(13.747,4.16291e-07)(14.0014,6.56205e-07)(14.2558,5.44616e-07)(14.5102,6.85559e-07)(14.7646,8.21004e-07)(15.019,5.58738e-07)(15.2734,1.16051e-06)(15.5278,6.5633e-07)(15.7822,5.26931e-07)(16.0366,6.84698e-07)(16.291,4.3554e-07)(16.5454,5.72016e-07)(16.7998,5.35723e-07)(17.0542,4.01191e-07)(17.3086,5.83645e-07)(17.563,5.16393e-07)(17.8174,4.76094e-07)(18.0718,7.11568e-07)(18.3262,4.59762e-07)(18.5806,7.97498e-07)(18.835,6.5153e-07)(19.0894,7.53359e-07)(19.3438,1.00457e-06)(19.5982,4.95595e-07)(19.8526,1.30958e-06)(20.107,5.17539e-07)(20.3614,1.37688e-06)(20.6158,1.03074e-06)(20.8702,1.17885e-06)(21.1246,1.54962e-06)(21.379,7.99616e-07)(21.6334,1.96826e-06)(21.8878,4.47481e-07)(22.1422,1.30195e-06)(22.3966,1.31541e-06)(22.651,7.44483e-07)(22.9054,1.37148e-06)(23.1598,4.62151e-07)(23.4142,1.20753e-06)(23.6686,7.15283e-07)(23.923,8.99061e-07)(24.1774,8.75244e-07)(24.4318,5.76796e-07)(24.6862,8.70818e-07)(24.9406,4.92085e-07)};

\legend{Nodal,Bernstein}
\end{semilogyaxis}
\end{tikzpicture}
}
\caption{$L^2$ errors over time for both nodal and Bernstein-Bezier bases.  Both $N$ and the number of elements are taken to be sufficiently large such that the approximation error is at machine (single) precision.}
\label{fig:errOverTime}
\end{figure}
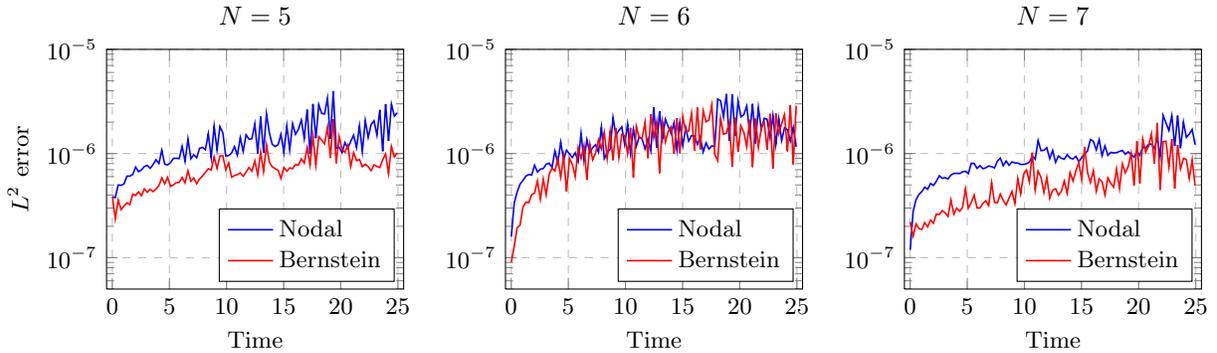


\subsection{Computational implementation}
\label{sec:comp}

The computational work involved in a time-explicit upwind DG solver consists of the evaluation of the right hand side and the solution update.  The implementation of such solvers on GPUs typically divides this work into three relevant kernels
\begin{itemize}
\item A volume kernel, which computes contributions to the right hand side resulting from volumetric derivative terms in the discrete formulation (\ref{eq:discrete_var}).
\item A surface kernel, which computes both numerical fluxes and contributions to the right hand side resulting from trace terms in the discrete formulation (\ref{eq:discrete_var}).  
\item An update kernel, which updates the solution in time. 
\end{itemize}
For the update kernel, we use a low-storage 4th order Runge-Kutta method \cite{carpenter1994fourth} in this work, though any standard explicit time marching scheme may be used.  Since the structure of the update kernel is identical for both nodal and Bernstein polynomials, we do not analyze its performance in detail here.  We note, however, that a different choice of time marching scheme will change the runtime of the update step, and therefore impact any speedups reported in the total runtime.  

In the following sections, we compare the Bernstein volume and surface kernels to reference nodal DG volume and surface kernels.  Common to these kernel are computational parameters $K_V, K_S$ and $K_U$, which refer to the number of elements processed by each thread block.  By tuning these parameters, the number of threads can be made close to a multiple of 32, the number of concurrently processed threads per thread block \cite{klockner2009nodal}.  For all reported results, the block sizes $K_V, K_S$, and $K_U$ have been optimized to minimize runtime.  We also include a comparison between highly optimized nodal DG volume and surface kernels based on blocked matrix multiplication, using strategies adopted in GPU implementations of BLAS routines \cite{golub2012matrix, nath2010improved, nvidia-cublas}.  Due to the partitioning of computational work for each strategy, we will refer to the former implementation as ``node-per-thread'' and the latter as ``element-per-thread''  


\subsubsection{Nodal DG volume and surface kernels}

The implementation of the node-per-thread (NPT) reference volume kernel and surface kernel for nodal DG follows \cite{klockner2009nodal}.  The nodal volume kernel loads three dense differentiation matrices progressively from global device memory to registers via L2/L1 caches. Each thread computes the derivative at a single node, calculating the inner product of a row of each differentiation matrix with the nodal element solution vectors. This row wise reduction typically involves a loop carried dependency of the serial thread reduction, with instruction level parallelism limited to the number of independent fields being differentiated.  The surface kernel retrieves neighboring data on shared faces and computes numerical fluxes.  Similarly to the volume kernel, the surface kernel loads the lift matrix progressively from global memory and applies it to the elemental vector of nodal numerical flux values in an analogous manner to the application of derivative matrices.  

In comparison, the element-per-thread (EPT) kernels use a divide and conquer strategy inspired by GPU implementations of the BLAS matrix-multiplication subroutine \verb+sgemm+ \cite{nvidia-cublas} by effectively extending the one-element-per thread strategy described by Fuhry et\ al.\ in \cite{fuhry2014discontinuous}. In this version of the kernel, the work is partitioned into blockwise matrix-multiplications with the sub-blocks of local matrices staged in shared memory.  Each thread is responsible for applying these local blocks to all (volume or surface) nodes in a single element. To maximize throughput, each thread loads a single nodal value for each field at a time, then increments the derivatives or lifted values at a subset of output nodes in the element.  The results are then staged in an array of thread-local registers.  Block partitioning the matrix-matrix multiplication does require more loads than the NPT algorithm; however, this is offset by the reuse of the manually cached blocks of each matrix for multiple elements.  A significant advantage of this one thread per element approach is that the number of threads can be tuned to exactly match the number of SIMT lanes in the wide vector processing GPU cores. On the Nvidia GTX 980 GPU used for computational tests in this paper, the SIMD width is 32, and since the differentiation matrices are manually cached in shared memory, they are guaranteed to be reused at least 32 times, compared to a much smaller number of times in the NPT nodal volume kernel.  Additionally, by matching the number of collaborative threads to the number of SIMD lanes we do not need to introduce local memory fences when the threads collaboratively load sub-blocks of the differentiation matrices.  Finally, we note that while the behavior of the EPT strategy is similar to that of a DG strategy using CUBLAS \cite{axelGPU2015}, we have observed that CUBLAS is a constant factor faster for all orders tested, though EPT uses less global memory.  

Due to the data layout of the EPT nodal DG kernels, the work of the surface kernel is separated into a ``Lift'' kernel (which applies the lift matrix in a blocked fashion) and an additional ``Flux'' kernel (which computes numerical fluxes and writes them to global memory).  The update kernel is taken to be the same between the NPT nodal, EPT nodal, and Bernstein-Bezier DG kernels.  We note that alternative implementations, such as the fusing of different kernels together, may yield additional speedup.  

\subsubsection{Bernstein-Bezier volume and surface kernels}

Specialized volume and surface kernels must be constructed to efficiently utilize the properties of Bernstein polynomials given in Lemma~\ref{lemma:lemma1} and Section~\ref{sec:lmat}.  While we use a node-per-thread data layout for the Bernstein-Bezier kernels, the application of the derivative and lift matrices is done in a sparse manner.   The implementation of the sparse Bernstein-Bezier volume and lift kernels are described in more detail in Algorithms~\ref{bdgV} and \ref{bdgS}.  The lift matrix may also be applied using the optimal complexity approach outlined in Algorithm~\ref{alg:slice_lift}, which is referred to as the \textit{optimal} Bernstein surface kernel for the remainder of this work.  

 \begin{algorithm}
\caption{Bernstein-Bezier DG volume kernel} 
\label{bdgV} 
\begin{algorithmic}[1]
\ParFor{each element $D^k$} 
\ParFor{volume degrees of freedom $i = 1,\ldots, N_p$} 
\State Load geometric factors and solution variables from global memory into shared memory.
\EndParFor
\EndParFor

\ParFor{each element $D^k$} 
\ParFor{volume degrees of freedom $i = 1,\ldots, N_p$} 
\State Load the four nonzero entries of the $i$th row of the derivative matrices.
\For{$j = 1,\ldots,4$} 
\State Load the indices $c^0_j,\ldots, c^3_j$ such that $(\bm{D}^k)_{i,c^k_j} \neq 0$ for $k = 0,\ldots,d$.
\State Compute expansion coefficients $\LRp{\bm{p}^k}_i$ of solution barycentric derivatives using $\bm{D}^0,\ldots, \bm{D}^3$
\[
\LRp{\bm{p}^k}_i = \LRp{\bm{p}^k}_i + \bm{D}^k_{i,c^k_j} \bm{p}_{c^k_j}. 
\]
\EndFor
\State Compute derivatives with respect to reference coordinates using the chain rule
\[
\pd{p}{r}{} = \frac{1}{2}\LRp{\pd{p}{\lambda_1}{}-\pd{p}{\lambda_0}{}}, \qquad 
\pd{p}{s}{} = \frac{1}{2}\LRp{\pd{p}{\lambda_2}{}-\pd{p}{\lambda_0}{}}, \qquad
\pd{p}{t}{} = \frac{1}{2}\LRp{\pd{p}{\lambda_3}{}-\pd{p}{\lambda_0}{}}.
\]
\State Assemble right hand side contributions and write to global memory.
\EndParFor
\EndParFor
\end{algorithmic}
\end{algorithm}
\begin{algorithm}
\caption{Bernstein-Bezier DG surface kernel (non-optimal)} 
\label{bdgS} 
\begin{algorithmic}[1]
\ParFor{each element $D^k$} 
\For{faces $f = 1,\ldots,4$}
\ParFor{trace degrees of freedom $i = 1,\ldots, N^f_p$} 
\State Load surface normals, Jacobian scalings, and solution traces from global memory.
\State Compute coefficients $\bm{p}^f_{i}$ of the numerical flux for face $f$. 
\State Store geometric factors/fluxes in shared memory.
\EndParFor
\EndFor
\EndParFor

\ParFor{each element $D^k$} 
\For{faces $f = 1,\ldots,4$}
\ParFor{trace degrees of freedom $i = 1,\ldots, N^f_p$} 
\For{$j = 1,\ldots, 7$}
\State Load values and column indices $c_j$ where $(\bm{L}_0)_{i,c_j}\neq 0$, apply $\bm{L}_0$ to $\bm{p}^f$
\[
\LRp{\bm{p}_L^f}_{i} = \LRp{\bm{p}_L^f}_{i} + \LRp{\bm{L}_0}_{i,c_j} \bm{p}^f_{c_j}
\]
\EndFor
\State Store $\LRp{\bm{p}^f_L}_i$ in shared memory.  
\EndParFor
\EndFor
\EndParFor

\ParFor{each element $D^k$} 
\ParFor{volume degrees of freedom $i = 1,\ldots, N_p$} 
\For{faces $f = 1,\ldots,4$}
\For{$j = 1,\ldots,N^f_p + 3$}
\State Load values and column indices $c_j$ where $\LRp{\bm{E}^f_L}_{i,c_j}\neq 0$, apply $\bm{E}^f_L$ to $\bm{p}_L^f$
\[
\bm{R}_i = \bm{R}_i + \LRp{\bm{E}^f_L}_{i,c_j} \LRp{\bm{p}_L^f}_{c_j}
\]
\EndFor
\EndFor
\State Accumulate right hand side contributions $\bm{R}_i$ in global memory.
\EndParFor
\EndParFor

\end{algorithmic}
\end{algorithm}

We experimented with several different ways to apply the Bernstein derivative and lift matrices.  The most straightforward approach is to store column indices for each row, which may then be loaded in a data-parallel manner over threads.  Each thread computes the dot product of a sparse row of a derivative matrix with the local solution vector.  For each row, four column indices for each derivative matrix (with a total of four derivatives with respect to barycentric coordinates) are loaded, as well as a single set of four floating point numbers containing the values of the derivative matrices (which are identical across all derivatives).  The implementation of the surface kernel also followed this pattern.  This compressed storage significantly improves performance by decreasing the required data movement to apply local matrices.  Additionally, indices were packed into 128-bit \verb+int4+ and \verb+float4+ arrays to minimize the number of required memory transactions.  

For the volume kernel, since the sparsity pattern of the derivative operators is explicitly known, another option is to simply store the barycentric tuple $(i,j,k,l)$ for each row of the matrix and determine the four non-zero column indices based on known formulas.  This may be additionally compressed by noting that $i,j,k,l \leq N$.  For most reasonable values of $N$, these four indices may be bitmasked and stored within the bits of a single integer.  This compresses the representation of the derivative matrix to the storage of a single integer array of size $N_p$, which is then unpacked and used to determine non-zero column indices and values of derivative matrices.  However, we found in practice that this resulted in a slower application of the derivative matrix than simply loading four column indices per row of $\bm{D}^i$, possibly due to extra integer arithmetic in determining non-zero column indices.  

Finally, while a node-per-thread strategy has been adopted for the Bernstein-Bezier basis, it is also possible to implement the application of derivative and lift matrices using an element-per-thread strategy.  However, due to the sparsity of the Bernstein derivative and lift matrices, it is unclear how to implement the block-partitioning used in the element-per-thread strategy.  We therefore avoided the use of block-partitioning when constructing element-per-thread versions of Bernstein-Bezier kernels.  Since the element-per-thread version of the Bernstein-Bezier volume kernel is more straightforward to extrapolate, we discuss an element-per-thread implementation of the Bernstein-Bezier lift kernel, which applies the lift reduction matrix $\bm{E}_L$.  

We experimented with the application of the lift reduction matrix $\bm{E}_L$ as both a sparse matrix and as the product of degree reduction operators, as discussed in Section~\ref{sec:lmat}.  Under an element-per-thread parallelization, the latter strategy becomes more computationally attractive, since consecutive one-degree reductions can be done on each thread without requiring synchronizations.  By using a size $N^f_p$ thread-local register array and applying the lift reduction matrix face-by-face, it possible to perform in-place applications of degree reductions using three nested loops.  Additionally, recalling that the face reduction matrices $\bm{E}^1_f,\ldots,\bm{E}^3_L$ are row permutations of $\bm{E}^0_L$, these permutations can be explicitly determined by looping over appropriate barycentric indices when writing out to global memory.  Unfortunately, despite these optimizations, both element-per-thread implementations of the lift reduction matrix proved much slower than the blocked element-per-thread nodal DG lift kernel at higher $N$ and did not yield significant speedup at lower $N$, possibly due to the heavy use of register memory.  We will explore more efficient element-per-thread implementations of Bernstein-Bezier kernels in future work.  

\subsection{Runtime comparisons}
\label{sec:runtime}

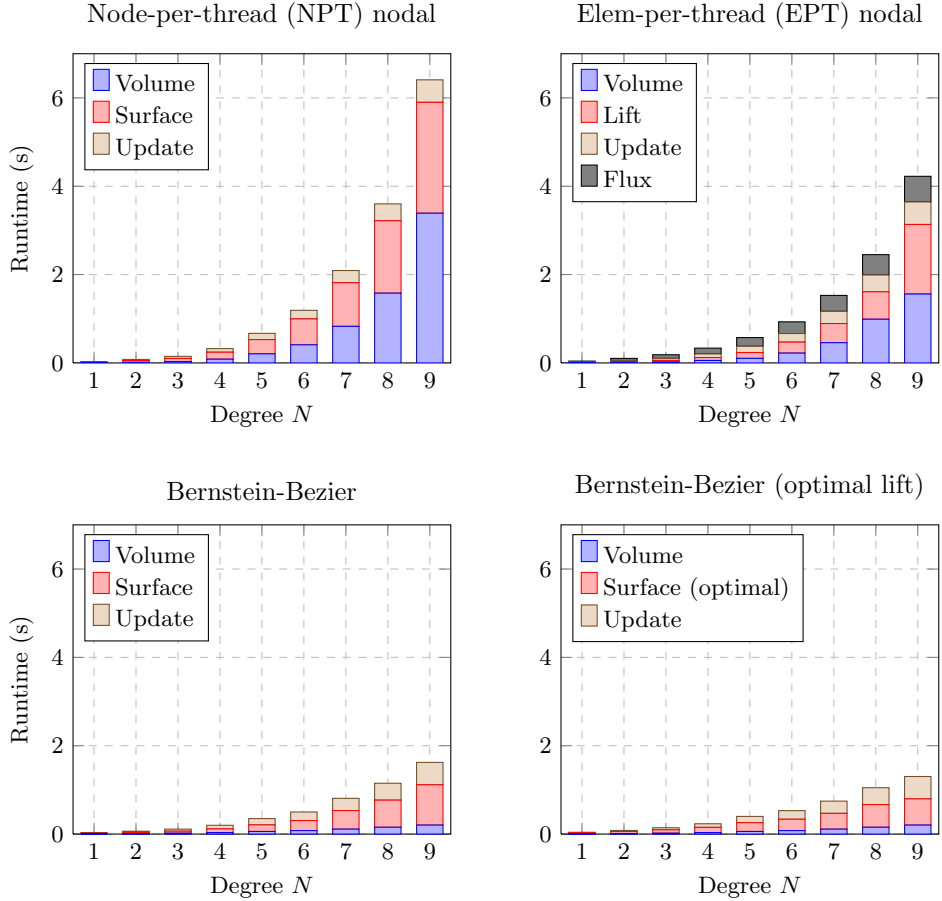
\begin{figure}
\centering
\subfloat{
\begin{tikzpicture}
\begin{axis}[
	width=.4\textwidth,
	legend cell align=left,
	title={Node-per-thread (NPT) nodal},
	xlabel={Degree $N$},
	ylabel={Runtime (s)},
	xmin=.5, xmax=9.5,
	ymin=0,ymax=7,
	ybar stacked,
	xtick={1,2,3,4,5,6,7,8,9},
	legend pos=north west,
	xmajorgrids=true,
	ymajorgrids=true,
	grid style=dashed,
] 
\addplot coordinates{(1,0.00529)(2,0.0135)(3,0.0302)(4,0.0854)(5,0.206)(6,0.41)(7,0.829)(8,1.58)(9,3.39)};
\addplot coordinates{(1,0.0121)(2,0.0403)(3,0.0722)(4,0.158)(5,0.322)(6,0.587)(7,0.987)(8,1.64)(9,2.51)};
\addplot coordinates{(1,0.009394)(2,0.025607)(3,0.046205)(4,0.080841)(5,0.142045)(6,0.19389)(7,0.27628)(8,0.381)(9,0.5088)};

\legend{Volume, Surface, Update}

\end{axis}
\end{tikzpicture}
}
\hspace{2em}
\subfloat{
\begin{tikzpicture}
\begin{axis}[
	width=.4\textwidth,
	legend cell align=left,
	title={Elem-per-thread (EPT) nodal},
	xlabel={Degree $N$},
	xmin=.5, xmax=9.5,
	ymin=0,ymax=7,
	ybar stacked,
	xtick={1,2,3,4,5,6,7,8,9},
	legend pos=north west,
	xmajorgrids=true,
	ymajorgrids=true,
	grid style=dashed,
] 
\addplot 
coordinates{(1,0.005)(2,0.0107)(3,0.0205)(4,0.056)(5,0.104)(6,0.223)(7,0.4585)(8,0.99)(9,1.56)};
\addplot 
coordinates{(1,0.00935)(2,0.02035)(3,0.03685)(4,0.0665)(5,0.1315)(6,0.2485)(7,0.4335)(8,0.62)(9,1.575)};
\addplot 
coordinates{(1,0.009394)(2,0.025607)(3,0.046205)(4,0.080841)(5,0.142045)(6,0.19389)(7,0.27628)(8,0.381)(9,0.5088)};
\addplot 
coordinates{(1,0.0182)(2,0.0461)(3,0.08203)(4,0.13225)(5,0.1964)(6,0.2651)(7,0.3603)(8,0.461)(9,0.5814)};
\legend{Volume, Lift, Update, Flux}

\end{axis}
\end{tikzpicture}
}
\\
\subfloat{
\begin{tikzpicture}
\begin{axis}[
	width=.4\textwidth,
	legend cell align=left,
	title={Bernstein-Bezier},
	xlabel={Degree $N$},
	ylabel={Runtime (s)},
	xmin=.5, xmax=9.5,
	ymin=0,ymax=7,
	ybar stacked,
	xtick={1,2,3,4,5,6,7,8,9},
	legend pos=north west,
	xmajorgrids=true,
	ymajorgrids=true,
	grid style=dashed,
] 
\addplot 
coordinates{(1,0.00564)(2,0.0119)(3,0.0203)(4,0.034)(5,0.0593)(6,0.0791)(7,0.112)(8,0.155)(9,0.204)};
\addplot 
coordinates{(1,0.0148)(2,0.0276)(3,0.0459)(4,0.0842)(5,0.149)(6,0.227)(7,0.42)(8,0.614)(9,0.912)};
\addplot 
coordinates{(1,0.0093767)(2,0.0255921)(3,0.04623)(4,0.081)(5,0.14229)(6,0.194133)(7,0.277041)(8,0.38163)(9,0.50694)};

\legend{Volume, Surface, Update}

\end{axis}
\end{tikzpicture}
}
\hspace{2em}
\subfloat{
\begin{tikzpicture}
\begin{axis}[
	width=.4\textwidth,
	legend cell align=left,
	title={Bernstein-Bezier (optimal lift)},
	xlabel={Degree $N$},
	xmin=.5, xmax=9.5,
	ymin=0,ymax=7,
	ybar stacked,
	xtick={1,2,3,4,5,6,7,8,9},
	legend pos=north west,
	xmajorgrids=true,
	ymajorgrids=true,
	grid style=dashed,
] 
\addplot 
coordinates{(1,0.00564)(2,0.0119)(3,0.0203)(4,0.034)(5,0.0593)(6,0.0791)(7,0.112)(8,0.155)(9,0.204)};
\addplot 
coordinates{(1,0.026)(2,0.0455)(3,0.0771)(4,0.1182)(5,0.1983)(6,0.2573)(7,0.3572)(8,0.5135)(9,0.5926)};
\addplot 
coordinates{(1,0.0093767)(2,0.0255921)(3,0.04623)(4,0.081)(5,0.14229)(6,0.194133)(7,0.277041)(8,0.38163)(9,0.50694)};

\legend{Volume, Surface (optimal), Update}

\end{axis}
\end{tikzpicture}
}

\caption{Per-kernel runtimes for nodal and Bernstein-Bezier bases.  Runtimes are recorded for ten RK4 timesteps on a mesh of 98304 elements.}
\label{fig:stacks}
\end{figure}

Due to optimizations which take advantage of sparsity, the Bernstein-Bezier volume and surface kernels introduce some overhead compared to the volume and surface kernels for a nodal basis.  As a result, the runtime of the Bernstein volume and surface kernels is slower at $N=1$.  However, as noted previously, the Bernstein-Bezier basis may also utilize nodal volume and surface kernels due to each Bernstein-Bezier basis function being associated with an equispaced node on the tetrahedron.  Thus, Bernstein polynomials can always be made to perform \textit{at least} as well as nodal polynomials for any order of approximation.  

We run timing tests by performing fifty right hand side evaluations (ten timesteps using $4$th order RK) on a mesh of size $K = 98304$ for orders of approximation $N = 1,\ldots,9$.  Computational parameters are optimized for each different value of $N$.  Figure~\ref{fig:stacks} shows a kernel-by-kernel breakdown of the total runtime, and Figure~\ref{fig:speedup} shows the achieved speedup (relative to NPT and EPT nodal kernels) attained by using a Bernstein-Bezier basis.  We note that we have only covered a small number of implementations of the reference nodal kernels.  Other implementations can also be adopted to improve performance at higher orders.  For example, the use of an optimized matrix-multiplication library is observed to achieve a total speedup of $1.6\times$ for $N=6$ to $N=8$ \cite{axelGPU2015}.  

\pgfplotstableread[col sep=space]{
N V S T L TL
1   9.3794e-01   8.1757e-01   8.9829e-01     0.4654    0.6530    
2   1.1345e+00   1.4601e+00   1.2199e+00  0.8857    0.9568    
3   1.4877e+00   1.5730e+00   1.3218e+00  0.9364    1.0346    
4   2.5118e+00   1.8765e+00   1.6277e+00  1.3367    1.3904    
5   3.4739e+00   2.1611e+00   1.9112e+00  1.6238    1.6756    
6   5.1833e+00   2.5859e+00   2.3807e+00  2.2814    2.2447    
7   7.4018e+00   2.3500e+00   2.5861e+00  2.7632    2.8038    
8   1.0194e+01   2.6710e+00   3.1296e+00  3.1938    3.4291    
9   1.6618e+01   2.7522e+00   3.9489e+00  4.2356    4.9165
      }\runtimeNaive
   
\pgfplotstableread[col sep=space]{
N V S T L TL
1    0.8865    1.8615    0.8886  1.0596    0.6459    
2    0.8992    2.4076    1.1769  1.4604    0.9231    
3    1.0099    2.5900    1.2355  1.5419    0.9671    
4    1.6471    2.3605    1.4801  1.6815    1.2643    
5    1.7538    2.2007    1.6203  1.6536    1.4205    
6    2.8192    2.2626    2.0068  1.9961    1.8922    
7    4.0938    1.8900    2.1282  2.2223    2.3073    
8    6.3871    1.7606    2.6168  2.1052    2.8673    
9    7.6471    2.3645    2.8213  3.6389    3.5126
}\runtimeBlocked
   
\begin{figure}
\centering
\subfloat{
\begin{tikzpicture}
\begin{axis}[
	width=.475\textwidth,
	legend cell align=left,
	title={Speedup of Bernstein over NPT nodal DG},
	xlabel={Degree $N$},
	ylabel={Time (NPT)/Time (Bernstein)},
	xmin=.5, xmax=9.5,
	ymin=0,ymax=17,	
        ybar=2*\pgflinewidth,
    bar width=2.5pt,
	xtick={1,2,3,4,5,6,7,8,9},
	ymin=0,
	legend pos=north west,
	ymajorgrids=true,
	grid style=dashed,
] 
\addplot table[x=N, y=V] from \runtimeNaive;
\addplot table[x=N, y=S] from \runtimeNaive;
\addplot table[x=N, y=T] from \runtimeNaive;
\addplot table[x=N, y=L] from \runtimeNaive;
\addplot table[x=N, y=TL] from \runtimeNaive;
\addplot+[draw=black,line legend, very thick,smooth,dashed] coordinates{(0,1)(10,1)};

\legend{Volume, Surface, Total,  Surface (optimal), Total (optimal), Reference (no speedup)}
\end{axis}
\end{tikzpicture}
}
\subfloat{
\begin{tikzpicture}
\begin{axis}[
	width=.475\textwidth,
	legend cell align=left,
	xlabel={Degree $N$},
	ylabel={Time (EPT)/Time (Bernstein)},
	title={Speedup of Bernstein over EPT nodal DG},
	xmin=.5, xmax=9.5,
	ymin=0,ymax=17,
        ybar=2*\pgflinewidth,
    bar width=2.5pt,
	xtick={1,2,3,4,5,6,7,8,9},
	ymin=0,
	legend pos=north west,
	ymajorgrids=true,
	grid style=dashed,
] 
\addplot table[x=N, y=V] from \runtimeBlocked;
\addplot table[x=N, y=S] from \runtimeBlocked;
\addplot table[x=N, y=T] from \runtimeBlocked;
\addplot table[x=N, y=L] from \runtimeBlocked;
\addplot table[x=N, y=TL] from \runtimeBlocked;
\addplot+[draw=black,line legend, very thick,smooth,dashed] coordinates{(0,1)(10,1)};
\legend{Volume, Surface (Lift + Flux), Total, Surface (optimal), Total (optimal), Reference (no speedup)}
\end{axis}
\end{tikzpicture}
}
\caption{Ratio of runtimes of volume/surface kernels and total right hand side evaluation achieved by using a Bernstein-Bezier basis instead of nodal polynomials.}
\label{fig:speedup}
\end{figure}
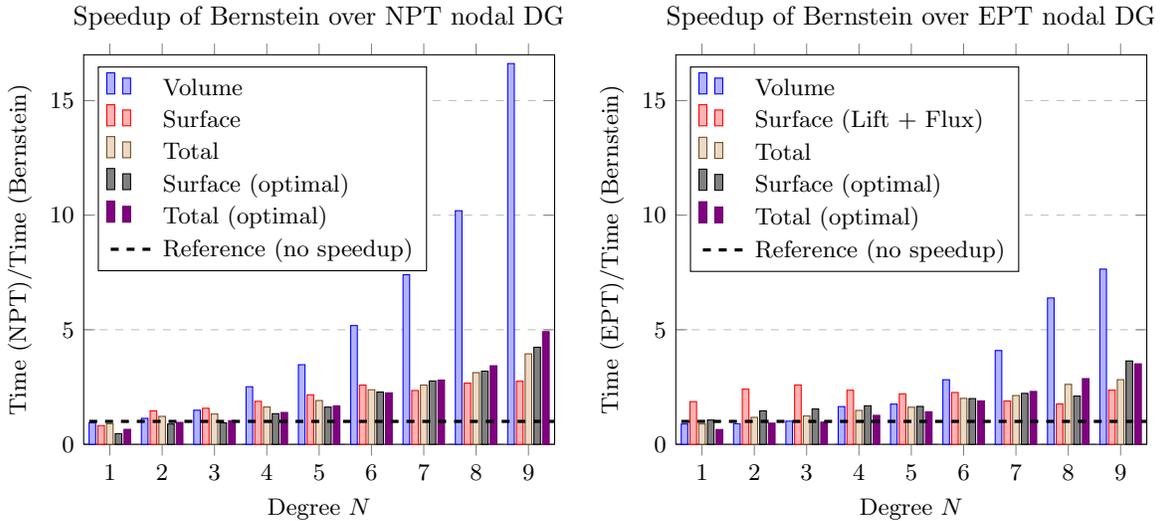


The throughput performance of the Bernstein-Bezier volume kernel compares very favorably with both the NPT and EPT nodal volume kernels.  In comparison to the NPT nodal kernels, the Bernstein volume kernel achieves $2.5\times$ speedup at $N=4$ and increases to over $16\times$ speedup at $N=9$.  In comparison to the EPT nodal kernels, the Bernstein volume kernel achieves a $1.6\times$ speedup at $N=4$, which increases to a $7.6\times$ speedup at $N=9$.  At medium to high order, the sparsity of the Bernstein-Bezier differentiation matrices frees up considerable L1/L2 cache space.  This advantageous because, while the L1/L2 caching of derivative matrices in the NPT nodal volume kernel may improve performance, GPU cache latency far exceeds that of shared and register memory \cite{fatahalian2004understanding, volkov2008benchmarking}.  Additionally, the multiplication of each row entry with the corresponding degrees of freedom can be done with a fixed number of floating point operations using a limited number of registers, and the accumulation of these multiplications can be scheduled to take advantage of instruction level parallelism.  Finally, the reduced storage costs for sparse derivative matrices also increases occupancy by decreasing register pressure compared to the NPT and EPT nodal volume kernels.  

The surface kernel speedups are more modest.  At degrees $N = 4$ and $N=5$, the Bernstein surface kernel is roughly two times faster than the NPT nodal surface kernel.  This improves as $N$ increases, reaching a $2.75 \times$ speedup at $N=9$.  In comparison to the blocked EPT nodal surface kernel (counting both kernel runtimes for the computational of the numerical flux and application of the lift operator), the performance of the Bernstein surface kernel is roughly $2\times$ faster from $N=2$ to $N=6$.  For $N>6$, the Bernstein surface kernel is only around $1.6\times$ faster than the EPT nodal surface kernel.  This may be due in part to the separation of the flux computation from the lift application, which introduces additional global memory transactions.  

Asymptotically, the Bernstein surface kernel is only 4 times faster than the nodal surface kernel, since the cost of applying $\bm{E}_L$ involves multiplication by a sparse matrix with approximately $N^f_p$ entries per row.  In comparison, the lift matrix $\bm{L}$ contains $4N^f_p$ entries per row.  This asymptotic speedup is improved by utilizing the optimal Bernstein surface kernel, which results in an $O(N^3)$ cost in three dimensions.  Since the complexity of the nodal surface kernel is $O(N^{5})$ in three dimensions, the speedup of the optimal Bernstein surface kernel over the nodal surface kernel does not asymptote as $N$ increases.

The total runtime of all the Bernstein kernels (volume, surface, update) involved in one right hand side evaluation (volume, surface, and update) is improved at higher orders as well.  At $N=5$, one Bernstein right hand side is roughly twice as fast as a nodal right hand side evaluation.  At $N = 8$, the Bernstein right hand side evaluation is $3.13\times$ faster than the nodal right hand side evaluation, while at $N=9$, the Bernstein evaluation is roughly $4\times$ faster than the nodal evaluation.  If the optimal Bernstein surface kernel is used, the total Bernstein right hand side evaluation becomes $5\times$ faster than the total nodal right hand side evaluation.  

If Bernstein total runtime is compared to the total runtime of the EPT nodal kernels, the speedup is more modest, achieving $2\times$ speedup $N=6$ and $2.8\times$ speedup at $N=9$.  The speedup of the total Bernstein runtimes over the total EPT nodal runtime improves to $3.5\times$ at $N=9$ if the optimal Bernstein surface kernel is used.


\subsection{Performance and roofline analysis}
\label{sec:roofline}

In this section, we present results which quantify the difference in computational performance between implementations of nodal and Bernstein polynomials.  All results were run on an Nvidia GTX 980, and the solvers was implemented in the Open Concurrent Compute Abstraction framework (OCCA) \cite{medina2014occa,medina2015okl} for clarity and portability.  

Figures~\ref{fig:gflops} and \ref{fig:bw} show the profiled computational performance and bandwidth of the nodal NPT, nodal EPT, and Bernstein kernels.  The two implementations of nodal DG kernels exhibit very different performance characteristics.  NPT nodal volume and surface kernels achieve performance of roughly a teraflop for $N>4$, while the bandwidth of both kernels decreases further and further as $N$ increases.  In comparison, the EPT nodal kernels achieve around twice the computational performance of the NPT nodal kernels (achieving and maintaining close to two teraflops for $N>5$) by trading higher effective bandwidth usage for improved throughput.  The bandwidth of the EPT nodal kernels is also improved at higher orders of approximation, though this is also in part due to the redundant loading of nodal solution values.  However, the performance of both kernels are limited by their extensive use of shared memory or memory caches for staging matrix blocks or solution nodal values.  

In comparison to the NPT nodal kernels, the Bernstein kernels show lower computational performance and improved throughput at high orders of approximation.  The volume kernel achieves a lower performance of around $600$ GFLOPS/s at all orders, but the bandwidth does not degrade as $N$ increases, remaining steady near the peak bandwidth.  The surface kernel behaves similarly, showing decreased computational performance with increased throughput, though the bandwidth still decreases at sufficiently high $N$.   In comparison, the optimal Bernstein surface kernel sustains near-constant GFLOPS/s and bandwidth as $N$ increases, though at a much lower percentage of the peak performance.  

\pgfplotstableread[col sep=space]{
N V S F
1    0.3067    0.2019    0.0584
 2   0.6890    0.4637    0.0461
3    1.2947    0.8537    0.0432
4    1.3824    1.2417    0.0402
5    1.8976    1.4066    0.0379
6    1.9774    1.4887    0.0374
7    1.9683    1.5675    0.0354
8    1.7695    1.8836    0.0346
9    1.9443    1.2083    0.0335
}\GFLOPSBlockNodal

\pgfplotstableread[col sep=space]{
N V S F
1 167.0430  172.5100  133.3820
 2 166.2430  172.1310  142.1890
3  164.0040  172.0910  138.8410
4  111.7100  160.8480  127.9770
5  131.0690  169.5880  123.7740
6  121.5190  131.0820  127.9820
7  124.2650  134.2170  118.5400
8  157.7240  157.9600  115.3520
9  122.9810  122.0410  115.9380
}\BWBlockNodal

\pgfplotstableread[col sep=space]{
N V S
1  0.3048    0.2552
 2   0.5607    0.3123
  3  0.8919    0.5365
 4   0.9145    0.6203
 5   0.9433    0.6689
 6   1.0493    0.7259
 7   1.0487    0.7876
 8   1.0338    0.8112
 9   0.8529    0.8612
 }\GFLOPSNodal

\pgfplotstableread[col sep=space]{
N V Vopt S
1  1.6440e+02   1.6632e+02   1.3286e+02
2   1.5096e+02   1.6659e+02   9.4100e+01
3   1.0639e+02   1.6105e+02   8.0230e+01
4   6.8770e+01   9.6448e+01   6.6073e+01
 5  4.7656e+01   1.2547e+02   5.3975e+01
6   3.3726e+01   1.1956e+02   3.9934e+01
7   2.3302e+01   1.1047e+02   3.2720e+01
8   1.7389e+01   1.0452e+02   2.6895e+01
9   1.0300e+01   1.4811e+02   2.2290e+01
   }\BWNodal
   
\pgfplotstableread[col sep=space]{
N V S L
1    0.4218    0.1847   0.0961    
2    0.4998    0.3181   0.1574    
3    0.5860    0.4434   0.1810    
4    0.6122    0.4898   0.1930    
5    0.5616    0.5256   0.1742    
6    0.6316    0.6190   0.1926    
7    0.6372    0.5710   0.1909    
8    0.6331    0.6379   0.1769    
9    0.6414    0.6747   0.1989
}\GFLOPSBern

\pgfplotstableread[col sep=space]{
N V S L
1  156.0280  108.2120   12.9060
2  167.7100  132.0770   78.4900
3  167.4290  133.0320   79.5690
4  168.5340  118.4780   93.4570
5  168.7020  118.7510   83.6790
6  170.3240  104.9450   95.8590
7  170.4290   76.6690   89.7100
8  169.2200   70.1780   94.3310
9  170.6090   61.6120   97.9680
}\BWBern

\begin{figure}
\centering
\subfloat{
\begin{tikzpicture}
\begin{axis}[
	width=.36\textwidth,
	legend cell align=left,
	legend style={font=\tiny},
	legend image post style={scale=0.5},
	title={Performance (NPT nodal)},
	xlabel={Degree $N$},
	ylabel={TFLOPS/s},
	xmin=.5, xmax=9.5,
	ymin=0,ymax=2.500,
        ybar=2*\pgflinewidth,
    bar width=3pt,
	xtick={1,2,3,4,5,6,7,8,9},
	ytick={0,.5,1,1.5,2},
	legend pos=north west,
	ymajorgrids=true,
	grid style=dashed,
] 
\addplot table[x=N, y=V] from \GFLOPSNodal;
\addplot table[x=N, y=S] from \GFLOPSNodal;
\legend{Volume, Surface}
\end{axis}
\end{tikzpicture}
}
\subfloat{
\begin{tikzpicture}
\begin{axis}[
	width=.36\textwidth,
	legend cell align=left,
	legend style={font=\tiny},
	legend image post style={scale=0.5},
	title={Performance (EPT nodal)},
	xlabel={Degree $N$},
	xmin=.5, xmax=9.5,
	ymin=0,ymax=2.500,
        ybar=2*\pgflinewidth,
    bar width=2pt,
	xtick={1,2,3,4,5,6,7,8,9},
	ytick={0,.5,1,1.5,2},	
	legend pos=north west,
	ymajorgrids=true,
	grid style=dashed,
] 
\addplot table[x=N, y=V] from \GFLOPSBlockNodal;
\addplot table[x=N, y=S] from \GFLOPSBlockNodal;
\addplot table[x=N, y=F] from \GFLOPSBlockNodal;
\legend{Volume, Lift, Flux}

\end{axis}
\end{tikzpicture}
}
\subfloat{
\begin{tikzpicture}
\begin{axis}[
	width=.36\textwidth,
	legend cell align=left,
	legend style={font=\tiny},
	legend image post style={scale=0.5},
	title={Performance (Bernstein)},
	xlabel={Degree $N$},
	xmin=.5, xmax=9.5,
	ymin=0,ymax=2.5,
        ybar=2*\pgflinewidth,
    bar width=3pt,
	xtick={1,2,3,4,5,6,7,8,9},
	ytick={0,.5,1,1.5,2},	
	legend pos=north west,
	ymajorgrids=true,
	grid style=dashed,
] 
\addplot table[x=N, y=V] from \GFLOPSBern;
\addplot table[x=N, y=S] from \GFLOPSBern;
\addplot table[x=N, y=L] from \GFLOPSBern;
\legend{Volume, Surface, Surface (optimal)}
\end{axis}
\end{tikzpicture}
}
\caption{Achieved computational performance (TFLOPS/s) for volume and surface kernels using nodal and Bernstein-Bezier bases.}
\label{fig:gflops}
\end{figure}
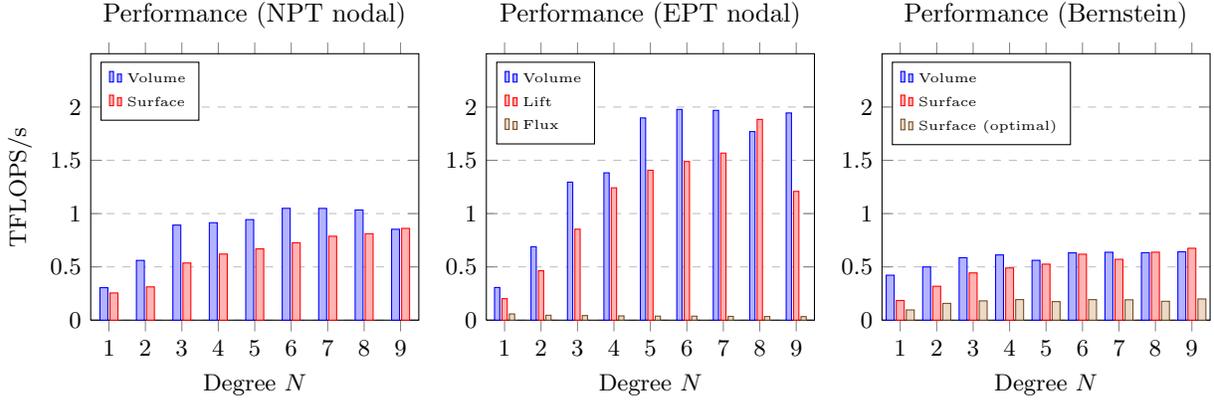


\begin{figure}
\centering
\subfloat{
\begin{tikzpicture}
\begin{axis}[
	width=.355\textwidth,
	legend cell align=left,
	legend style={font=\tiny},
	legend image post style={scale=0.5},	
	title={Bandwidth (NPT nodal)},
	xlabel={Degree $N$},
	ylabel={GB/s},
	xmin=.5, xmax=9.5,
	ymin=0,ymax=275,
        ybar=2*\pgflinewidth,
    bar width=3pt,
	xtick={1,2,3,4,5,6,7,8,9},
	legend pos=north east,
	ymajorgrids=true,
	grid style=dashed,
] 
\addplot table[x=N, y=V] from \BWNodal;
\addplot table[x=N, y=S] from \BWNodal;
\legend{Volume, Surface}

\end{axis}
\end{tikzpicture}
}
\subfloat{
\begin{tikzpicture}
\begin{axis}[
	width=.355\textwidth,
	legend cell align=left,
	legend style={font=\tiny},
	legend image post style={scale=0.5},
	title={Bandwidth (EPT nodal)},
	xlabel={Degree $N$},
	xmin=.5, xmax=9.5,
	ymin=0,ymax=275,
        ybar=2*\pgflinewidth,
    bar width=1.75pt,
	xtick={1,2,3,4,5,6,7,8,9},
	legend pos=north east,
	ymajorgrids=true,
	grid style=dashed,
] 
\addplot table[x=N, y=V] from \BWBlockNodal;
\addplot table[x=N, y=S] from \BWBlockNodal;
\addplot table[x=N, y=F] from \BWBlockNodal;
\legend{Volume, Lift, Flux}

\end{axis}
\end{tikzpicture}
}
\subfloat{
\begin{tikzpicture}
\begin{axis}[
	width=.355\textwidth,
	legend cell align=left,
	legend style={font=\tiny},
	legend image post style={scale=0.5},	
	title={Bandwidth (Bernstein)},
	xlabel={Degree $N$},
	xmin=.5, xmax=9.5,
	ymin=0,ymax=275,
        ybar=2*\pgflinewidth,
    bar width=3pt,
	xtick={1,2,3,4,5,6,7,8,9},
	legend pos=north east,
	ymajorgrids=true,
	grid style=dashed,
] 

\addplot table[x=N, y=V] from \BWBern;
\addplot table[x=N, y=S] from \BWBern;
\addplot table[x=N, y=L] from \BWBern;
\legend{Volume, Surface, Surface (optimal)}
\end{axis}
\end{tikzpicture}
}
\caption{Achieved bandwidth (GB/s) for volume and surface kernels using nodal and Bernstein-Bezier bases.}
\label{fig:bw}
\end{figure}
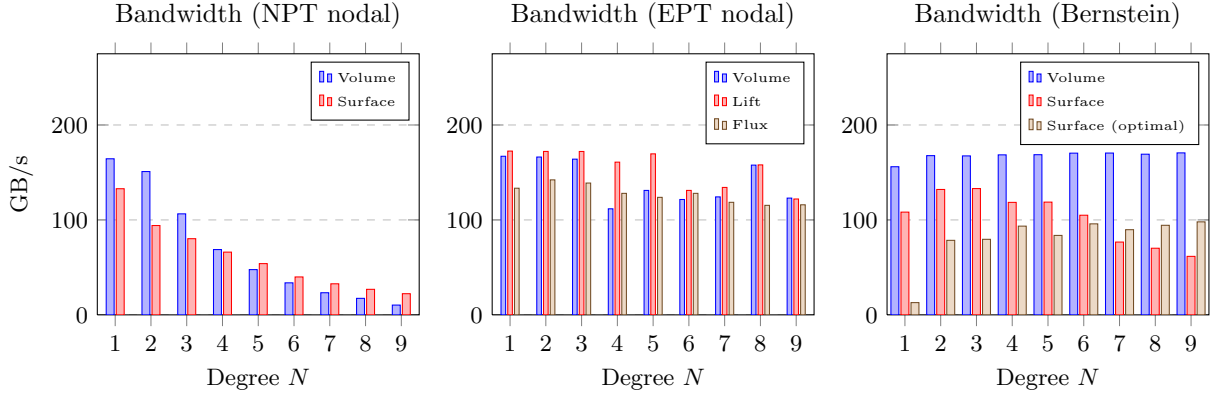


A convenient visualization of computational performance is the Roofline model \cite{williams2009roofline}, which sets an upper bound on the rate of floating point operations (FLOPS/s) based on the arithmetic intensity of a given kernel.  The arithmetic intensity is defined as the work done per unit of data loaded
\[
\text{\rm arithmetic intensity} = \frac{\text{\rm FLOPs performed}}{\text{\rm bytes loaded}}.  
\]
The roofline then produces an upper bound based on the theoretically \textit{attainable} performance, defined as
\[
\text{\rm attainable performance} = \min{\LRp{\text{\rm arithmetic intensity} \times \text{\rm peak bandwidth}, \text{\rm peak GFLOPS/s}}}.
\]
This bound indicates that the peak attainable performance increases with arithmetic intensity until peak computational performance is reached, at which point the roofline flattens out.  Plotting the achieved performance (GFLOPS per second) of each kernel against the arithmetic intensity (GFLOPS per GB of data loaded) results in a point which lies somewhere below the roofline bound.  The distance of each point to the roofline is indicative of performance ``ceilings'', which prevent peak performance due to the structure of the kernel.  

The two extremes of performance lie at the bottom left and top right of the roofline plot.  The bottom left of the roofline bound represents the bandwidth or data-bound regime, where a small number of computations are performed for each byte of data loaded.  The top right of the roofline (where the bound flattens out) indicates that a kernel is compute or FLOP-bound, and that a large number of computations are performed for each data load.  The approach towards kernel optimization depend on whether a kernel is bandwidth-bound or compute-bound.  

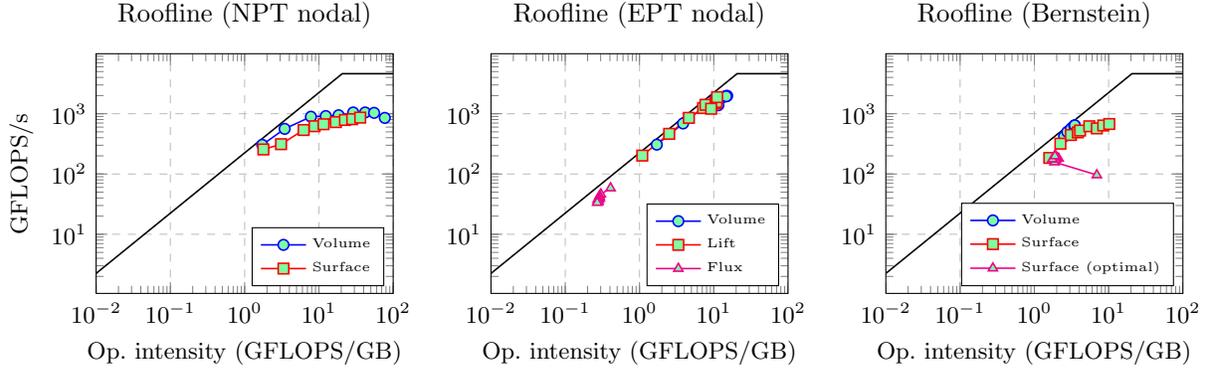
\begin{figure}
\centering
\subfloat{
\begin{tikzpicture}
\begin{loglogaxis}[
	legend cell align=left,
	legend style={font=\tiny},
	width=.335\textwidth,
    title={Roofline (NPT nodal)},
    xlabel={Op.\ intensity (GFLOPS/GB)},
    ylabel={GFLOPS/s},
    xmin=.01, xmax=100,
    legend pos=south east,
    xmajorgrids=true,
    ymajorgrids=true,
    grid style=dashed,
] 
\addplot+[color=blue,mark=*,mark options={fill=markercolor},semithick]
coordinates{(1.72666,304.8)(3.45915,560.7)(7.80756,891.9)(12.3847,914.5)(18.4345,943.3)(28.9758,1049.3)(41.9139,1048.7)(55.3697,1033.8)(77.1189,852.9)};
\addplot+[color=red,mark=square*,mark options={fill=markercolor},semithick]
coordinates{(1.78876,255.179)(3.09065,312.277)(6.2277,536.494)(8.74391,620.34)(11.5419,668.915)(16.9289,725.89)(22.4189,787.639)(28.0905,811.189)(35.9825,861.195)};
\addplot+[color=black,semithick,mark=none]
coordinates{(0.01,2.24)(20.5893,4612)(100,4612)};
\legend{Volume, Surface,}
\end{loglogaxis}
\end{tikzpicture}
}
\subfloat{
\begin{tikzpicture}
\begin{loglogaxis}[
	legend style={font=\tiny},
	legend cell align=left,
	width=.335\textwidth,
    title={Roofline (EPT nodal)},
    xlabel={Op.\ intensity (GFLOPS/GB)},
    xmin=.01, xmax=100,
    legend pos=south east,
    xmajorgrids=true,
    ymajorgrids=true,
    grid style=dashed,
] 
\addplot+[color=blue,mark=*,mark options={fill=markercolor},semithick]
coordinates{(1.71001,306.71)(3.86018,689.05)(7.35216,1294.7)(11.525,1382.4)(13.4836,1897.6)(15.1548,1977.4)(14.7517,1968.3)(10.4485,1769.5)(14.724,1944.3)};
\addplot+[color=red,mark=square*,mark options={fill=markercolor},semithick]
coordinates{(1.08977,201.86)(2.50909,463.74)(4.61984,853.66)(7.18954,1241.7)(7.72459,1406.6)(10.577,1488.7)(10.8768,1567.5)(11.1056,1883.6)(9.22081,1208.3)};
\addplot+[color=magenta,mark=triangle*,mark options={fill=markercolor},semithick]
coordinates{(0.407665,58.3849)(0.30195,46.1)(0.289641,43.1795)(0.292358,40.1741)(0.28497,37.8729)(0.272239,37.411)(0.278051,35.3907)(0.27915,34.575)(0.269161,33.5072)};

\addplot+[color=black,semithick,mark=none]
coordinates{(0.01,2.24)(20.5893,4612)(100,4612)};
\legend{Volume, Lift, Flux,}
\end{loglogaxis}
\end{tikzpicture}
}
\subfloat{
\begin{tikzpicture}
\begin{loglogaxis}[
	legend style={font=\tiny},
	legend cell align=left,
	width=.335\textwidth,
    title={Roofline (Bernstein)},
    xlabel={Op.\ intensity (GFLOPS/GB)},
    xmin=.01, xmax=100,
    legend pos=south east,
    xmajorgrids=true,
    ymajorgrids=true,
    grid style=dashed,
]
\addplot+[color=blue,mark=*,mark options={fill=markercolor},semithick]
coordinates{(2.51771,421.801)(2.77537,499.781)(3.25934,585.95)(3.3832,612.231)(3.10056,561.642)(3.45345,631.582)(3.48214,637.221)(3.48439,633.11)(3.5012,641.385)};

\addplot+[color=red,mark=square*,mark options={fill=markercolor},semithick]
coordinates{(1.58974,184.714)(2.24325,318.13)(3.10412,443.399)(3.85037,489.824)(4.12215,525.607)(5.49369,619.05)(6.93651,571.032)(8.46519,637.878)(10.1983,674.671)};

\addplot+[color=magenta,mark=triangle*,mark options={fill=markercolor},semithick]
coordinates{(6.93491,96.102)(1.86777,157.412)(2.11827,180.978)(1.92284,192.954)(1.93925,174.241)(1.8711,192.588)(1.98199,190.916)(1.74658,176.906)(1.89114,198.934)};

\addplot+[color=black,semithick,mark=none]
coordinates{(0.01,2.24)(20.5893,4612)(100,4612)};

\legend{Volume, Surface, Surface (optimal)}

\end{loglogaxis}
\end{tikzpicture}
}
\caption{Roofline performance analysis for nodal and Bernstein-Bezier bases for $N = 1,\ldots,9$.}
\label{fig:roofline}
\end{figure}



Figure~\ref{fig:roofline} presents roofline plots of the volume and surface kernels for nodal and Bernstein-Bezier bases at orders of approximation $N = 1,\ldots, 9$.  For nodal bases, the achieved computational performance of both the volume and surface kernel stalls at higher orders.   This is alleviated for the EPT nodal kernels, where manual management of fast memory greatly improves performance at higher $N$ relative to the NPT nodal kernels.  We note that, while the EPT nodal kernels perform the same number of operations as the nodal kernels, the operational intensity differs due to the fact that the recorded bandwidth is increased by loading some data redundantly.  

For the Bernstein-Bezier basis, the arithmetic intensity is decreased at higher polynomial degrees by sparsifying the derivative and lift operators.  As a result, the Bernstein derivative kernel delivers roughly the same computational performance and operational intensity independently of polynomial order.  The performance of the Bernstein surface kernel is similar to that of the nodal surface kernel, but with similar performance observed at lower operational intensities.  The optimal Bernstein surface kernel behaves similarly to the Bernstein derivative kernel, with the ratio of computational performance and operational intensity remaining relatively constant for $N > 1$.  The outlying roofline point for the optimal Bernstein surface kernel in Figure~\ref{fig:roofline} corresponds to $N=1$.  

Since the roofline provides a tighter bound on attainable performance than the peak performance (which is typically only achieved under very specific computational scenarios), we can also use the ratio between observed and attainable performance to measure the percentage of computational efficiency achieved.  As noted in \cite{axelGPU2015}, this percentage is equal to the ratio of achieved bandwidth to peak bandwidth for bandwidth-bound kernels.  Figure~\ref{fig:efficiency} shows the percentage of computational efficiency of volume and surface kernels under each basis.  The efficiency of the NPT nodal volume and surface kernels decreases to around $20\%$ at large $N$;  in comparison, the efficiency of the EPT nodal kernels maintain a higher efficiency around $60\%$ (with the exact percentage fluctuating depending on the order).  In comparison, the Bernstein volume kernel maintains a high percentage of efficiency between $75-80\%$ for the volume kernel.   The Bernstein surface kernel does appear to lose efficiency at higher orders; however, this degradation is delayed until around $N=5$, whereas the NPT nodal surface kernel shows efficiency loss for $N>1$.  In contrast, the efficiency of the optimal Bernstein surface kernel increases constantly with $N$, remaining around $40\%$ for all $N > 1$.  

\begin{figure}
\centering
\subfloat{
\begin{tikzpicture}
\begin{axis}[
 	legend cell align=left,
	legend style={font=\tiny},
	width=.33\textwidth,
	title={Comp.\ efficiency (NPT)},
	xlabel={Degree $N$},
	ylabel={\% of attainable perf.},
	xmin=.5, xmax=9.5,
	ymin=-10, ymax=150,
	ytick = {0,20,40,60,80,100},
	xtick={1,2,3,4,5,6,7,8,9},	
	yticklabel=\pgfmathparse{\tick}\pgfmathprintnumber{\pgfmathresult}\,\%,
	legend pos=north east,
	xmajorgrids=true,
	ymajorgrids=true,
	grid style=dashed,
] 
\addplot+[color=blue,mark=*,semithick,mark options={fill=markercolor}]
coordinates{(1,73.3937)(2,67.3929)(3,47.4955)(4,30.7009)(5,21.275)(6,22.7515)(7,22.7385)(8,22.4154)(9,18.4931)};
\addplot+[color=red,mark=square*,semithick,mark options={fill=markercolor}]
coordinates{(1,59.3125)(2,42.0089)(3,35.817)(4,29.4969)(5,24.096)(6,17.8277)(7,17.078)(8,17.5887)(9,18.6729)};
\legend{Volume, Surface}
\end{axis}
\end{tikzpicture}
}
\subfloat{
\begin{tikzpicture}
\begin{axis}[
 	legend cell align=left,
	legend style={font=\tiny},
	width=.33\textwidth,
	title={Comp.\ efficiency (EPT)},
	xlabel={Degree $N$},
	xmin=.5, xmax=9.5,
	ymin=-10, ymax=150,
	ytick = {0,20,40,60,80,100},
	xtick={1,2,3,4,5,6,7,8,9},	
	yticklabel=\pgfmathparse{\tick}\pgfmathprintnumber{\pgfmathresult}\,\%,
	legend pos=north east,
	xmajorgrids=true,
	ymajorgrids=true,
	grid style=dashed,
] 
\addplot+[color=blue,mark=*,semithick,mark options={fill=markercolor}]
coordinates{(1,74.5728)(2,74.2156)(3,73.2161)(4,49.8705)(5,58.5129)(6,54.2496)(7,55.4754)(8,70.4125)(9,54.9022)};
\addplot+[color=red,mark=square*,semithick,mark options={fill=markercolor}]
coordinates{(1,77.0134)(2,76.8442)(3,76.8263)(4,71.8071)(5,75.7089)(6,58.5188)(7,59.9183)(8,70.5179)(9,54.4826)};
\addplot+[color=magenta,mark=triangle*,semithick,mark options={fill=markercolor}]
coordinates{(1,59.5455)(2,63.4772)(3,61.9826)(4,57.1326)(5,55.2563)(6,57.1348)(7,52.9196)(8,51.4964)(9,51.758)};

\legend{Volume, Lift, Flux}
\end{axis}
\end{tikzpicture}
}
\subfloat{
\begin{tikzpicture}
\begin{axis}[
	legend cell align=left,
	legend style={font=\tiny},	
	width=.33\textwidth,
    title={Comp.\ efficiency (Bernstein)},
    xlabel={Degree $N$},
    xmin=.5, xmax=9.5,
    ymin=-10, ymax=150,
    ytick = {0,20,40,60,80,100},
	xtick={1,2,3,4,5,6,7,8,9},    
    yticklabel=\pgfmathparse{\tick}\pgfmathprintnumber{\pgfmathresult}\,\%,
 legend pos=north east,
    xmajorgrids=true,   
    ymajorgrids=true,    
    grid style=dashed,
] 

\addplot+[color=blue,mark=*,semithick,mark options={fill=markercolor}]
coordinates{(1,69.6554)(2,74.8705)(3,74.7451)(4,75.2384)(5,75.3134)(6,76.0375)(7,76.0844)(8,75.5446)(9,76.1647)};

\addplot+[color=red,mark=square*,semithick,mark options={fill=markercolor}]
coordinates{(1,48.3089)(2,58.9629)(3,59.3893)(4,52.892)(5,53.0138)(6,46.8504)(7,34.2272)(8,31.3295)(9,27.5054)};

\addplot+[color=magenta,mark=triangle*,semithick,mark options={fill=markercolor}]
coordinates{(1,5.76161)(2,35.0402)(3,35.5219)(4,41.7219)(5,37.3567)(6,42.7942)(7,40.0491)(8,42.1121)(9,43.7357)};

\legend{Volume, Surface, Surface (optimal)}

\end{axis}
\end{tikzpicture}
}
\caption{Computational efficiency (percentage of attainable performance based on the roofline bound) of volume and surface kernels for nodal and Bernstein-Bezier bases.}
\label{fig:efficiency}
\end{figure}
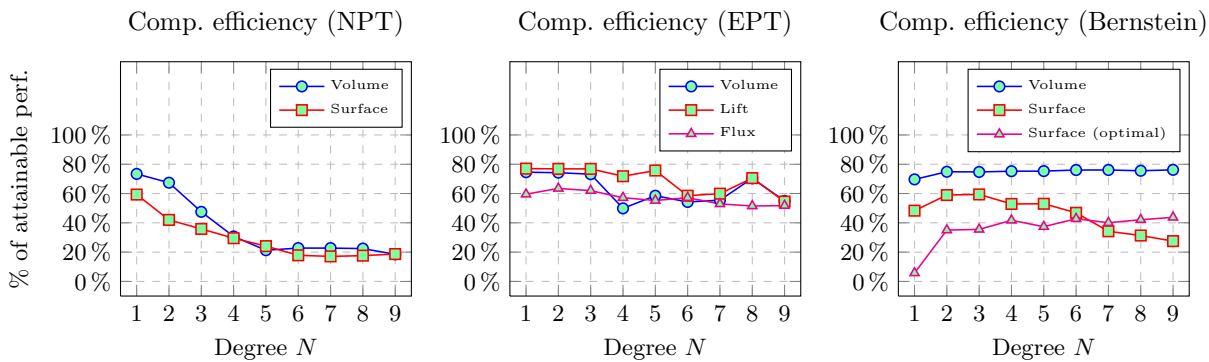



\section{Conclusions and future work}

We have presented a comparison of GPU-accelerated implementations of DG using node-per-thread (NPT) matrix-multiplication kernels, element-per-thread (EPT) blocked matrix-multiplication kernels, and specialized kernels which take advantage of properties of the Bernstein-Bezier basis.  The efficient application of sparse Bernstein derivative matrices is discussed, and a factorization of the lift matrix is presented which allows for both faster application and more numerically stable computations.  

Two methods for applying the Bernstein-Bezier lift matrix are also presented.   The first method takes advantage of the sparsity of the factorized lift matrix and is faster $N\leq 6$, while the second method results in asymptotically optimal complexity and improved performance at very high orders of approximation.  Computational results demonstrate the degradation of performance for straightforward implementations of the volume and surface kernels at high orders of approximation, and suggest that high order Bernstein-Bezier bases have the potential to provide an efficient alternative to high order nodal bases for DG time domain methods.  GPU-accelerated implementations of Bernstein-Bezier DG are observed to yield $2\times$ speedups over straightforward implementations of nodal DG at orders $N = 4,5$, which improves to a $5\times$ speedup at $N = 9$.  

We note that the Bernstein-Bezier basis achieves the largest speedup over the nodal basis at very high orders of approximation.  However, additional barriers exist which reduce the efficiency of very high order time-explicit DG methods, the most evident of which is the CFL condition.  It is well-known that, for a mesh size $h$ and polynomial degree $N$, the stable timestep restriction for upwind DG scales as
\[
dt \leq C \frac{h}{N^2},
\]
where $C$ is some constant which depends on the physical wavespeed.  Some advances have been made in addressing the restrictiveness of this condition.  Warburton and Hagstrom introduced an filter implicitly defined by transferring the solution to and from a dual grid \cite{warburton2008taming}.  Under such a filter, the stable timestep restriction can be shown to be improved to
\[
dt \leq C\frac{h}{N}.
\]
Reyna and Li showed that the central DG method results in a similar timestep restriction \cite{reyna2014operator}.  Xu, Chen, and Liu introduced additional conservation constraints (enforced through either Lagrange multipliers or penalization) based on propagation of the solution average forward in time \cite{xu2014new}.  The incorporation of such methods will also be necessary to alleviate the onerous timestep restriction at very high orders of approximation.  

The work of the first author (JC) was supported partially by TOTAL E\&P Research and Technology USA, contract number 20150023.  The second author (TW) was supported partially by ANL (award number 1F-32301, subcontract on DOE DE-AC02-06CH11357). Both JC and TW would like to acknowledge the support of NSF (award number DMS-1216674) in this research, as well as Mark Ainsworth and Rajesh Gandham for helpful discussions.  

\appendix

\section{Properties of Bernstein-Bezier operators}
\label{app:app}

Bernstein-Bezier polynomials in $d$ dimensions are typically described using a multi-index notation.  For example, for $d=3$, a basis function $B^N_{ijkl}(\lambda_1,\lambda_2,\lambda_3,\lambda_4)$ is notated as $B^N_{\bm{\alpha}}(\bm{\lambda})$, where $\bm{\alpha}, \bm{\lambda} \in \bm{R}^{d+1}$. In three dimensions, the multi-index $\bm{\alpha} = \LRp{\alpha_0,\ldots,\alpha_{d}}$ corresponds to the tuple of barycentric indices $(i,j,k,l)$ and $\bm{\lambda}$ corresponds to the vector of barycentric coordinates $\LRp{\lambda_1,\ldots,\lambda_4}$.  We introduce notation for the comparison and summation of multi-indices as well
\begin{align*}
\LRb{\bm{\alpha}}&\coloneqq \sum_{j=0}^d \alpha_j,\\
\bm{ \alpha} &\leq \bm{\beta} \quad \text{\rm if } \alpha_i \leq \beta_i, \quad \forall i = 0,\ldots, d.  
\end{align*}
Under this notation, Bernstein polynomials have a very compact dimension-independent representation in terms of barycentric coordinates.  

We will also refer to hierarchical orthogonal (modal) polynomials using a similar multi-index notation.  Let $\bm{r}\in \mbb{R}^d$ be the vector of coordinates on the reference tetrahedron $\widehat{D}$; then, we introduce a hierarchical orthogonal basis ${L}_{\bm{\gamma}}(\bm{r})$ with multi-index $\bm{\gamma}\in\mbb{R}^{d}$ such that 
\begin{align*}
\int_{\widehat{D}} {L}_{\bm{\nu}}(\bm{r}) {L}_{\bm{\gamma}}(\bm{r}) &= 0, \quad \bm{\nu} \neq \bm{\gamma}\\
{L}_{\bm{\gamma}}(\bm{r}) &\in P^N(\widehat{D}), \quad \LRb{\bm{\gamma}} \leq N.
\end{align*}
For convenience, we also use multi-index notation to refer to entries of matrices and vectors associated with Bernstein and modal polynomials.  

Bernstein polynomials yield two useful properties.  The first is that degree elevation is sparse; in other words, a Bernstein polynomial of degree $N-1$ may be represented using no more than $d+1$ Bernstein polynomials of degree $N$.  Degree elevation of a Bernstein polynomial $B^{N-1}_{\bm{\alpha}}$ can be written explicitly as
\[
B^{N-1}_{\bm{\alpha}} = \sum_{j=0}^d \frac{\alpha_j+1}{N}B^N_{\bm{\alpha} + \bm{e}_j}.
\]
where $\bm{e}_j$ is the $j$th canonical vector.  The second useful property of Bernstein polynomials is the sparsity of differentiation.   The derivative of a Bernstein polynomial $B^N_{\bm{\alpha}}$ with respect to a barycentric coordinate $\lambda_i$ is $NB^{N-1}_{\bm{\alpha}-\bm{e}_i}$.  For example, if $i=0$
\[
\pd{B^N_{\bm{\alpha}}}{\lambda_0}{} = \pd{}{\lambda_0}{}\frac{N!}{i!j!k!l!}\lambda_0^i\lambda_1^j\lambda_2^k\lambda_3^l = N\frac{(N-1)!}{(i-1)!j!k!l!} \lambda_0^{i-1}\lambda_1^j \lambda_2^k\lambda_3^l = NB^{N-1}_{\bm{\alpha}-\bm{e}_0}.
\]
These are used to show properties of derivative and lift operators for Bernstein polynomials.  

\subsection{Derivative operators}
\label{app:bb}

Combining degree elevation and differentiation formulas gives the sparsity of the barycentric derivative matrices $\bm{D}^0,\ldots, \bm{D}^3$ explicitly: 
\begin{lemma}
Assume $\bm{D}^i$ is the derivative operator w.r.t.\ $i$th barycentric coordinate, and that $\bm{\alpha},\bm{\beta}$ are multi-indices. For a fixed column $\bm{\beta}$, the non-zero row entries of $\bm{D}^i$ have indices $\bm{\alpha}$ such that
\[
\bm{D}^i_{\bm{\alpha},\bm{\beta}} = \alpha_j+1, \qquad \bm{\alpha} = \bm{\beta} - \bm{e}_i + \bm{e}_j, \qquad j = 0,\ldots,d
\]
For a fixed row $\alpha$, the non-zero column entries of $D^i$ have indices $\beta$ such that
\[
\bm{D}^i_{\bm{\alpha},\bm{\beta}} = \alpha_j, \qquad \bm{\beta} = \bm{\alpha} + \bm{e}_i - \bm{e}_j, \qquad j = 0,\ldots,d.
\]
\end{lemma}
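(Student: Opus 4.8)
The plan is to obtain $\bm{D}^i$ as the composition of the two operations already established in this appendix: formal differentiation with respect to $\lambda_i$, which sends a degree-$N$ Bernstein polynomial to a degree-$(N-1)$ one, followed by degree elevation, which re-expresses the result back in the degree-$N$ Bernstein basis. Concretely, I would fix a coefficient vector $\bm{p}$ with $p = \sum_{\bm{\beta}} \bm{p}_{\bm{\beta}} B^N_{\bm{\beta}}$, compute $\partial p/\partial\lambda_i$ in closed form, and read off the entries of $\bm{D}^i$ as the coefficients of this expansion once it is written in $\{B^N_{\bm{\alpha}}\}$. Before starting I would record the convention (used implicitly above and in the chain-rule step of the volume kernel) that $\partial/\partial\lambda_i$ denotes the formal partial derivative treating the barycentric coordinates as independent variables, so that the stated differentiation identity applies monomial by monomial and $\partial p/\partial\lambda_i$ is a genuine element of $P^{N-1}\subset P^N$ with a unique degree-$N$ Bernstein representation.

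The computation itself is two short steps. First, applying $\partial B^N_{\bm{\beta}}/\partial\lambda_i = N B^{N-1}_{\bm{\beta}-\bm{e}_i}$ termwise gives $\partial p/\partial\lambda_i = \sum_{\bm{\beta}} N \bm{p}_{\bm{\beta}} B^{N-1}_{\bm{\beta}-\bm{e}_i}$. Second, I substitute the degree elevation identity applied to $\bm{\gamma}=\bm{\beta}-\bm{e}_i$,
\[
B^{N-1}_{\bm{\beta}-\bm{e}_i} = \sum_{j=0}^d \frac{(\beta_j-\delta_{ij})+1}{N}\, B^N_{\bm{\beta}-\bm{e}_i+\bm{e}_j},
\]
so that the factors of $N$ cancel and $\partial p/\partial\lambda_i = \sum_{\bm{\beta}} \sum_{j=0}^d (\beta_j-\delta_{ij}+1)\, \bm{p}_{\bm{\beta}}\, B^N_{\bm{\beta}-\bm{e}_i+\bm{e}_j}$. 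Collecting the coefficient of $B^N_{\bm{\alpha}}$ with $\bm{\alpha}=\bm{\beta}-\bm{e}_i+\bm{e}_j$ identifies, for a fixed column $\bm{\beta}$, the candidate nonzero rows and their values $\beta_j-\delta_{ij}+1$; since $\alpha_j = \beta_j-\delta_{ij}+1$ under this shift, each value is simply $\alpha_j$. Inverting the relation to $\bm{\beta}=\bm{\alpha}+\bm{e}_i-\bm{e}_j$ and re-indexing then yields the row-wise description with the same value $\alpha_j$.

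The part requiring the most care is the bookkeeping of degenerate multi-indices, since the closed form is valid verbatim only when $\bm{\beta}-\bm{e}_i$ is itself a legitimate index, i.e.\ $\beta_i\ge 1$. When $\beta_i=0$ the polynomial $B^N_{\bm{\beta}}$ is independent of $\lambda_i$, so its derivative---and hence the entire $\bm{\beta}$ column of $\bm{D}^i$---must vanish; I would verify consistency by noting that the only shifted index $\bm{\beta}-\bm{e}_i+\bm{e}_j$ that stays inside the simplex in this case is the diagonal one ($j=i$), whose value is $\alpha_i=\beta_i=0$, while the remaining $d$ candidates acquire a negative $i$-th component and therefore correspond to no basis function. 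I would also check that the $d+1$ shifted indices are distinct (they differ in which coordinate is incremented), confirming the bound of at most $d+1$ nonzero entries per column asserted earlier. Finally, specializing to $d=3$ with $\bm{\alpha}=(i,j,k,l)$ reproduces the explicit entries of Lemma~\ref{lemma:lemma1} as a consistency check.
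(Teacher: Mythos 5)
Your proof is correct and follows essentially the same route as the paper's: apply the barycentric differentiation formula $\partial B^N_{\bm{\beta}}/\partial \lambda_i = N B^{N-1}_{\bm{\beta}-\bm{e}_i}$, then the one-degree elevation identity, and read off the coefficients of the resulting degree-$N$ expansion. If anything, your bookkeeping is more careful than the paper's one-display proof, which writes the coefficient uniformly as $(\alpha_j+1)$ and glosses over both the $j=i$ term (where the elevated index is $\bm{\alpha}-\bm{e}_i$, so the correct value is $\alpha_i$, matching the diagonal entries of Lemma~\ref{lemma:lemma1}) and the degenerate columns with $\beta_i=0$, both of which your $\delta_{ij}$ accounting and boundary-case check resolve correctly.
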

\begin{proof}
A column of $\bm{D}^i$ is characterized by the representation of the derivative of $B^N_{\bm{\alpha}}$ as a linear combination of Bernstein polynomials of the same degree.  By the above formulas for degree elevation and the barycentric derivative of $B^N_{\bm{\alpha}}$, 
\[
\pd{}{\lambda_i}{}B^N_{\bm{\alpha}} = N B^{N-1}_{\bm{\alpha}-\bm{e}_i} = \sum_{j=0}^d (\alpha_j+1)B^N_{\bm{\alpha} -\bm{e}_i + \bm{e}_j}
\]
which gives the row indices $\bm{\beta}$ for a given column $\bm{\alpha}$ of $\bm{D}^i$. The column indices $\bm{\beta}$ of a row with index $\bm{\alpha}$ are straightforward to determine based on this relation.  
\end{proof}

\subsection{Degree reduction operators}

We show here some useful properties of the degree reduction operator, which are used to derive the structure of the Bernstein lift matrix.  We will make use of the fact that a Bernstein mass matrix of degree $N$ in any dimension contains only $(N+1)$ distinct eigenvalues $\lambda^N_i$, whose explicit expressions are given in \cite{derriennic1985multivariate}.  The multiplicity of each eigenvalue corresponds to the dimension of the eigenspace, each of which consists of polynomials orthogonal to all polynomials of degree $i-1$ for $0< i \leq N$.  

\begin{lemma}
\label{lemma:elev}
Suppose $p \in P^N(\widehat{D})$.  Let $\bm{T}$ be the transformation matrix such that
\[
p = \sum_{\LRb{\bm{\gamma}}\leq N} c^L_{\bm{\gamma}} L_{\bm{\gamma}} = \sum_{\LRb{\bm{\alpha}}= N} c^B_{\bm{\alpha}} B^N_{\bm{\alpha}}, \qquad \bm{c}^B = \bm{T} \bm{c}^L
\]
where $L_{\bm{\gamma}}, B^N_{\bm{\alpha}}$ are modal and Bernstein polynomials, respectively.  Then, 
\[
\bm{T}_{N-i}^{-1}\LRp{\bm{E}_{N-i}^N}^T\bm{T}_{N} =  \bm{\tilde{D}}.
\]
Suppose  $0\leq k \leq N$, and let $\lambda^N_k, \lambda^{N-i}_k$ be the distinct eigenvalues of $\bm{M}_N$ and $\bm{M}_{N-i}$, respectively. The entries of $\tilde{\bm{D}}$ are given as
\[
\bm{\tilde{D}}_{\bm{\nu},\bm{\gamma}} = \begin{cases}
{\lambda_{\LRb{\bm{\gamma}}}^{N-i}}/{\lambda^N_{\LRb{\bm{\gamma}}}},  &\bm{\nu} = \bm{\gamma}\\
0  &\text{\rm otherwise}
\end{cases} \qquad \bm{\tilde{D}} \in \mbb{R}^{\LRp{N-i}_p,N_p}
\]
where $N_p, \LRp{N-i}_p$ are the dimensions of the space of polynomials of total degree $N$ and $(N-i)$, respectively.
\end{lemma}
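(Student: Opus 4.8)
The plan is to prove the matrix identity one column at a time, reading off the columns of $\bm{T}_N$ and $\bm{T}_{N-i}$ as the Bernstein control points of the modal polynomials $L_{\bm{\gamma}}$. Writing $\bm{t}_{\bm{\gamma}}$ for the column of $\bm{T}_N$ indexed by $\bm{\gamma}$, the relation $p = \sum_{\bm{\gamma}} c^L_{\bm{\gamma}} L_{\bm{\gamma}} = \sum_{\bm{\alpha}} c^B_{\bm{\alpha}} B^N_{\bm{\alpha}}$ with $\bm{c}^B = \bm{T}_N \bm{c}^L$ gives $L_{\bm{\gamma}} = \sum_{\bm{\alpha}} \LRp{\bm{T}_N}_{\bm{\alpha},\bm{\gamma}} B^N_{\bm{\alpha}}$, so $\bm{t}_{\bm{\gamma}}$ is exactly the degree-$N$ control-point vector of $L_{\bm{\gamma}}$. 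Since $\tilde{\bm{D}} = \bm{T}_{N-i}^{-1}\LRp{\bm{E}_{N-i}^N}^T \bm{T}_N$, it suffices to evaluate $\LRp{\bm{E}_{N-i}^N}^T \bm{t}_{\bm{\gamma}}$ for each $\bm{\gamma}$ and show it is a scalar multiple of the corresponding column of $\bm{T}_{N-i}$ (or vanishes).

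The two facts I would lean on are the eigenstructure of the Bernstein mass matrix and the meaning of degree elevation. First, because $L_{\bm{\gamma}}$ has total degree $\LRb{\bm{\gamma}}$ and is orthogonal to every polynomial of strictly lower degree, its control-point vector lies in the eigenspace of $\bm{M}_N$ for $\lambda^N_{\LRb{\bm{\gamma}}}$; hence $\bm{M}_N \bm{t}_{\bm{\gamma}} = \lambda^N_{\LRb{\bm{\gamma}}} \bm{t}_{\bm{\gamma}}$, and I may replace $\bm{t}_{\bm{\gamma}}$ by $\LRp{\lambda^N_{\LRb{\bm{\gamma}}}}^{-1}\bm{M}_N \bm{t}_{\bm{\gamma}}$. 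A direct expansion gives $\LRp{\bm{M}_N \bm{t}_{\bm{\gamma}}}_{\bm{\alpha}} = \int_{\widehat{D}} B^N_{\bm{\alpha}} L_{\bm{\gamma}}$. Second, the column of $\bm{E}_{N-i}^N$ indexed by $\bm{\mu}$ holds the degree-$N$ control points of the lower-degree polynomial $B^{N-i}_{\bm{\mu}}$, that is $B^{N-i}_{\bm{\mu}} = \sum_{\bm{\alpha}} \LRp{\bm{E}_{N-i}^N}_{\bm{\alpha},\bm{\mu}} B^N_{\bm{\alpha}}$. Combining these collapses the sum over $\bm{\alpha}$ into a single mixed inner product, $\LRp{\LRp{\bm{E}_{N-i}^N}^T \bm{M}_N \bm{t}_{\bm{\gamma}}}_{\bm{\mu}} = \int_{\widehat{D}} B^{N-i}_{\bm{\mu}} L_{\bm{\gamma}}$.

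It then remains to split on the degree of $\bm{\gamma}$. If $\LRb{\bm{\gamma}} > N-i$, then $L_{\bm{\gamma}}$ is orthogonal to all of $P^N$ of degree $\le N-i$, in particular to each $B^{N-i}_{\bm{\mu}}$, so the entire column vanishes and $\tilde{\bm{D}}$ has a zero column there. If $\LRb{\bm{\gamma}} \le N-i$, then $L_{\bm{\gamma}}$ is a genuine degree-$(N-i)$ modal function with control-point vector $\LRp{\bm{T}_{N-i}}_{:,\bm{\gamma}}$, and the same mass-matrix expansion at level $N-i$ yields $\int_{\widehat{D}} B^{N-i}_{\bm{\mu}} L_{\bm{\gamma}} = \lambda^{N-i}_{\LRb{\bm{\gamma}}} \LRp{\bm{T}_{N-i}}_{\bm{\mu},\bm{\gamma}}$. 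Dividing by $\lambda^N_{\LRb{\bm{\gamma}}}$ gives $\LRp{\bm{E}_{N-i}^N}^T \bm{t}_{\bm{\gamma}} = \LRp{\lambda^{N-i}_{\LRb{\bm{\gamma}}}/\lambda^N_{\LRb{\bm{\gamma}}}} \LRp{\bm{T}_{N-i}}_{:,\bm{\gamma}}$, and left-multiplying by $\bm{T}_{N-i}^{-1}$ turns the right-hand side into $\LRp{\lambda^{N-i}_{\LRb{\bm{\gamma}}}/\lambda^N_{\LRb{\bm{\gamma}}}}\bm{e}_{\bm{\gamma}}$, which is precisely the claimed diagonal form of $\tilde{\bm{D}}$.

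The only non-elementary ingredient is the eigenvalue property $\bm{M}_N \bm{t}_{\bm{\gamma}} = \lambda^N_{\LRb{\bm{\gamma}}} \bm{t}_{\bm{\gamma}}$, i.e.\ that the Bernstein mass matrix is diagonalized by the modal polynomials with eigenvalues depending only on total degree; for this I would invoke the eigenstructure of \cite{derriennic1985multivariate}, taking care to use its precise form, namely that the eigenspace for $\lambda^N_k$ is spanned by the degree-\emph{exactly}-$k$ modal functions rather than all of those orthogonal to $P^{k-1}$. This is the step where I expect the real work to sit. Everything else is bookkeeping, and it is worth emphasizing that the explicit values of $\lambda^N_k$ are never needed: only the ratio $\lambda^{N-i}_{\LRb{\bm{\gamma}}}/\lambda^N_{\LRb{\bm{\gamma}}}$ appears, and it survives the computation automatically.
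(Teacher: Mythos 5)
Your proposal is correct and is essentially the paper's own argument: both hinge on the duality identity $\LRp{\bm{E}^N_{N-i}}^T\bm{M}_{N,N} = \bm{M}_{N-i,N}$ (which you recover column-wise through the mixed inner products $\int_{\widehat{D}} B^{N-i}_{\bm{\mu}} L_{\bm{\gamma}}$) combined with the Derriennic eigenstructure, i.e.\ that the modal control vectors diagonalize $\bm{M}_N$ and render the mixed mass matrix rectangular-diagonal, with eigenvalues depending only on total degree. The paper executes this as a single matrix conjugation, $\bm{T}_{N-i}^{-1}\LRp{\bm{E}_{N-i}^N}^T\bm{T}_N = \LRp{\bm{T}_{N-i}^{-1}\bm{M}_{N-i,N}\bm{T}_N}\LRp{\bm{T}_N^{-1}\bm{M}_N^{-1}\bm{T}_N}$, whereas you unpack the same computation one column at a time, so the content is identical.
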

\begin{proof}
The degree elevation operator satisfies
\[
\LRp{\bm{E}^N_{N-i}}^T \bm{M}_{N,N} = \bm{M}_{N-i,N},
\]
where we have dropped the dimension-dependent superscripts on $\bm{M}_{N,N}^d$ for clarity.  Rearrangement and multiplying by changes of basis from each side gives
\[
\bm{T}_{N-i}^{-1} \LRp{\bm{E}_{N-i}^N}^T \bm{T}_N = \LRp{\bm{T}_{N-i}^{-1}\bm{M}_{N-i,N} \bm{T}_N} \LRp{\bm{T}_N^{-1}\bm{M}_N^{-1}\bm{T}_N}.
\]
Since $L_{\bm{\gamma}}$ are orthogonal with respect to the $L^2$ inner product induced by the mass matrix, $\bm{M}_N$ is a diagonal matrix under the change of basis $\bm{T}_{N}$, with entries 
\[
\LRp{\bm{T}_{N}^{-1}\bm{M}^{-1}_{N,N} \bm{T}_N}_{\bm{\gamma},\bm{\gamma}} = 1/\lambda^N_{\LRb{\bm{\gamma}}}.  
\]
where $\lambda^N_{\LRb{\bm{\gamma}}}$ is the $\LRb{\bm{\gamma}}$th distinct eigenvalue of $M_{N,N}$.  By similar orthogonality arguments, $\LRp{\bm{T}_{N-i}^{-1}\bm{M}_{N-i,N} \bm{T}_N}$ is a rectangular matrix with entries
\[
\LRp{\bm{T}_{N-i}^{-1}\bm{M}_{N-i,N} \bm{T}_N}_{\bm{\nu},\bm{\gamma}} = \begin{cases}
0, & \bm{\nu} \neq \bm{\gamma}\\
\lambda^{N-i}_{\LRb{\bm{\gamma}}}, &\bm{\nu} = \bm{\gamma}
\end{cases}
\]
\end{proof}
This result implies that, while degree reduction does not preserve modes exactly, it preserves separation between the modes.  This is simplest to see in 1D, where transforming the degree reduction operator to a modal basis results in a rectangular diagonal matrix with more columns than rows.  Straightforward computations give the following corollary:
\begin{corollary}
Under a transformation to a modal (orthogonal polynomial) basis, $\bm{E}^N_{N-i}\LRp{\bm{E}^N_{N-i}}^T$ is diagonal, with entries
\[
\LRp{\bm{T}^{-1}_N\bm{E}^N_{N-i}\LRp{\bm{E}^N_{N-i}}^T\bm{T}_N}_{\bm{\gamma},\bm{\gamma}} = \begin{cases}
0,  & \LRb{\bm{\gamma}} > (N-i),\\
\lambda^{N-i}_{\LRb{\bm{\gamma}}}/\lambda^N_{\LRb{\bm{\gamma}}}, &\LRb{\bm{\gamma}} \leq (N-i).
\end{cases}
\] 
\end{corollary}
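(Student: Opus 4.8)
The plan is to reduce the corollary to Lemma~\ref{lemma:elev} by inserting a well-chosen identity between the two elevation factors. Writing $\bm{T}_{N-i}\bm{T}_{N-i}^{-1} = \bm{I}$ gives
\[
\bm{T}^{-1}_N\bm{E}^N_{N-i}\LRp{\bm{E}^N_{N-i}}^T\bm{T}_N = \LRp{\bm{T}^{-1}_N\bm{E}^N_{N-i}\bm{T}_{N-i}}\LRp{\bm{T}_{N-i}^{-1}\LRp{\bm{E}^N_{N-i}}^T\bm{T}_N}.
\]
By Lemma~\ref{lemma:elev} the second factor is exactly $\bm{\tilde{D}}$, the rectangular matrix that is diagonal in the modal indexing, carrying the entries $\lambda^{N-i}_{\LRb{\bm{\gamma}}}/\lambda^N_{\LRb{\bm{\gamma}}}$ for $\bm{\nu}=\bm{\gamma}$ and zero otherwise. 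It therefore remains only to identify the first factor and carry out the product.

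First I would argue that $\bm{T}^{-1}_N\bm{E}^N_{N-i}\bm{T}_{N-i}$ is the matrix of the natural inclusion $P^{N-i}(\widehat{D})\hookrightarrow P^N(\widehat{D})$ expressed in the modal basis. Reading the composition from right to left, $\bm{T}_{N-i}$ converts degree-$(N-i)$ modal coefficients into degree-$(N-i)$ Bernstein coefficients; $\bm{E}^N_{N-i}$ degree-elevates, which leaves the underlying polynomial unchanged and returns its degree-$N$ Bernstein coefficients; and $\bm{T}^{-1}_N$ converts back to degree-$N$ modal coefficients. Since the orthogonal basis is hierarchical, a polynomial in $P^{N-i}$ has the same modal coefficients on the modes $\LRb{\bm{\gamma}}\leq N-i$ whether it is viewed in $P^{N-i}$ or in $P^N$, and vanishing coefficients on the new modes $\LRb{\bm{\gamma}}>N-i$. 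Hence $\bm{T}^{-1}_N\bm{E}^N_{N-i}\bm{T}_{N-i}$ is the rectangular identity $\bm{I}_{N,N-i}\in\mbb{R}^{N_p,\LRp{N-i}_p}$ with $\LRp{\bm{I}_{N,N-i}}_{\bm{\mu},\bm{\nu}} = \delta_{\bm{\mu}\bm{\nu}}$ for $\LRb{\bm{\nu}}\leq N-i$ and zero otherwise.

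Finally I would multiply $\bm{I}_{N,N-i}\bm{\tilde{D}}$ entrywise. Because $\bm{\tilde{D}}$ is supported only on indices $\bm{\nu}=\bm{\gamma}$ with $\LRb{\bm{\gamma}}\leq N-i$, and $\bm{I}_{N,N-i}$ merely copies those rows into row $\bm{\gamma}$ while annihilating everything indexed by $\LRb{\bm{\gamma}}>N-i$, the product is diagonal with entry $\lambda^{N-i}_{\LRb{\bm{\gamma}}}/\lambda^N_{\LRb{\bm{\gamma}}}$ when $\LRb{\bm{\gamma}}\leq N-i$ and $0$ otherwise, which is precisely the claim. I expect the only delicate point to be the justification of the inclusion claim in the second step, namely that degree elevation composed with the two changes of basis acts as the identity on the shared low-order modes; once the hierarchical structure of $\LRc{L_{\bm{\gamma}}}$ is invoked, everything else is bookkeeping inherited from Lemma~\ref{lemma:elev}.
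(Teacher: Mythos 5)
Your proposal is correct and supplies exactly the ``straightforward computations'' the paper leaves implicit after Lemma~\ref{lemma:elev}: inserting $\bm{T}_{N-i}\bm{T}_{N-i}^{-1}$ makes the right factor the matrix $\bm{\tilde{D}}$ of that lemma, while the left factor $\bm{T}^{-1}_N\bm{E}^N_{N-i}\bm{T}_{N-i}$ is the rectangular identity precisely because degree elevation leaves the underlying polynomial unchanged and the basis $L_{\bm{\gamma}}$ is hierarchical, so the product is the stated diagonal. This is the same route the paper intends, and the argument is complete as written.
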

In other words, the composition of degree reduction and elevation results only in the scaling or truncation of individual polynomial modes.  

\subsection{Lift matrices}
\label{app:lift}

We observe that the Bernstein-Bezier lift matrix for a single face $f$ on a $d$-dimensional simplex is of the form
\[
\bm{L} = \bm{M}^{-1}\bm{M}_f = \LRs{
\begin{array}{c}
\ell_0 \bm{L}_0\\
\ell_1 \LRp{\bm{E}^N_{N-1}}^T\bm{L}_0\\
\vdots\\
\ell_N \LRp{\bm{E}^N_{0}}^T\bm{L}_0
\end{array},
}
\]
where $\bm{E}_{N-i}^N$ is the degree elevation operator from degree $N-i$ polynomials on a $(d-1)$ dimensional simplex, and 
\[
\bm{L}_0 = \frac{(N+1)^2}{2} \LRp{\bm{E}^{N+1}_N}^T \bm{E}^{N+1}_N
\]
and the constants $\ell_1,\ldots,\ell_N$ have the explicit representation
\[
\ell_j = (-1)^j \frac{\binom{N}{j}}{1+j}.
\]
Coincidentally, $\ell_j$ are also the coefficients of the Bernstein-Bezier representation of the one-dimensional Lagrange polynomial $\ell_{\rm GRJ}$ which is unity at $r = -1$ and zero at the remaining $N$ GRJ-$(0,0)$ quadrature points.  These polynomials relate strongly to $hp$ finite element trace constants on simplices, which are derived as extremal eigenvalues of a generalized eigenvalue problem.  The corresponding extremal eigenfunction is constructed using $\ell_{\rm GRJ}$ \cite{warburton2003constants}.



We also observe connections between $\bm{L}_0$ and $hp$ trace constants for the face of a $d$-dimensional simplex.  These constants are related to the lift matrix through the generalized eigenvalue problem involving the face mass matrix $\bm{M}_f$, the mass matrix $\bm{M}$.  
\begin{lemma}
The eigenvalues of $\bm{L}_0$ are scalings of the non-zero eigenvalues $\lambda_i$ of the generalized eigenvalue problem
\[
\bm{M}_f\bm{u} = \lambda_i \bm{M} \bm{u}.
\]
\end{lemma}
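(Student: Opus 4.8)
The plan is to show that the generalized eigenvalue problem is equivalent to an eigenvalue problem for the lift matrix, and then to collapse that problem onto the $N^f_p \times N^f_p$ matrix $\bm{L}_0$ using the factorization $\bm{L}^f = \bm{E}_L^f \bm{L}_0$ recorded above. Since $\bm{M}$ is symmetric positive definite, $\bm{M}_f \bm{u} = \lambda_i \bm{M}\bm{u}$ is equivalent to the standard eigenvalue problem $\bm{M}^{-1}\bm{M}_f \bm{u} = \lambda_i \bm{u}$; because $\bm{M}_f$ is supported only on a single face and has rank $N^f_p$, exactly $N^f_p$ of these eigenvalues are non-zero, and these are the $\lambda_i$ in the statement.

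The first step is to factor the face mass matrix through the face degrees of freedom. Using the trace property of the Bernstein--Bezier basis --- the restriction of a volume basis function to a face is either zero or the associated $(d-1)$-dimensional degree $N$ face Bernstein polynomial --- I would write $\bm{M}_f = \bm{M}^f \bm{R}$, where $\bm{R} \in \mbb{R}^{N^f_p \times N_p}$ extracts the face degrees of freedom and $\bm{M}^f \in \mbb{R}^{N_p \times N^f_p}$ is the face mass matrix acting on face coefficients (the operator in $\bm{L}^f = \bm{M}^{-1}\bm{M}^f$). Hence $\bm{M}^{-1}\bm{M}_f = \bm{L}^f \bm{R}$. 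The second step uses that $\bm{L}^f \bm{R}$ and $\bm{R}\bm{L}^f$ share the same non-zero eigenvalues with multiplicity, which reduces the $N_p \times N_p$ problem to the $N^f_p \times N^f_p$ matrix $\bm{R}\bm{L}^f$. Since $\bm{R}\bm{L}^f$ selects the leading $N^f_p$ rows of $\bm{L}^f$, and the factorization $\bm{L}^f = \bm{E}_L^f \bm{L}_0$ has $\bm{E}_L^f$ with leading block equal to the identity (so that $\bm{R}\bm{E}_L^f = \bm{I}$), I obtain $\bm{R}\bm{L}^f = \bm{L}_0$. Thus the non-zero eigenvalues $\lambda_i$ coincide with the eigenvalues of $\bm{L}_0$, exactly on the reference element and up to the ratio $J^f/J^k$ carried by the physical mass matrices, which is the scaling in the statement.

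To exhibit these eigenvalues explicitly --- and to confirm that they are genuine positive scalings of the $\lambda_i$ --- I would diagonalize $\bm{L}_0 = \frac{(N+1)^2}{2}\LRp{\bm{E}^{N+1}_N}^T \bm{E}^{N+1}_N$ in the hierarchical modal basis. Applying Lemma~\ref{lemma:elev} to the degree reduction $\LRp{\bm{E}^{N+1}_N}^T$ (with the roles $N \mapsto N+1$ and $i \mapsto 1$), and noting that in the modal basis the elevation $\bm{E}^{N+1}_N$ is the trivial inclusion that appends zero coefficients for the new degree-$(N+1)$ modes, the conjugated product $\bm{T}_N^{-1}\bm{L}_0\bm{T}_N$ becomes diagonal with entries $\frac{(N+1)^2}{2}\,\lambda^N_{\LRb{\bm{\gamma}}}/\lambda^{N+1}_{\LRb{\bm{\gamma}}}$, where $\lambda^N_k$ and $\lambda^{N+1}_k$ are the distinct eigenvalues of the face Bernstein mass matrices of degrees $N$ and $N+1$. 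This realizes the eigenvalues of $\bm{L}_0$ as scalings of ratios of face mass matrix eigenvalues, in the same spirit as the corollary to Lemma~\ref{lemma:elev}.

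The main obstacle is the mixed-dimensional character of the problem: $\bm{M}$ lives on the $d$-simplex while $\bm{M}_f$ is supported on a $(d-1)$-face, so no single modal transform simultaneously diagonalizes both and a direct spectral computation is awkward. The crux that circumvents this is the identity $\bm{R}\bm{L}^f = \bm{L}_0$, which leans on the already-established lift factorization and on the identity leading block of $\bm{E}_L^f$; the remaining care is bookkeeping the ranks, i.e.\ confirming that $\bm{L}_0$ is nonsingular so that its $N^f_p$ eigenvalues account for precisely the $N^f_p$ non-zero generalized eigenvalues and that no spurious zeros are introduced by the passage from $\bm{L}^f\bm{R}$ to $\bm{R}\bm{L}^f$.
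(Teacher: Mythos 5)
Your proof is correct, but it takes a genuinely different route from the paper's. The paper proves the lemma by computing both spectra in closed form and comparing formulas: it quotes the explicit generalized eigenvalues $\lambda_i = (N+i+d)(N+1-i)/2$ from the trace-constant literature \cite{warburton2003constants}, diagonalizes $\bm{L}_0 = \frac{(N+1)^2}{2}\LRp{\bm{E}^{N+1}_N}^T\bm{E}^{N+1}_N$ in the modal basis using Lemma~\ref{lemma:elev} together with the known distinct eigenvalues $\lambda^N_i$ of the Bernstein mass matrix \cite{derriennic1985multivariate, kirby2015efficient, farouki2003construction}, and then verifies that the ratio $\lambda^N_i/\lambda^{N+1}_i$ reduces to $(N+i+d)(N+1-i)/(N+1)^2$, so the match between the two spectra is observed at the level of explicit expressions. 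Your first two steps replace this formula-matching with a structural argument: the factorization $\bm{M}_f = \bm{M}^f\bm{R}$ via the Bernstein trace property, the standard fact that $\bm{L}^f\bm{R}$ and $\bm{R}\bm{L}^f$ share their nonzero spectrum with multiplicity, and the identity $\bm{R}\bm{L}^f = \bm{L}_0$ coming from the identity leading block of $\bm{E}_L^f$ --- all sound, since the paper's ordering convention makes $\bm{R} = \LRp{\bm{I} \middle| \bm{0}}$ and nonsingularity of $\bm{L}_0$ follows from $\bm{E}^{N+1}_N$ having full column rank (or from the positivity of the eigenvalues you compute in your final step, which reproduces the paper's modal diagonalization). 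Your route buys a proof of the spectral identification that requires no closed-form eigenvalue expressions and no external trace-constant formulas, and it handles multiplicities automatically; the paper's route buys the explicit values $\frac{(N+i+d)(N+1-i)}{2}$ and the connection to $hp$ trace constants. One dependency worth flagging: you lean on the factorization $\bm{L}^f = \bm{E}_L^f\bm{L}_0$, which the paper only asserts as an observation rather than proving in detail, and the paper's own proof of this lemma is deliberately independent of that factorization --- the lemma there serves partly as corroborating evidence for it --- so your argument is conditional on the factorization in a way the paper's is not.
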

\begin{proof}
Expressions for the generalized eigenvalues, 
\[
\lambda_i = \frac{(N+i+d)(N+1-i)}{2}, \qquad i = 0,\ldots, N,
\]
can be explicitly derived for a $d$-dimensional simplex from expressions in \cite{warburton2003constants} for $\bm{M}_f$ under a modal basis. 

Kirby \cite{kirby2015efficient}, Farouki \cite{farouki2003construction}, and Derrenic \cite{derriennic1985multivariate} derive that the unique eigenvalues of the degree $N$ reference Bernstein mass matrix in $d$ dimensions are 
\[
\lambda^N_i = \LRb{\widehat{D}} \frac{(N!)^2d!}{(N+i+d)!(N-i)!}.
\]
Recall that, for $n \leq N$, under the transformation $\bm{T}_N$ to a hierarchical orthogonal (modal) polynomial basis,
\[
\bm{T}_N \LRp{E^N_i}^T {\bm{T}_n}^{-1} = {\rm diag}\LRp{\frac{\lambda^i_0}{\lambda^N_0},\ldots,\frac{\lambda^i_i}{\lambda^N_{i}},0,\ldots,0}.
\]
This implies that $\bm{L}_0 = \frac{(N+1)^2}{2} \LRp{\bm{E}^{N+1}_N}^T \bm{E}^{N+1}_N$ has eigenvalues
\[
\lambda(\bm{L}_0) = \frac{(N+1)^2}{2} \LRp{\frac{\lambda_0^N}{\lambda^{N+1}_0}}, \ldots, \frac{(N+1)^2}{2} \LRp{\frac{\lambda_N^N}{\lambda^{N+1}_N}}
\]
where $\lambda^N_i$ are eigenvalues for degree $N$ Bernstein mass matrices in dimension $d-1$.  The proof is completed by noting that the formula for $\frac{\lambda_i^N}{\lambda^{N+1}_i}$ reduces to 
\[
\frac{\lambda_i^N}{\lambda^{N+1}_i} = \frac{(N+i+d)(N+1-i)}{(N+1)^2}.
\]

\end{proof}


\bibliographystyle{unsrt}
\bibliography{ipdg}

\end{document}